\documentclass[11pt,reqno]{amsart}
\usepackage{amsthm,amssymb,amsmath}
\usepackage{textcomp}
\usepackage{xcolor}
\usepackage{enumitem}
\usepackage[colorlinks=true, pdfstartview=FitV, linkcolor=blue, citecolor=blue, urlcolor=blue,pagebackref=false]{hyperref}
\usepackage{ulem}
\usepackage{esint}
\usepackage{makecell}

\usepackage[T1]{fontenc}
\usepackage{mlmodern}

\usepackage{fix-cm}

\newtheorem{theorem}{Theorem}
\newtheorem*{theorem*}{Theorem}
\newtheorem{lemma}{Lemma}[section]

\newtheorem{proposition}{Proposition}
\newtheorem{claim}{ Claim}

\newtheorem*{definition*}{\bf Definition}

\theoremstyle{definition}

\newtheorem{example}{\bf Example}
\newtheorem*{examples}{\bf Examples}
\newtheorem{remark}{Remark}

\newtheorem*{remark*}{Remark}

\newtheorem*{example*}{\bf Example}

\newcommand{\loc}{{\rm loc}}

\numberwithin{equation}{section}

\begin{document}

\fontsize{10.4pt}{4.4mm}\selectfont

\title{Non-local SDEs, critical drifts and local blow-ups}

\author{Damir Kinzebulatov and Chengjun Yue}

\address{Universit\'{e} Laval, D\'{e}partement de math\'{e}matiques et de statistique, Qu\'{e}bec, QC, Canada}

\email{damir.kinzebulatov@mat.ulaval.ca}

\email{chengjun.yue.1@ulaval.ca}

\thanks{D.K. is supported by NSERC grant (RGPIN-2024-04236), C.\,Y.\,is supported by the ISM}

\begin{abstract}
The paper is concerned with  $\alpha$-stable SDEs with singular time-inhomogeneous general drift. Our drifts satisfy a condition that is close to the minimal possible scaling-invariance and can introduce local blow-ups of the type arising in some particle systems with strong attracting interactions. 
\end{abstract}

\keywords{Stochastic differential equations, non-local operators, singular drifts}

\subjclass[2010]{60G52, 47D07 (primary), 60J75 (secondary)}

\maketitle

\setcounter{tocdepth}{1}

\tableofcontents

\section{Introduction} This paper is concerned with weak solution theory for SDEs with singular drift:
\begin{equation}
\label{sde1}
X_t = x - \int_0^t b(s,X_s) ds+ Z_t, \quad t\ge 0,
\end{equation}
where $Z_t$ is an isotropic $\alpha$-stable process in $\mathbb R^d$ ($d \geq 2$) generated by the fractional Laplacian $(-\Delta)^{\frac{\alpha}{2}}$, $1<\alpha<2$ (the case $\alpha=2$ is included as well, then $Z_t$ is replaced by Brownian motion, but our focus is on $\alpha<2$). The drift $b:\mathbb R^{1+d} \rightarrow \mathbb R^d$ is a Borel measurable vector field satisfying a close to the minimal possible condition of invariance with respect to the diffusive scaling, cf.\,\eqref{b_s_cond}.
We consider drifts that can be singular enough to introduce local in space blow-up effects due to a strong attraction to a point or a submanifold, and can have critical singularities in time. Our drifts are general, i.e.\,they are not assumed to have any particular structure such as a gradient form or divergence of appropriate sign. 

SDEs with attracting singularities in the drift arise in a number of physical models, including the SDE behind the non-classical Feynman-Kac formula for the 2D $\delta$-Bose gas \cite{C} and the nonlinear SDE for the Keller-Segel model of chemotaxis, see \cite{FJ} for $\alpha=2$, and \cite{E} for $\alpha<2$. Another class of models where SDEs with singular (non-attracting) drift appear is related to the passive tracer model in hydrodynamics where the drift is the velocity field obtained by solving 3D Navier-Stokes equations \cite{ZZ}. 
It is well known that the non-locality of the diffusion operator $(-\Delta)^{\frac{\alpha}{2}} + b \cdot \nabla$ behind SDE \eqref{sde1} makes the analysis of \eqref{sde1} and of the corresponding Kolmogorov equations easier in some aspects (e.g.\,the heat kernel of the fractional Laplacian vanishes at infinity only polynomially, which allows for some scaling arguments that do not exist when $\alpha=2$, cf.\,\cite{JW}) and considerably more difficult in some other aspects. We refer e.g.\,to \cite{CV,KSS, MM,S} for extensions of De Giorgi's and Nash's methods to non-local PDEs.
The literature on non-local PDEs and stable-like diffusion processes \cite{CF} is motivated, in particular, by physical models involving anomalous diffusions \cite{KRS}. A part of it is the literature on non-local ($=$ stable-driven) SDEs, which we will discuss in a moment.

Our main assumption on $b$ is the following Rellich-type condition:
\vspace*{2mm}
\begin{align}
&\text{$b \cdot \nabla$ is strongly bounded by $\lambda + \partial_t+(-\Delta)^{\frac{\alpha}{2}}$} \notag \\[2mm]
& \qquad \; \text{in $L^p\big(\mathbb R^{1+d},(1+|b|)^{-p+1}dtdx\big)$, \quad $p>d+1$,} \notag \\[2mm]
& \qquad \; \text{with sufficiently small bound, for some constant $\lambda>0$}, \label{hyp1}
\end{align}

\noindent where the role of the weight $(1+|b|)^{-p+1}$ is to desingularize the drift; see \eqref{main_est}-\eqref{half2} for the complete form of \eqref{hyp1}.  
This condition seems to be rather well suited for handling particle systems with attracting interactions that are strong enough to introduce local blow-up effects: as the strength of attraction between the particles crosses a certain critical threshold, particles start to collide in finite time a.s.\,and the particle system ceases to have a global-in-time weak solution (hence the assumption of smallness in \eqref{hyp1}).

An elementary sufficient condition for \eqref{hyp1} (i.e.\,\eqref{main_est}-\eqref{half2}) is the following parabolic Morrey class condition:
\begin{equation}
\label{b_s_cond}
\| |b|^{\frac{1}{\alpha-1}} \|_{E_{1+\epsilon}} \text{ is sufficiently small}.
\end{equation}

\begin{definition*}
    A function $v\in L_{loc}^{1+\epsilon}(\mathbb{R}^{1+d})$, $\epsilon>0$, is said to be in the Morrey class $E_{1+\epsilon}=E_{1+\epsilon}^\alpha$ if it has finite Morrey norm
    $$
    \|v\|_{E_{1+\epsilon}} :=
    \sup_{\rho>0, (t,x)\in \mathbb{R}^{1+d}}
    \rho\left(
    \frac{1}{\rho^{d+\alpha} }\int_{C^\alpha_\rho(t,x)} |v(s,y)|^{1+\epsilon} dsdy
    \right)^{\frac{1}{1+\epsilon}}<\infty,
    $$
    where the supremum is taken over all $\alpha$-parabolic cylinders $C^\alpha_\rho(t,x):=\{(s,y) \in \mathbb{R}^{1+d} \mid t \leq s \leq t+\rho^{\alpha}, \,|y-x| \leq \rho\}.$
    \end{definition*}

	 See Proposition \ref{prop1} for details. It refines some results of Adams and  Krylov, and is of interest on its own.

\begin{examples}

1.~Condition \eqref{b_s_cond} includes e.g.\,$$b(x)=\pm \kappa \frac{x}{|x|^\alpha}, \quad \kappa>0$$ in which case $\| |b|^{\frac{1}{\alpha-1}} \|^{\alpha-1}_{E_{1+\epsilon}}=c(d,\epsilon)\kappa$. This drift introduces strong attraction to the origin in \eqref{sde1} ($+$) or strong repulsion ($-$).

2.~More generally, \eqref{b_s_cond} includes $|b| \in L^{\frac{d}{\alpha-1},\infty}$ (weak Lebesgue class), in particular, a drift with everywhere dense singular locus
\begin{equation}
\label{b_ser}
b(t,x)=\sum_{z_i \in \mathbb Q^d}\kappa_i(t,x) \frac{x-z_i}{|x-z_i|^\alpha} \quad \text{provided }\sum_i \|\kappa_i\|_{L^\infty(\mathbb R^{1+d})}<\infty.
\end{equation}

3.~Another example is the following many-particle drift: $b:\mathbb R^{dN} \rightarrow \mathbb R^{dN}$, $b=(b^i)_{i=1}^N$,
$$
b^i(x):=\frac{1}{N}\sum_{j=1, j \neq i}^N \frac{x^i-x^j}{|x^i-x^j|^\alpha}, \quad x=(x^1,\dots,x^N),
$$
(see the proof of Theorem \ref{thm3} for details).
This drift has singularities along the collision hyperplanes $x^i=x^j$ and is not anywhere close to $L^{\frac{dN}{\alpha-1}}(\mathbb R^{dN})$.

4.~Another example is given by a time-inhomogeneous Borel measurable vector field $b$ satisfying, for some fixed $T>0$, 
$$|b(t,x)|\leq \mathbf{1}_{[-T,T]}|t-t_0|^{-\frac{\alpha-1}{2}} \quad \text{ for all $(t,x) \in \mathbb R^{1+d}$. }
$$

The previous examples can be combined, i.e.\,we can add them up, multiply by constants or by functions in $L^\infty$. 
\end{examples}

We can also verify \eqref{hyp1} directly by means of various fractional Hardy inequalities \cite{FLS, KPS}, which provide  control over what ``sufficiently small'' in \eqref{hyp1}, \eqref{b_s_cond} is.

\medskip

Our result, stated briefly, is as follows (for detailed statements see Theorems \ref{thm1}, \ref{thm2}). The part (\textit{i}) also imposes a global $L^1$ integrability condition on the unbounded part of $|b|$.

\begin{theorem*}
If the unbounded part of the drift $b$ satisfies \eqref{b_s_cond}, then the following are true:
\begin{enumerate}
\item[{\rm (\textit{i})}]
For every $x \in \mathbb R^d$, SDE \eqref{sde1} has a conditionally unique weak solution in a large class. These weak solutions constitute a strong Markov family. In fact, they determine a strongly continuous Feller propagator.

\medskip 

\item[{\rm (\textit{ii})}]
There is a detailed Sobolev regularity theory of the  Kolmogorov backward and forward equations corresponding to SDE \eqref{sde1} that, in particular, yields the uniqueness of the law of $X_t$  among laws having a sufficiently regular density satisfying the Kolmogorov forward equation in the weak sense. 
\end{enumerate}

\end{theorem*}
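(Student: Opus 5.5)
The plan is to derive both parts from a priori estimates in the weighted space $L^p\big(\mathbb R^{1+d},(1+|b|)^{-p+1}dtdx\big)$, $p>d+1$. The first task is to turn \eqref{b_s_cond} into the operator bound \eqref{hyp1}; this is the content of Proposition \ref{prop1}, which I would prove first. Set $\Lambda=\lambda+\partial_t+(-\Delta)^{\frac{\alpha}{2}}$. Write $b=b_1+b_2$, with $b_2$ bounded and $b_1$ the unbounded part. For $b_1$ I would use an Adams-type trace inequality. The kernel of $\nabla\Lambda^{-1}$ is bounded by $c\,(t-s)^{-\frac{1}{\alpha}}$ times the $\alpha$-stable heat kernel, so it is dominated by a parabolic Riesz potential of order $\alpha-1$. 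A Fefferman--Phong/Kerman--Sawyer argument then shows that the Morrey norm of $|b_1|^{\frac{1}{\alpha-1}}$ in $E_{1+\epsilon}$ controls $\||b_1|\,|\nabla \Lambda^{-1}f|\|$ in the weighted $L^p$ norm. The weight $(1+|b|)^{-p+1}$ absorbs the factor $|b|^{p-1}$ that would otherwise destroy integrability. The bounded part $b_2$ is handled by taking $\lambda$ large.

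Next I would construct the solution operator. Mollify $b$ to get bounded smooth $b_n$ satisfying \eqref{hyp1} uniformly in $n$. For these, the Neumann series $\Lambda^{-1}\sum_k(-b_n\cdot\nabla\Lambda^{-1})^k$ converges in the weighted $L^p$ space because the bound is small. This gives estimates on $u_n=(\lambda+\partial_t+(-\Delta)^{\frac{\alpha}{2}}+b_n\cdot\nabla)^{-1}f$ that are uniform in $n$:
\[
\|\,|b_n|^{1-\frac{1}{p}}\,|\nabla u_n|\,\|_{L^p}\le C\|f\|.
\]
Since $p>d+1$, a parabolic Sobolev embedding turns this into uniform H\"older continuity of $u_n$ in $(t,x)$. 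A second ingredient is an estimate for the difference $u_n-u_m$, obtained by the same resolvent identity; it gives convergence in sup-norm on compacts. Arzel\`a--Ascoli then produces the Feller propagator $P^{t,s}$ and its strong continuity. The global $L^1$ condition on $b_1$ in (\textit{i}) is used to control the tails uniformly in $n$, so that convergence holds on all of $\mathbb R^d$ rather than only locally.

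For (\textit{i}), the martingale problems for $b_n$ have laws $\mathbb P^n_x$. I would prove tightness via the Krylov-type estimate
\[
\mathbb E\int_s^t |b_n(r,X_r)|\,dr\le C(t-s)^{\gamma},
\]
which follows from the same weighted resolvent bounds applied to $f=|b_n|$, and then pass to a limit. The limit is a weak solution, since the drift integral converges by the same Krylov estimate. It is also Feller and strong Markov, because its finite-dimensional distributions are given by $P^{t,s}$. Conditional uniqueness holds in the class of weak solutions that are limits of such approximations, or that satisfy the Krylov estimate: for these, It\^o's formula applied to $u$ is justified and identifies the one-dimensional laws. For (\textit{ii}), the backward equation theory is exactly the regularity of $u$ above. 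The forward equation is handled by duality: any sufficiently regular density solving it weakly is tested against $u$, and integration by parts, justified by the weighted gradient bound, gives uniqueness.

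The main obstacle is the trace inequality in Proposition \ref{prop1}, i.e.\ getting a bound small in the Morrey norm without losing the critical exponent. I would first try the Fefferman--Phong route with a parabolic Hedberg maximal function estimate. I also expect delicacy in the H\"older estimate for $u_n$ near the singular set of $b$, where the weight degenerates.
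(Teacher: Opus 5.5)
The gap is in the passage from the Neumann series to continuity of $u_n$. Convergence of $\sum_k(-b_n\cdot\nabla\Lambda^{-1})^k f$ in the $b_n$-weighted space is exactly what \eqref{hyp1}/\eqref{main_est} provides. It controls $|b_n|^{\frac{1}{p}}\nabla u_n$ in $L^p$; the weight produces the power $\frac1p$, not $1-\frac1p$. It says nothing about $u_n$ where $b_n$ is small, so by itself it gives no parabolic Sobolev regularity, and there is nothing for an embedding to act on.

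Since $b_n\cdot\nabla u_n=|b_n|^{\frac{1}{p'}}(b_n^{\frac{1}{p}}\cdot\nabla u_n)$, what you implicitly need is that $\Lambda^{-1}$ maps $|b|^{\frac{1}{p'}}L^p$ into a parabolic Bessel potential space of order greater than $2(1+\frac{d}{\alpha})\frac{1}{p}$. That is a separate estimate: the dual half \eqref{adams_est2} on $\Lambda^{(-1+\frac{1}{\alpha})\frac{1}{p'}}|b|^{\frac{1}{p'}}$. Combined with \eqref{adams_est1} and the $L^p$-boundedness of $\nabla\Lambda^{-\frac{1}{\alpha}}$, it gives the factorized series \eqref{u_repr0} and $u\in\mathbb{W}_\alpha^{\frac{2}{\alpha}+\frac{\alpha-1}{\alpha}\frac{2}{p},p}$. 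The order $\frac{2}{\alpha}+\frac{\alpha-1}{\alpha}\frac{2}{p}$ exceeds $2(1+\frac{d}{\alpha})\frac{1}{p}$ exactly when $p>d+1$. That is where the threshold comes from, and your plan invokes it without any mechanism producing it.

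An Adams/Fefferman--Phong proof of your first step naturally passes through \eqref{adams_est1} and \eqref{adams_est2}, so you would have them in hand, but you never use them where they are indispensable. Alternatively, staying in your framework, $|\Lambda^{-1}(b_n\cdot\nabla u_n)(z)|\le\|k(z,\cdot)|b_n|^{\frac{1}{p'}}\|_{p'}\,\||b_n|^{\frac{1}{p}}\nabla u_n\|_p$, with $k$ the kernel of $\Lambda^{-1}$. The first factor is bounded uniformly in $z$ and $n$ when $p>d+1$, because $\int_{C_\rho}|b_{\mathfrak s}|\le C\rho^{d+1}\||b_{\mathfrak s}|^{\frac{1}{\alpha-1}}\|_{E_{1+\epsilon}}^{\alpha-1}$. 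Either way, this estimate has to be stated and proved.

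Convergence has the same defect: $L^p((1+|b_n|)^{1-p})$ changes with $n$, so ``the same resolvent identity'' for $u_n-u_m$ is not yet an argument. The paper works in unweighted $L^p$, where $Q_p(b_n)\to Q_p(b)$ and $R_p(b_n)\to R_p(b)$ strongly by dominated convergence. Hence $u_n\to u$ in $\mathbb{W}_\alpha^{\frac{2}{\alpha}+\frac{\alpha-1}{\alpha}\frac{2}{p},p}$ and uniformly on $\mathbb{R}^{1+d}$, with no Arzel\`a--Ascoli step. The Feller propagator, your Krylov bound (a sup bound for the solution with right-hand side $|b_n|\mathbf{1}_{[s,t]}$) and tightness all rest on this sup-norm control, so the gap is not cosmetic.

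The rest of your plan is workable but differs from the paper. The paper does not pass to the limit in martingale problems. It takes $\mathbb{P}_{s,x}$ from the limiting Feller propagator and transfers \eqref{krylov_bd} to $\mathbb{P}_{s,x}$ by weak convergence, Fatou and a null-set argument. It then identifies $Z$ by showing that $\mathbb{E}_{r,x}[e^{i\xi\cdot(\omega_t+\int_r^t b(u,\omega_u)du)}]$ and $e^{i\xi\cdot x-(t-r)|\xi|^\alpha}$ both solve \eqref{eq_nu}, whose bounded solution is unique by a contraction driven by the Krylov bound. Your tightness route also needs that Krylov bound under the limit measure, to control $\int|b-b_m|$.

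The $L^1$ hypothesis is not what gives convergence on all of $\mathbb{R}^d$; the unweighted theory already does that. It enters through the localized estimates of Proposition \ref{prop1_weight} in $L^p_\rho$. These are needed because the Krylov bound and the characteristic-function argument involve non-decaying data: $h\in C_b$, $e^{i\xi\cdot x}$, and the bounded part of $b$.

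Your duality argument for the forward equation is sound. The backward solution of the $L^{p'}$ theory has exactly the $2-(1-\frac{1}{\alpha})\frac{2}{p}$ derivatives in $L^{p'}$ required of test functions in \eqref{weak_def3}. The paper instead tests the equation for the difference $\eta-\eta_\star$ of two solutions with $\varphi=(\lambda-\partial_t+(-\Delta)^{\frac{\alpha}{2}})^{-1+(1-\frac{1}{\alpha})\frac{1}{p}}(g|g|^{p-2})$, where $g:=(\lambda+\partial_t+(-\Delta)^{\frac{\alpha}{2}})^{(1-\frac{1}{\alpha})\frac{1}{p}}(\eta-\eta_\star)$. It then uses $\|H_p\|_{p\to p}\|G_p\|_{p\to p}<1$, which avoids the backward problem altogether.
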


\begin{example}[Counterexample to weak existence]
\label{counter_ex}
The smallness condition in \eqref{b_s_cond} is necessary. Namely, consider SDE \eqref{sde1} with the following drift that pushes the solution $X_t$ towards the origin:
	$$
	b(x)=\kappa \frac{x}{|x|^{\alpha}},\qquad \kappa>0.
	$$
	Fix some $\beta\in(1,\alpha)$ and put $$c_{d,\alpha,\beta}:=-2^\alpha \frac{\Gamma(\frac{\alpha-\beta}{2})\Gamma(\frac{d+\beta}{2})}{\Gamma(-\frac{\beta}{2})\Gamma(\frac{d+\beta-\alpha}{2})}>0.$$ This constant is fixed by the identity
	$$
	-(-\Delta)^{\frac{\alpha}{2}} |x|^\beta = c_{d,\alpha,\beta}|x|^{\beta-\alpha}
$$
that follows e.g.\,from the L\'{e}vy representation of the fractional Laplacian \eqref{levy} and the rotational invariance of the latter and of $|x|^\beta$.
	If the strength of attraction towards the origin satisfies
\begin{equation}
\label{kappa_too_large}
	\kappa>\frac{c_{d,\alpha,\beta}}{\beta},
\end{equation}
	then the SDE \eqref{sde1}
	has no weak solution started from $x=0$. Indeed, supposing that there is a weak solution, and applying formally It\^{o}'s formula \eqref{ito} to $|X_t|^\beta$, one obtains
	\begin{align}
\mathbb E |X_{t\wedge\tau_R\wedge\sigma_m}|^\beta
&=
\mathbb E\int_0^{t\wedge\tau_R\wedge\sigma_m}
(-(-\Delta)^{\frac{\alpha}{2}} |X_s|^\beta-b(X_s)\cdot \nabla |X_s|^\beta)\,ds\notag\\
&=\mathbb E\int_0^{t\wedge\tau_R\wedge\sigma_m}
(c_{d,\alpha,\beta}-\kappa\beta\big)|X_s|^{\beta-\alpha}\,ds, 	\label{ito_cor}
\end{align}
where $\tau_R$, $\sigma_m$ are stopping times needed to address the growth of $|x|^\beta$ at infinity, see details in Appendix \ref{counter_app}. 
Now, if \eqref{kappa_too_large} holds, the left-hand side \eqref{ito_cor} is positive and the right-hand side is negative, which leads to a contradiction. 
This argument, as it is written, is formal since $|x|^\beta$ is continuous but is not smooth at the origin, so a justification of \eqref{ito_cor} is required (Appendix \ref{counter_app}).
\end{example}

SDE \eqref{sde1} can be viewed as a model (``two-particle'') case of the interacting system
\begin{equation}
\label{syst1}
X_t^i=X_0^i-\frac{1}{N}\sum_{j=1, j \neq i}^N \int_0^t K(X_s^i-X_s^j)ds+Z_t^i,
\end{equation}
where $Z_t^i$ are independent $\alpha$-stable processes. We can reverse this relation and embed \eqref{syst1} in the class of our SDEs in the case $\alpha=2$, and also, to some extent, in the case $1<\alpha<2$ (Section \ref{particle_sect}). In the Brownian case $\alpha=2$, $d=2$, $K(x)=\kappa x|x|^{-2}\mathbf{1}_{x \neq 0}$, particle system \eqref{syst1} becomes the finite-particle approximation of the celebrated Keller-Segel model of chemotaxis. Some sharp and  detailed results are obtained in \cite{FJ,FT} by exploiting the special form of the interaction kernel. Our condition on the drift provides us with flexible means for modifying the former, i.e.\,we can multiply the interaction kernel by a function having $L^\infty$ norm less than or equal to one and still have \eqref{b_s_cond} satisfied and, more generally, \eqref{hyp1}. Regarding time-inhomogeneous drifts, the dynamics of a typical particle in \eqref{syst1} as  $N \rightarrow \infty$ is described by the McKean-Vlasov SDE
 \begin{equation}
\label{mv}
X_t=X_0-\int_0^t \big(K \star \eta(s)\big)(X_s)ds+Z_t,
\end{equation}
where $\eta(t,\cdot)$ is the density of the law of $X_t$ and $\star$ is the convolution in the spatial variables \cite{CJM}. Despite the regularizing effect of the convolution, this time-inhomogeneous drift is still highly singular near $t=0$ if e.g.\,${\rm Law}X_0=\delta_x$. However, if time-homogeneous interaction kernel $K$ satisfies \eqref{b_s_cond}, then the convolution with $\eta$ also satisfies \eqref{b_s_cond} (Section \ref{particle_sect}).

To our knowledge, the present paper and the elliptic weakly form-bounded theory  \cite{KM}  are the first to treat $\alpha$-stable SDEs with \textit{general}\footnote{That is, without necessarily  profiting from the special structure of the drift, e.g.\,the regularizing effect of the convolution in distribution-dependent drifts and the regularity of the density of the initial distribution.} drifts that can be singular enough to reach  local blow-up phenomena.

In the Brownian case $\alpha=2$, more is known; see discussion of the existing literature below.

\subsection{Literature}
The following is a brief survey of the literature on non-local (stable-driven) SDEs with irregular drift.

\smallskip

1.~For bounded drifts $|b| \in L^\infty$,  the weak existence and uniqueness in law for $1<\alpha<2$ are due to  Komatsu \cite{K}.

2. Portenko \cite{P} and Podolynny-Portenko \cite{PP} attained a sharp condition on the Lebesgue scale up to the strict inequality:
\begin{equation}
\label{PP_cond}
|b| \in L^q(\mathbb R^d)+L^\infty(\mathbb R^d), \quad q>\frac{d}{\alpha-1}.
\end{equation}
The time-inhomogeneous case,
\begin{align*}
	|b|\in L^r(\mathbb R, L^q(\mathbb{R}^d)) + L^\infty(\mathbb R^{1+d}), \quad
	\frac{d}{q}+\frac{\alpha}{r}<\alpha-1,
\end{align*}
was handled in \cite{J2}, see also the  recent developments in \cite{FJM}.

3.~Kim-Song \cite{KS} and Chen-Wang \cite{CW} considered Kato class drifts, i.e.
$$
\lim_{r\to 0}\sup_{x\in \mathbb{R}^d}
\int_{B_r(x)} \frac{|b(y)|}{|x-y|^{d+1-\alpha}} dy=0 \quad (\text{in many results it suffices to require $\leq \delta$ for small $\delta$}),
$$
proved the existence of a weak solution and, importantly, its uniqueness in law, and constructed a Feller semigroup, strengthening \cite{P,PP}; \cite{KS} also consider measure-valued Kato class drifts.
We emphasize that, for every $\varepsilon>0$, there exist Kato class drifts $b$ such that $|b|^{\frac{1}{\alpha-1}} \not \in L_{\loc}^{1+\varepsilon}(\mathbb R^d)$, i.e.\,situated far from \eqref{PP_cond}. 
The Kato class condition can be restated in an equivalent form: 
\begin{equation}
\label{dual_form}
\|(\lambda+(-\Delta)^{\frac{\alpha}{2}})^{-\frac{\alpha-1}{\alpha}}|b|\|_\infty \leq \delta \quad \bigg(\text{ or, by duality, } \quad \||b|(\lambda+(-\Delta)^{\frac{\alpha}{2}})^{-\frac{\alpha-1}{\alpha}}\|_{L^1(\mathbb R^d) \rightarrow L^1(\mathbb R^d)} \leq \delta\bigg)
\end{equation}
(the proof can be found in \cite{Si}).
Kato class drifts arise in physical applications that are different from the ones that motivate the present work, such as diffusions on fractals; in particular, they do not introduce local blow-ups.
The Kato class is essentially the largest class of general drifts for which one has the standard two-sided heat kernel bounds\footnote{Given $a(z)$, $b(z) \geq 0$, we write $a(z) \approx b(z)$ if $c^{-1}b(z)\leq a(z) \leq cb(z)$ for some constant $c>1$ and all admissible $z$.
} of $\Lambda=(-\Delta)^{\frac{\alpha}{2}} + b \cdot \nabla$:
\begin{equation}
\label{heat_ker}
e^{-t\Lambda}(x,y) \approx e^{-t(-\Delta)^{\frac{\alpha}{2}}}(x-y),
\end{equation}
see Bogdan-Jakubowski \cite{BJ}; these two-sided bounds are used in the proofs in \cite{KS, CW}. In our setting,  \eqref{heat_ker} is no longer valid, cf.\,\eqref{nash}. Still, we use in our proof of Theorem \ref{thm1} some of the probabilistic arguments of \cite{CW, P,PP}.

In \cite{KM}, the authors considered SDE \eqref{sde1} with weakly form-bounded drift $b=b(x)$:
\begin{equation}
\label{wfb}
\||b|^{\frac{1}{2}}(\lambda+(-\Delta)^{\frac{\alpha}{2}})^{-\frac{1}{2}\frac{\alpha-1}{\alpha}}\|_{L^2(\mathbb R^d) \rightarrow L^2(\mathbb R^d)} \leq \sqrt{\delta}.
\end{equation}
 This class contains the Kato class, the weak Lebesgue class $|b| \in L^{\frac{d}{\alpha-1},\infty}(\mathbb R^d)$ and, more generally, $|b|^{\frac{1}{\alpha-1}}$ having sufficiently small elliptic Morrey norm
\begin{equation}
\label{morrey_e}
\||b|^{\frac{1}{\alpha-1}}\|_{M_{1+\epsilon}}:=\sup_{x \in \mathbb R^d, r>0}r \biggl(\frac{1}{r^d}\int_{B_r(x)}|b(y)|^{\frac{1+\epsilon}{\alpha-1}} dy\biggr)^{\frac{1}{1+\epsilon}},
\end{equation}
plus a bounded part of the drift. This class thus reaches drifts introducing local blow-up effects and destroying the standard heat kernel bounds \eqref{heat_ker}. Namely, \cite{KSS} extended some aspects of Nash's method  to handle $\Lambda=(-\Delta)^{\frac{\alpha}{2}} + b \cdot \nabla$ with a critical attracting (weakly form-bounded) drift $b(x)= \kappa x|x|^{-\alpha}$, $\kappa \in ]0,\kappa_\star[$, and established the following two-sided heat kernel bounds:
\begin{align}
\label{nash}
e^{-t\Lambda}(x,y) \approx e^{-t(-\Delta)^{\frac{\alpha}{2}}}(x-y)( t^{-\frac{1}{\alpha}}|y| \wedge 1)^{-d+\gamma}
\end{align}
for all $t>0, x, y \in \mathbb R^d, y \neq 0$. The exponent $\gamma \in ]\alpha,d]$ is uniquely determined by 
$$\Lambda^*|\cdot|^{-d+\gamma}=0 \quad \text{($\Lambda^*$ is the formal adjoint of $\Lambda$)}.$$
It is seen that already for this drift the heat kernel of $\Lambda$ is singular at $y=0$. For more general drifts \eqref{b_ser} such explicit bounds are out of reach since the constants already in a bound of type \eqref{nash} would depend on the distance between the singularities of the drift.

Concerning non-Morrey weakly form-bounded drifts, we refer to Chang-Wilson-Wolff \cite{CWW} who improve the Morrey condition by some logarithmic-type factors (strictly speaking, here we would have to speak of a closely related class of form-bounded drifts for $\alpha=2$ (Table \ref{tab1}), but it seems like the appropriate extension of \cite{CWW} is possible with a few additional efforts). At the same time, the $L^2$ setting of \eqref{wfb}  makes it convenient to e.g.\,apply Fourier transform to extend this class to other operators arising in some particle systems, see Theorem \ref{thm3}. 

The approach of \cite{KM} is, however, limited  to the elliptic setting (or so it seems at the moment) and therefore to time-homogeneous drifts $b=b(x)$ since it uses the Stroock-Varopoulos inequalities for symmetric Markov generators. In the present paper we pursue an alternative to the argument in \cite{KM}, strengthen rather substantially the results in \cite{KM} and also simplify some of the proofs there (Section \ref{loc_sect}).

4.~There is a rich literature on the  stable-driven SDE \eqref{sde1} with distributional drifts, in particular, divergence-free drifts in the Bessel or Besov scale. We refer to Athreya-Butkovsky-Mytnik \cite{ABM}, Chaudru de Raynal-Menozzi \cite{CM}, Fitoussi \cite{F} and Kremp-Perkowski \cite{KrP}. These results are somewhat orthogonal to what we do in the present paper, i.e.\,handling attraction phenomena.  One interesting question that appears in relation to distributional drifts is how to define the product of two distributions $b \cdot \nabla u$ in the Kolmogorov PDE or how to interpret $\int_0^t b(s,X_s) ds$ in the SDE, see cited papers.

It is also interesting to note that one of the entry points to our analysis of the Kolmogorov PDE for \eqref{sde1}, i.e.\,the fractional Hardy inequality 
\begin{equation}
\label{hardy_frac}
H_{d,\alpha}\int_{\mathbb R^d} |x|^{-\alpha}\varphi^2 dx \leq \int_{\mathbb R^d}|(-\Delta)^{\frac{\alpha}{4}} \varphi|^2 dx \quad  \text{$\forall\,\varphi \in C_c^\infty(\mathbb R^d)$},
\end{equation}
 can be obtained using the machinery of Besov spaces,  see \cite{BCG}, moreover, they obtained a refined Hardy inequality having the additional feature of invariance under oscillations. However, the other Besov class distributions that satisfy the Hardy inequality obtained in this way must have sufficiently regular distributional derivatives, while in the present paper we are interested in conditions that involve only the drift itself (e.g.\,to be free to modify the interaction kernels). This observation is not a rigorous impossibility result for the SDE \eqref{sde1} since it is conditional on the use of our approach. That being said, to our knowledge, at the moment there are no well-posedness results for SDEs with Besov drifts that deal with the part of the Besov scale that allows one to handle the Hardy inequality or, more generally, deal with strong attracting singularities of the drift.

5.~Strong/pathwise well-posedness for stable-driven SDEs with irregular (but more regular than \eqref{hyp1} or \eqref{b_s_cond}) drifts has been studied by Priola \cite{Pr}, who treated bounded H\"older drifts, and by X. Zhang \cite{Z}, who treated time-dependent Sobolev drifts. Related stable-like and supercritical results for H\"older drifts were obtained by Chen-Song-Zhang \cite{CSZ}, Chen-Zhang-Zhao \cite{CZZ}, and Knopova-Kulik \cite{KnK}, see also the recent result of Brze\'{z}niak-Priola-Zhai-Zhu \cite{BPZZ} regarding well-posedness of the stochastic transport equation with L\'{e}vy noise.

6.~Surveying weak solutions to SDEs in the Brownian case $\alpha=2$ would require another paper, so we  mention only a few results dealing with general singular drifts that can introduce local blow-ups: by Sem\"enov and one of us \cite{D0,KiS}, R\"ockner-Zhao \cite{RZ}, Krylov's series of papers where he also handles VMO diffusion coefficients, including \cite{Kr3,Kr4,Kr5}, and \cite{D2}. The last paper is a Brownian  counterpart of the present paper, it was a continuation of \cite{D0, KiS}. However, the probabilistic arguments in \cite{D2} are quite different from the present work. Also, a number of results below are new even in the case $\alpha=2$, such as Theorem \ref{thm2}.

\subsection{Proofs}The proof of Theorem \ref{thm1} (SDE theory) has two components:
\begin{enumerate}
\item[--] The regularity theory of the Kolmogorov backward PDE. This is based on the following operator norm estimate (an equivalent form of \eqref{hyp1}) 
\begin{equation}
\label{main_est}
\|b^{\frac{1}{p}} \cdot  \nabla (\lambda + \partial_t+(-\Delta)^{\frac{\alpha}{2}})^{-1}|b|^{\frac{1}{p'}} \|_{L^p(\mathbb R^{1+d}) \rightarrow L^p(\mathbb R^{1+d})} <c_p \delta<1,
\end{equation} 
where
$b^{\frac{1}{p}}:=b|b|^{-1+\frac{1}{p}}$. It allows us to construct the Duhamel series for the inhomogeneous equation
$$
(\lambda+\partial_t+(-\Delta)^{\frac{\alpha}{2}} + b \cdot \nabla)u=f,
$$
where \eqref{main_est} is the estimate on the denominator of the geometric series. 

To extract useful information from the Duhamel series, i.e.\,to handle irregular $f$ and to apply the parabolic Sobolev embedding to characterize the regularity of $u$, we will need two ``almost-halves'' of \eqref{main_est}:
\begin{align}
\label{half1}
\|b^{\frac{1}{p}} \cdot \nabla(\lambda+\partial_t  + & (- \Delta)^{\frac{\alpha}{2}} )^{(-1+\frac{1}{\alpha})\frac{1}{p}-\varepsilon_0}\|_{L^p(\mathbb{R}^{1+d}) \rightarrow L^p(\mathbb{R}^{1+d})} <\infty
\end{align}
\begin{align}
\label{half2}
\|(\lambda+\partial_t  + & (- \Delta)^{\frac{\alpha}{2}} )^{(-1+\frac{1}{\alpha})\frac{1}{p'}-\varepsilon_0}|b|^{\frac{1}{p'}}\|_{L^p(\mathbb{R}^{1+d}) \rightarrow L^p(\mathbb{R}^{1+d})} <\infty,
\end{align}
where ``almost'' refers to $\varepsilon_0>0$; if $\varepsilon_0=0$, then the composition of these two estimates yields \eqref{main_est}.
 The Sobolev embedding, when $p>d+1$, provides us with the Feller propagator for SDE \eqref{sde1}. It determines a family of probability measures $\mathbb P_x$ on the canonical space of c\`{a}dl\`{a}g paths parametrized by the initial point $x \in \mathbb R^d$ -- these are the candidates for the weak solutions to  SDE \eqref{sde1}.

\medskip

\item[--] Armed with these PDE regularity results and the probability measures $\{\mathbb P_x\}_{x \in \mathbb R^d}$, we apply the argument of Chen-Wang \cite{CW}, Portenko \cite{P}, Podolynny-Portenko \cite{PP}, i.e.\,we inspect the characteristic function and apply Banach's contraction principle to show that
\begin{equation*}
\mathbb E_{0,x} \biggl[e^{i \xi \cdot (\omega_t-x+\int_0^t b(s,\omega_s)ds)} \biggr]=e^{-t|\xi|^\alpha}, \quad \forall\,x,\xi \in \mathbb R^d, \;t \geq 0,
\end{equation*}
which is the characteristic function of the isotropic $\alpha$-stable process.

\end{enumerate}

The proof of Theorem \ref{thm2} on the Kolmogorov forward (Fokker-Planck) equation is based on a dual variant of the PDE results used in the proof of Theorem \ref{thm1}. We emphasize that even if we take $\alpha=2$ and $p=2$, we do not end 
up with the standard triple of Hilbert spaces $W^{1,2} \rightarrow L^2 \rightarrow W^{-1,2}$; instead, we work in an asymmetric triple of Bessel potential spaces (cf.\,Theorem \ref{thm2}), which allows us to handle a more irregular right-hand side in the Fokker-Planck equation, and therefore more irregular initial data. By working in the asymmetric triple we lose the possibility to handle irregular diffusion coefficients, but this is not among our objectives in this work.

\medskip

Condition \eqref{main_est} goes back to the work of Sem\"enov on the regularity of eigenfunctions of Schr\"odinger operators, see \cite{BS,LS}, and one of us \cite{D0} on the realization of elliptic Kolmogorov operator with singular weakly form-bounded drift $b=b(x)$ as a Feller generator. 
One important difference between these works is that a Schr\"odinger operator -- having a singular form-bounded potential -- is well defined in $L^2$ via the KLMN theorem. One thus ventures into $L^p$ to extract additional information about its domain. In contrast, in \cite{D0} none of the classical methods of the perturbation theory would apply in $L^p$, including $p=2$, so some rather deep  semigroup theory, including Hille's theory of pseudoresolvents and Trotter's approximation theorem,  had to be used. By working in the parabolic setting directly, we will use estimate \eqref{main_est} on \textit{Sem\"enov's denominator} to postulate the Duhamel series for the Kolmogorov equations, sacrificing \textit{a few} spatial singularities compared to the weakly form-bounded drifts, but avoiding all the difficulties arising in the elliptic setting and also gaining critical singularities of the drift  in time.

At the time of writing, we are aware of three ways to verify \eqref{main_est}  or of its elliptic counterpart, see Table \ref{tab1}.

\begin{table}[h]
\centering
\caption{Ways to verify estimates \eqref{main_est}-\eqref{half2}}
\label{tab1}
\begin{tabular}{|c|c|c|}
\hline
Condition on drift $b$ & $\alpha$ & Features \\
\hline
\makecell{\\ 1.~Form-bounded $b=b(x)$ \\ $\||b|(\lambda-\Delta)^{-\frac{1}{2}}\|_{L^2(\mathbb R^d) \rightarrow L^2(\mathbb R^d)} \leq \delta$ \\ \text{ }} & $\alpha=2$ & \makecell{Allows for explicit control of \\ admissible $\delta$ for SDE \cite{D4} \\ Has a parabolic variant}  \\
\hline
\makecell{ \\ 2.~Weakly form-bounded $b=b(x)$ \\ [4mm] } & $1<\alpha \leq 2$ & \makecell{Largest class of spatial singularities \\ $\delta$ is sufficiently small \cite{KM,D0}} \\
\hline
\makecell{3.~Morrey class condition \eqref{b_s_cond} on $b=b(t,x)$} & $1<\alpha \leq 2$ & \makecell{Has parabolic variant \\  Morrey norm ($\approx \delta$) is small \\ Spatial singularities in-between 1 \& 2\\ Can be split into ``halves''} \\
\hline
\end{tabular}
\end{table}

In this paper we pursue the third condition, i.e.\,the Morrey class \eqref{b_s_cond}. In fact, for the Morrey class it is possible to obtain \eqref{main_est}, using the boundedness of the parabolic Riesz transform on $L^p(\mathbb R^{1+d})$, from the following two ``halves'': given $0<\varepsilon ~(<d+\alpha-1)$, for every $1<p<\infty$,
\begin{align}
\label{adams_est1_}
\||b|^{\frac{1}{p}}(\pm \partial_t  + & (- \Delta)^{\frac{\alpha}{2}} )^{(-1+\frac{1}{\alpha})\frac{1}{p}}\|_{L^p(\mathbb{R}^{1+d}) \rightarrow L^p(\mathbb{R}^{1+d})} \leq c_{d,\alpha,p,\varepsilon}\||b|^{\frac{1}{\alpha-1}}\|^{\frac{\alpha-1}{p}}_{E_{1+\epsilon}},
\end{align}
and, by duality,
\begin{align}
\label{adams_est2_}
\|(\pm \partial_t  + & (- \Delta)^{\frac{\alpha}{2}} )^{(-1+\frac{1}{\alpha})\frac{1}{p'}}|b|^{\frac{1}{p'}}\|_{L^p(\mathbb{R}^{1+d}) \rightarrow L^p(\mathbb{R}^{1+d})} \leq c_{d,\alpha,p',\varepsilon}\||b|^{\frac{1}{\alpha-1}}\|^{\frac{\alpha-1}{p'}}_{E_{1+\epsilon}}
\end{align}
(so one actually can take $\varepsilon_0=0$ in \eqref{half1}, \eqref{half2} -- but this is not possible for weakly form-bounded drifts, for which \eqref{half1}, \eqref{half2} require $\varepsilon_0>0$). See Section \ref{adams_est_sect} for precise statement and the history of such estimates.

Let us also add that for the class of form-bounded drifts one can manage the gradient instead of getting rid of it via Riesz transform or even using pointwise heat kernel estimates, arriving at better conditions on $\delta$, but this requires some rather delicate and strong gradient bounds \cite{D4}; it is not clear at the moment how to obtain them for the other classes.

Our class of drifts is large enough to destroy $L^p$ estimates, for $p$ large, on the order $\alpha$ spatial derivatives of solutions of the Kolmogorov backward equation. Even in the case $\alpha=2$, this excludes the possibility to also incorporate discontinuous diffusion coefficients, such as VMO; the reason is the size of our class of singular drifts.

Theorems \ref{thm1}, \ref{thm2} allow us to describe quantitatively how the theory of SDE \eqref{sde1} and of the corresponding Kolmogorov equations changes if we change the drift $b$ to $cb$ for a constant $c>0$ or, in other words, adjust the Morrey norm (form-bound) of the drift. Obtaining such a quantitative description is, essentially, our ultimate goal in the present work.

\subsection{Notations}

\label{notations_sect}
\begin{enumerate}[label=(\alph*)]

\item Put
$$
\langle h \rangle :=\int_{\mathbb{R}^{1+d}} h dz, \quad \langle h,g\rangle:=\langle hg\rangle.
$$

\item 
Denote by $\mathcal B(X,Y)$ the space of bounded linear operators between Banach spaces $X \rightarrow Y$, endowed with the operator norm $\|\cdot\|_{X \rightarrow Y}$, and set $\mathcal B(X):=\mathcal B(X,X)$.

\item We write $T=s\mbox{-} X \mbox{-}\lim_n T_n$ for $T$, $T_n \in \mathcal B(X)$ if $Tf=\lim_n T_nf$ in $X$ for every $f \in X$. 

\item 
Put $$\|h\|_p \equiv \|h\|_{L^p(\mathbb{R}^{1+d})}:=\langle |h|^p\rangle^{\frac{1}{p}}.$$ 
Denote by $\|\cdot\|_{p \rightarrow q}:=\|\cdot\|_{L^p(\mathbb{R}^{1+d}) \rightarrow L^q(\mathbb{R}^{1+d})}$ the corresponding operator norm.

\item Let $$C_\infty(\mathbb R^{d}):=\{f \in C(\mathbb R^{d}) \mid \lim_{|x| \rightarrow \infty}f(x)=0\},$$ 
endowed with the $\sup$-norm. 

\item We denote by $C_b(\mathbb{R}^{d})$ the space of bounded continuous functions on $\mathbb{R}^{d}$, and by $\mathcal{S}(\mathbb{R}^{d})$ the Schwartz space of smooth functions rapidly decaying at infinity.

\item We will need the polynomial weight
\begin{equation}
\label{rho_def0}
\rho(x)=(1+\kappa|x|^2)^{-\frac{\theta}{2}}, \quad \theta>d, \quad \kappa>0.
\end{equation}
The derivatives of both $\rho$ and $\rho^{-1}$ are bounded from above by $\rho$ and $\rho^{-1}$, respectively, times a constant that can be made as small as needed by selecting $\kappa$ sufficiently small.
Define the weighted Lebesgue space $$L^p_\rho(\mathbb R^{1+d}):=L^p(\mathbb R^{1+d},\rho(x)dtdx).$$
The dual space of $L^p_\rho(\mathbb{R}^{1+d})$ is $L^{p'}_{\rho^{1-p'}}(\mathbb{R}^{1+d})$, where $\frac{1}{p} + \frac{1}{p'} = 1$.

\item Recall that the integral kernel $q_\gamma(t,x)$, $0<\gamma \leq 2$, of the operators ($\lambda \geq 0$)
\begin{align*}
(\lambda+\partial_t+(-\Delta)^{\frac{\alpha}{2}})^{-\frac{\gamma}{2}}f(t,x) &=\int_{\mathbb{R}^{1+d}} e^{-\lambda(t-s)}q_\gamma(t-s,x-y)f(s,y)dyds \\[2mm]
(\lambda-\partial_t+(-\Delta)^{\frac{\alpha}{2}})^{-\frac{\gamma}{2}}f(t,x)&=\int_{\mathbb{R}^{1+d}} e^{-\lambda(s-t)}q_\gamma(s-t,x-y)f(s,y)dyds
\end{align*}
satisfies the pointwise bounds
\begin{equation}
\label{ul_bounds}
c\, p_\gamma(t,x) \leq q_\gamma(t,x)  \leq C\, p_\gamma(t,x)
\end{equation}
for
$$
p_\gamma(t,x):=\mathbf{1}_{\{t>0\}} t^{\frac{\gamma}{2}-1}\bigg(t^{-\frac{d}{\alpha}}\wedge\frac{t}{|x|^{d+\alpha}}\bigg) = \mathbf{1}_{\{t>0\}} \left\{
\begin{array}{ll}
t^{\frac{\gamma}{2}}|x|^{-d-\alpha}, & |x| \geq t^{\frac{1}{\alpha}}, \\
t^{\frac{\gamma}{2}-\frac{d+\alpha}{\alpha}}, & |x| < t^{\frac{1}{\alpha}},
\end{array}
\right.
$$
with constants $0<c$, $C<\infty$ depending only on $d$, $\alpha$, and $\gamma$.
Furthermore, 
\begin{equation}
\label{grad_bd}
|\nabla_x q_\gamma(t,x)| \leq C_1 p_{\gamma-\frac{2}{\alpha}}(t,x).
\end{equation}

\item L\'evy representation for the fractional Laplacian:
\begin{equation}
\label{levy}
-(-\Delta)^{\frac{\alpha}{2}} f(x)
=
c_{d,\alpha}\int_{\mathbb R^d}
\Big(f(x+z)-f(x)-\mathbf 1_{\{|z|\le1\}}\nabla f(x)\cdot z\Big)\frac{dz}{|z|^{d+\alpha}}.
\end{equation}

\item Let  $\mathbb{W}_\alpha^{\gamma,p}(\mathbb{R}^{1+d})$ 
denote the fractional parabolic Bessel potential space:
$$
\mathbb{W}_\alpha^{\gamma,p}(\mathbb{R}^{1+d})
:=
(\lambda+\partial_t+(-\Delta)^{\frac{\alpha}{2}})^{-\frac{\gamma}{2}}
L^p (\mathbb{R}^{1+d})
$$
with norm
$$
\|g\|_{\mathbb{W}_\alpha^{\gamma,p}}
:=
\| (\lambda+\partial_t+(-\Delta)^{\frac{\alpha}{2}})^{\frac{\gamma}{2}} g\|_p.
$$

\item The parabolic Sobolev embedding theorem: for all
 $1 \leq p \leq q \leq \infty$ such that
\begin{equation}
\label{sob_emb0}
\frac{\gamma}{2}>\biggl(1+\frac{d}{\alpha} \biggr)\bigg(\frac{1}{p}-\frac{1}{q}\bigg),
\end{equation}
one has
\begin{equation}
\label{sob_emb}
\big(\lambda+\partial_t + (-\Delta)^{\frac{\alpha}{2}}\big)^{-\frac{\gamma}{2}} \in \mathcal B(L^p,L^q), \qquad \text{ so }\mathbb{W}_\alpha^{\gamma,p} \subset L^q(\mathbb{R}^{1+d}).
\end{equation}
See \cite{AS}.

We refer to the recent comprehensive treatment  of the theory of elliptic and parabolic Morrey spaces by Krylov \cite{Kr2}. See also Adams-Xiao \cite{AX}.

\item 
Let $I$ be either $\mathbb R_+:=[0,\infty[$ or a closed interval in $\mathbb R_+$. We denote by $D=D(I,\mathbb R^d)$  the space of c\`{a}dl\`{a}g paths $I \rightarrow \mathbb R^d$.

\item 
A weak solution to the SDE \eqref{sde1} is a pair of c\`{a}dl\`{a}g processes $(X_t,Z_t)$ defined on a complete probability space  $(\Omega,\{\mathcal F_t\}_{t \geq s},\mathcal F,\mathbb P)$ with a right-continuous filtration, such that $X_0=x$ $\mathbb P$-a.s., $$\int_0^T |b(s,X_s)|ds<\infty \quad \text{$\mathbb P$-a.s.},$$ $Z_t$ is an isotropic $\alpha$-stable process and 
the identity \eqref{sde1} holds $\mathbb P$-a.s. 

\medskip

In Theorem \ref{thm1}, we construct a weak solution to the SDE \eqref{sde1} by taking  $\Omega:=D([s,T],\mathbb R^d)$ (for $0 \leq s<T<\infty$ fixed), i.e.\,on the canonical space of c\`{a}dl\`{a}g paths 
$$\mathbf{D}_{s,T}:=(D([s,T],\mathbb R^d),\{\mathcal F_t\}_{t \geq s},\mathcal F),$$  where $\mathcal F_t$ is the canonical filtration (complete, right-continuous). 
We will define $X_t(\omega):=\omega(t)$ to be the coordinate process and will  define $Z_t$ via identity \eqref{sde1}. Our goal thus will be to find a probability measure on $\mathbf{D}_{s,T}$ that makes $Z_t$ an isotropic $\alpha$-stable process.

\medskip

\item  The It\^{o} formula for $X_t$, a weak solution of SDE \eqref{sde1}, implies, in particular, that for every bounded $f \in C_t^1C^2_x(\mathbb R \times \mathbb R^d)$ having bounded derivatives
\begin{equation}
\label{ito}
f(t,X_t)=f(0,x)+\int_0^t \bigl(\partial_s f  - b \cdot \nabla f - (-\Delta)^{\frac{\alpha}{2}}f\bigr)(s,X_s)ds + M_t,
\end{equation}
where $M_t$ is the compensated jump martingale part.

\end{enumerate}

\subsection*{Acknowledgements} We express our gratitude to N. V. Krylov for many useful discussions, in particular concerning the localization argument in Proposition~\ref{prop1_weight}; we use some of his arguments in the proof. We are also deeply grateful to M.-A. Orsoni for a number of insightful comments. C. Yue would like to thank A. Girouard for his attention to this work and support.

\bigskip

\section{Main results}

We will state our results first in terms of the Morrey class norm -- this is an easy-to-verify condition. Our Morrey class hypothesis will be of two types:

\begin{enumerate}
\item[($\mathbf{H}$)] Let $b:\mathbb R^{1+d} \rightarrow \mathbb R^d$ be a Borel measurable vector field of the form
\begin{equation}
\label{b_decomp}
b=b_{\mathfrak{s}} + b_{\mathfrak{b}},
\end{equation}
where $|b_{\mathfrak{b}}| \in L^\infty(\mathbb R^{1+d})$ (the bounded part) and the singular part has sufficiently small Morrey norm, i.e.
\begin{equation*}
\label{b_s_cond0}
\| |b_{\mathfrak s} |^{\frac{1}{\alpha-1}} \|_{E_{1+\epsilon}} <c_{d,\alpha,\epsilon},
\end{equation*}
where $\epsilon>0$ is fixed small\footnote{The constant $c_{d,\alpha,\varepsilon}>0$ depends on the constants in some fundamental inequalities of harmonic analysis.}.

\item[($\mathbf{H}_1$)] Let ($\mathbf{H}$) hold, and assume additionally that
$$|b_{\mathfrak{s}}| \in L^1(\mathbb R^{1+d}).$$

\end{enumerate}

Fix some $T > 0$.

\begin{theorem}[SDE theory]
\label{thm1}
Let $d \geq 2$. Assume that the Borel measurable drift $b:\mathbb R^{1+d} \rightarrow \mathbb R^d$ satisfies {\rm($\mathbf{H}_1$)}. (Or, in complete generality, assume that the operator norms in \eqref{adams_est_weight}, \eqref{adams_est2_weight} are sufficiently small.)
Then
 the following are true:
 
 \smallskip

\begin{enumerate}[label=(\roman*)] 

\item {\rm (Weak existence)} For each $x \in \mathbb R^d$ and $0 \leq s < T$, there exists a  probability measure $\mathbb P_{s,x}$ on $\mathbf{D}_{s,T}$ such that
\begin{enumerate}[label=(\alph*)]

\item 
$$
\mathbb P_{s,x}[\omega_s=x]=1,
$$

\item
$$
\mathbb E_{s,x}\int_s^t |b(r,\omega_r)|dr<\infty, \quad t \geq s,
$$

\item 
$$
\omega_t = x - \int_s^t b(r, \omega_r) dr + Z_t-Z_s,
$$
for an $\alpha$-stable process $Z_t$ defined on the completion of  $\mathbf{D}_{s,T}$ with respect to $\mathbb P_{s,x}$. 
\end{enumerate}

That is, the pair (coordinate process $X_t$, $Z_t$ defined via (c)) is a weak solution to the SDE \eqref{sde1}.

Also, assuming that the constant $\theta$ in the definition of weight $\rho$ is fixed to satisfy $d<\theta<d+\alpha$, and $\kappa$ is fixed sufficiently small, the following Krylov-type bound holds: for every $x \in \mathbb R^d$ and any Borel vector field $\mathsf{f}:\mathbb R^{1+d} \rightarrow \mathbb R^d$ that admits decomposition \eqref{b_decomp} with finite Morrey norm of the singular part, for every $p>d+1$ 
there exists a constant $K$ such that
\begin{equation}
\label{krylov_bd}
\mathbb E_{s,x}\int_s^T |\mathsf{f}(r,\omega_r)h(r,\omega_r)|dr \leq K\|\mathbf{1}_{[0,T]}|\mathsf{f}|^{\frac{1}{p}} h\|_{L_\rho^p}, 
\end{equation}
for all $h \in C_b(\mathbb R^{1+d})$.
In particular, taking $\mathsf{f}=b$, we obtain that the constructed weak solution to \eqref{sde1} does not spend too much time around the singular set of the drift. 
This Krylov-type bound also holds for $|\mathsf{f}|=1$ and $h \in (C_b \cap L^p)(\mathbb R^{1+d})$.

\medskip

\item {\rm (Feller propagator)}
The operators
\begin{equation}
\label{P}
P^{t,r}f(x):=\mathbb E_{t,x}[f(X_r)], \quad 0 \leq t \leq r, \quad x \in \mathbb R^d
\end{equation}
constitute a backward Feller propagator on $C_\infty$.

\medskip

\item  {\rm (Conditional weak uniqueness)} Let $x \in \mathbb R^d$. Any probability measure $\mathbb Q_{s,x}$ on $\mathbf{D}_{s,T}$ that satisfies (a)-(c) in (\textit{i}) and, for some $p>d+1$ from (\textit{i}),
\begin{equation*}
\mathbb E_{\mathbb Q_{s,x}}\int_s^T |h(r,\omega_r)|dr \leq K\|\mathbf{1}_{[s,T]}h\|_{p},
\end{equation*}
and
\begin{equation*}
\mathbb E_{\mathbb Q_{s,x}}\int_s^T |b(r,\omega_r)h(r,\omega_r)|dr \leq K\|\mathbf{1}_{[s,T]}|b|^{\frac{1}{p}}h\|_{p}
\end{equation*}
for all $h \in (C_b \cap L^p)(\mathbb R^{1+d})$ also satisfies $$\mathbb Q_{s,x}=\mathbb P_{s,x}$$
(it is seen that the weak solution constructed in {\rm (\textit{i})} satisfies the previous two inequalities).

\end{enumerate}

\end{theorem}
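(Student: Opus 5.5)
The approach is to approximate the drift by bounded smooth fields, prove bounds that are uniform in the approximation on the Kolmogorov backward equation, and pass to the limit. I fix bounded smooth $b_n \to b$ a.e., for instance $b_n := \mathbf{1}_{|b|\le n} b$ mollified, chosen so that the Morrey norms of $|b_{n,\mathfrak s}|^{\frac{1}{\alpha-1}}$ are controlled by that of $|b_{\mathfrak s}|^{\frac{1}{\alpha-1}}$ uniformly in $n$. For bounded $b_n$, Komatsu's theorem gives weak solutions $\mathbb P^n_{s,x}$ and Feller propagators $P^{t,r}_n$.

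For $f$ in a dense class, the backward equation $(\lambda+\partial_t+(-\Delta)^{\frac{\alpha}{2}}+b_n\cdot\nabla)u_n=f$ (in the reversed-time convention) is solved by the Duhamel series
\[
u_n=(\lambda+\partial_t+(-\Delta)^{\frac{\alpha}{2}})^{-1}\sum_{k\ge0}(-b_n\cdot\nabla(\lambda+\partial_t+(-\Delta)^{\frac{\alpha}{2}})^{-1})^k f .
\]
I regroup it via the factorization $b_n\cdot\nabla=|b_n|^{\frac1{p'}}\,b_n^{\frac1p}\cdot\nabla$. Semenov's denominator \eqref{main_est} is obtained from the two halves \eqref{adams_est1_}, \eqref{adams_est2_} together with boundedness of the parabolic Riesz transform. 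Its norm is $<1$ uniformly in $n$, so the geometric series converges in $L^p$, or in $L^p_\rho$ when one uses the weighted variants \eqref{adams_est_weight}, \eqref{adams_est2_weight}. The almost-halves \eqref{half1}, \eqref{half2} then place $u_n$ in $\mathbb W^{\gamma,p}_\alpha$ with some $\gamma>1+\frac{2}{\alpha}\cdot$(small), uniformly in $n$. Since $p>d+1$, the Sobolev embedding \eqref{sob_emb} gives a uniform H\"older bound on $u_n$ together with $u_n\to u$ uniformly. The series for $b$ and $b_n$ differ termwise by factors that tend to zero in $L^p$ by dominated convergence, which is where the weight $(1+|b|)^{-p+1}$ is used.

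Taking $f=\mathbf 1_{[0,T]}|\mathsf f|^{\frac1p}h$, with the corresponding $|\mathsf f|^{\frac1{p'}}$ absorbed on the right via \eqref{half2}, the same representation gives $\mathbb E^n_{s,x}\int_s^T|\mathsf f h|\,dr\le K\|\mathbf 1_{[0,T]}|\mathsf f|^{\frac1p}h\|_{L^p_\rho}$ uniformly in $n$. This is the Krylov bound \eqref{krylov_bd}. The weight $\rho$ with $d<\theta<d+\alpha$ is needed so that $h\in C_b$ is admissible, and to control the polynomial tails of the stable kernel when commuting $\rho$ past $(\lambda+\partial_t+(-\Delta)^{\frac{\alpha}{2}})^{-1}$.

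For existence (i), I show tightness of $\{\mathbb P^n_{s,x}\}$ in $D$ via Aldous' criterion: the drift increment is controlled by the Krylov bound applied with $\mathsf f=b_n$ and $h=1$ on short intervals, and the $\alpha$-stable part is tight. Along a subsequence the measures converge to some $\mathbb P_{s,x}$. Convergence $u_n\to u$ in $C_\infty$ identifies $P^{t,r}f=\lim_n P^{t,r}_nf$, so the limit is independent of the subsequence, and (ii) follows: $P^{t,r}f$ is continuous, vanishes at infinity, is strongly continuous in $(t,r)$, and satisfies the propagator property inherited from the $n$-level. To verify (a)--(c), I pass to the limit in $\int_s^t b_n(r,\omega_r)\,dr$. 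Here I replace $b$ by a bounded continuous cutoff $b^{(m)}$, use the Krylov bound with $\mathsf f=b$ to estimate $\mathbb E\int|b-b^{(m)}|$ uniformly, and invoke ($\mathbf H_1$), i.e. $|b_{\mathfrak s}|\in L^1$, to make $\||b-b^{(m)}|^{\frac1p}\|_{L^p_\rho}$ small. Following Chen--Wang and Portenko, I then check that $Z_t:=\omega_t-x+\int_s^tb\,dr$ satisfies $\mathbb E_{s,x}e^{i\xi\cdot(Z_t-Z_s)}=e^{-(t-s)|\xi|^\alpha}$ with independent increments. The Markov property of $P^{t,r}$ reduces this to conditional versions of the same identity, which pass from the $n$-level.

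For conditional uniqueness (iii), I apply It\^o's formula \eqref{ito} under $\mathbb Q_{s,x}$ to $u_n$ and write $\mathbb E_{\mathbb Q}\int(b-b_n)\cdot\nabla u_n$. The two assumed Krylov-type bounds, combined with $b_n^{\frac1p}\cdot\nabla u_n$ being controlled in $L^p$, make this error vanish as $n\to\infty$. Hence the one-dimensional marginals of $\mathbb Q_{s,x}$ coincide with $P^{s,r}$, and by the Markov-type conditioning argument, equivalently by running the Portenko contraction for the characteristic function, $\mathbb Q_{s,x}=\mathbb P_{s,x}$. I expect the main obstacle to be the uniform-in-$n$ weighted estimates: establishing \eqref{main_est} and \eqref{half1}, \eqref{half2} in $L^p_\rho$, which requires a careful localization so that the commutators with $\rho$ are small, and making sure that $\nabla u_n$ enters only through $b^{\frac1p}\cdot\nabla u$, since $\nabla u\notin L^\infty$ in general.
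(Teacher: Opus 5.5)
Your architecture is the paper's. It uses the Duhamel series with Sem\"enov's denominator built from Propositions \ref{prop1} and \ref{prop1_weight}, and the embedding of $\mathbb{W}_\alpha^{\frac{2}{\alpha}+\frac{\alpha-1}{\alpha}\frac{2}{p},p}$ into bounded continuous functions, which is available exactly when $p>d+1$. It also uses the $n$-uniform weighted bound \eqref{apr_bd}, the Feller propagator as a strong limit, and It\^{o}'s formula applied to $u_n$ for (iii).

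You depart from the paper in two places. First, you get weak convergence of $\mathbb P^n_{s,x}$ from Aldous' criterion. The paper instead cites the classical fact that strong convergence of Feller propagators gives weak convergence of the path measures. Your version needs the Krylov bound uniformly in the starting point (centre $\rho$ there) and equi-integrability in time of $|b_{n,\mathfrak s}|$; both are available under ($\mathbf{H}_1$). Second, and more substantially, you identify $Z$ by passing to the limit directly in the level-$n$ identity $\mathbb E^n_{s,x}e^{i\xi\cdot(\omega_t-x+\int_s^t b_n)}=e^{-(t-s)|\xi|^\alpha}$ and its multi-time analogue. You interpolate through a smooth $b_m$ and use $|e^{ia}-e^{ia'}|\le|a-a'|$. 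The paper instead runs the Portenko--Chen--Wang scheme: $\nu$ and $\tilde\nu$ solve the same Volterra equation \eqref{eq_nu} (Claims \ref{cl1}, \ref{cl2}), and that equation has only one bounded solution by a contraction on short time intervals (Claim \ref{cl3}).

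Your route is valid and shorter. It uses exactly the ingredients the paper already needs inside Claim \ref{cl2}: \eqref{P_convx}, and the Krylov bound under $\mathbb P^n$ uniformly in $n$ and under $\mathbb P$. It gives independent increments at once and dispenses with the $\sup_x$ contraction. The price is that your test functional couples $\omega_t$ with the path integral, so you need $\mathbb P_{s,x}[\omega_{t-}\neq\omega_t]=0$ for fixed $t$. This is harmless here, or you can take $t$ off a countable set and use right-continuity. The paper only needs $\mathbb E^n h(\omega_t)\to\mathbb E h(\omega_t)$, which the propagators supply.

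Two steps need more than you wrote.

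\textbf{Krylov bound for the limit measure.} Your Duhamel argument yields \eqref{apr_bd} for $\mathbb P^n_{s,x}$, not \eqref{krylov_bd} for the limit; weak convergence alone does not transfer it, because $\mathsf f$ is only Borel. The paper splits $\mathsf f_n=\mathsf f_m+(\mathsf f_n-\mathsf f_m)$, applies Fatou, and shows via open neighbourhoods that $\mathbb P_{s,x}$ does not charge Lebesgue-null subsets of $[s,T]\times\mathbb R^d$. Your verification of (b) and the last error term in (c) rely on this.

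\textbf{The error term in (iii).} The mechanism you state does not work. On $\{|b|>n\}$ one has $\mathbf 1_n b=0$, so bounds on $b_n^{\frac1p}\cdot\nabla u_n$ say nothing about
$$\mathbb E_{\mathbb Q}\int_0^{T\wedge\tau_R}|b|(1-\mathbf 1_n)|\nabla u_n|\le K\big\|\mathbf 1_{[0,T]\times B_R}|b|^{\frac1p}(1-\mathbf 1_n)\nabla u_n\big\|_p .$$
Mere boundedness would not make this vanish in any case. The fix has three parts:
\begin{enumerate}
\item Control $|b|^{\frac{1}{p_1}}|\nabla u_n|$ with the true drift. That is, apply $R_{p_1}(b)$, bounded by Proposition \ref{prop1} for $b$ itself, to the $b_n$-Duhamel series, uniformly in $n$, for some admissible $p_1>p$.
\item Apply H\"older with $\|\mathbf 1_{[0,T]\times B_R}|b|^{\frac1q}(1-\mathbf 1_n)\|_q\to 0$, where $\frac1p=\frac1{p_1}+\frac1q$. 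Alternatively, use strong $L^p$ convergence of $|b|^{\frac1p}\nabla u_n$ together with uniform integrability.
\item Treat the mollification error $\mathbf 1_nb-b_n$ separately, using the uniform bound on $\|\nabla u_n\|_{p_1}$ and a fast choice of $\varepsilon_n$. The localization by $\tau_R$ is needed because $|b_{\mathfrak b}|(1-\mathbf 1_n)$ is not globally integrable.
\end{enumerate}

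Finally, the Portenko contraction is not an alternative way to upgrade equal one-dimensional marginals to $\mathbb Q_{s,x}=\mathbb P_{s,x}$. Claim \ref{cl1} uses the Markov property of the family, which $\mathbb Q_{s,x}$ is not known to have, so only the conditioning argument is available there. The paper itself passes from \eqref{id4} to equality of measures in one line.
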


	We consider the uniqueness class of (\textit{iii}) -- rather than the standard uniqueness class in terms of the Krylov bound -- since this allows us to capture the following effect: the uniqueness class becomes larger as $b$ becomes more regular. Namely, a nice $b$ (that is, one having small Morrey norm) allows us to take a large  $p$. So, in principle, in the limit, $p \rightarrow \infty$, e.g.\,bounded $|b|$, we recover the  uniqueness in law.

Another form of conditional weak uniqueness of $X_t$ is established in the next theorem.

\medskip

We fix the following regularization of $b$:
\begin{equation}
\label{b_n}
b_{n}:=b_{n,\mathfrak{b}}+ b_{n,\mathfrak{s}}, \quad n=1,2,\dots,
\end{equation}
where
\begin{equation}
\label{b_n2}
b_{n,\mathfrak{b}}=\gamma_{\varepsilon_n} \star \mathbf{1}_n b_{\mathfrak{b}}, \quad b_{n,\mathfrak{s}}=\gamma_{\varepsilon_n} \star \mathbf{1}_n b_{\mathfrak{s}},
\end{equation}
and 
$$\mathbf{1}_n:=\mathbf{1}_{\{(t,x) \in \mathbb{R}^{1+d} \mid |t| \leq n, |x| \leq n, |b(t,x)| \leq n\}},
$$ 
 $\{\gamma_\epsilon\}_{\epsilon>0}$ is the standard Friedrichs mollifier on $\mathbb{R}^{1+d}$ with compact support. 
We can  select
 $\epsilon_n \downarrow 0$ sufficiently rapidly to have
$
\sup_n\| |b_{n,\mathfrak{s}} |^{\frac{1}{\alpha-1}} \|_{E_{1+\epsilon}}$ as close to $\| |b_{\mathfrak{s}} |^{\frac{1}{\alpha-1}} \|_{E_{1+\epsilon}}
$ as needed, say,
$$
\| |b_{n,\mathfrak{s}} |^{\frac{1}{\alpha-1}} \|_{E_{1+\epsilon}} \leq (1+2^{-n})\| |b_{\mathfrak{s}} |^{\frac{1}{\alpha-1}} \|_{E_{1+\epsilon}}.
$$
Alternatively, we can multiply $b_n$ defined as above by 
 $0<c_n \uparrow 1$ (sufficiently slowly) to have inequality $\| |b_{n,\mathfrak{s}} |^{\frac{1}{\alpha-1}} \|_{E_{1+\epsilon}} \leq \| |b_{\mathfrak{s}} |^{\frac{1}{\alpha-1}} \|_{E_{1+\epsilon}}$ for all $n=1,2,\dots$.

\begin{remark} By selecting $\varepsilon_n \downarrow 0$ sufficiently rapidly, we  can make $b_n$ as close to $\mathbf{1}_n b$ as needed in any $L^s(\mathbb{R}^{1+d})$, $1 \leq s<\infty$, so that we can treat $\{|b_n|\}_{n \geq 1}$ as essentially a monotone approximation of $|b|$, in the same way as $\{|\mathbf{1}_n b|\}_{n \geq 1}$ is.
\end{remark}

Let us  add that the role of $\lambda>0$ below is to handle the bounded part of the drift, and also to let us work with nicer Sobolev embeddings. We also recall that $\lambda$ enters the definition of the parabolic Bessel potential spaces $\mathbb{W}_\alpha^{\gamma,p}$.

\begin{theorem}[PDE theory]
	\label{thm2} Assume that the Borel measurable drift $b:\mathbb R^{1+d} \rightarrow \mathbb R^d$ satisfies  {\rm($\mathbf{H}$)}.
(Or, in complete generality, assume that the operator norms in \eqref{adams_est1}, \eqref{adams_est2} are sufficiently small.)
	Then the following are true:

	\begin{enumerate}[label=(\roman*)]

		\item
		For every $1<p<\infty$, there exist constants $c_{d,\alpha,p,\epsilon}>0$ and $\lambda_{d,\alpha,p,\epsilon} \geq 0$ such that if 
		$$\||b_{\mathfrak s}|^{\frac{1}{\alpha-1}}\|_{E_{1+\epsilon}} < c_{d,\alpha,p,\epsilon}$$ 
		then, given a right-hand side 
		\begin{equation}
		\label{f_cond}
		f \in \mathbb{W}_\alpha^{-\frac{2}{\alpha}+(-1+\frac{1}{\alpha})\frac{2}{p'},p}(\mathbb{R}^{1+d})
		\end{equation}
		and a sequence of functions
		$\{f_n\} \subset L^\infty(\mathbb{R}^{1+d})$ such that $$
		f_n \rightarrow f \quad \text{ in } \mathbb{W}_\alpha^{-\frac{2}{\alpha}+(-1+\frac{1}{\alpha})\frac{2}{p'},p},
		$$
		the solutions $\eta_n$ of the approximating inhomogeneous Kolmogorov-Fokker-Planck equations
		\begin{equation*}
			\lambda \eta_n + \partial_t 
			\eta_n
			 +(- \Delta)^{\frac{\alpha}{2}} \eta_n - {\rm div\,} (b_n   \eta_n)=f_n, \qquad \lambda > \lambda_{d,\alpha,p,\epsilon},
		\end{equation*}
		where $\{b_n\}$ is defined in \eqref{b_n},
		converge as $n \rightarrow \infty$ in $\mathbb{W}_\alpha^{\frac{\alpha-1}{\alpha}\frac{2}{p},p}$ to the function (formal Duhamel series) 
		\begin{equation}
			\label{FPu_repr31}
			\eta=(\lambda+\partial_t  +(- \Delta)^{\frac{\alpha}{2}})^{(-1+\frac{1}{\alpha})\frac{1}{p}}(1-G_pH_p)^{-1} (\lambda+\partial_t  +(- \Delta)^{\frac{\alpha}{2}})^{-\frac{1}{\alpha}+(-1+\frac{1}{\alpha})\frac{1}{p'}} f
		\end{equation}
		where
		\begin{equation*}
			G_p:=\nabla \cdot (\lambda+\partial_t  +(- \Delta)^{\frac{\alpha}{2}} )^{-\frac{1}{\alpha}+(-1+\frac{1}{\alpha})\frac{1}{p^\prime}}  b^{\frac{1}{p^\prime}}, 
		\end{equation*}
		\begin{equation*}
			H_p:=|b|^{\frac{1}{p}}(\lambda+\partial_t  +(- \Delta)^{\frac{\alpha}{2}})^{(-1+\frac{1}{\alpha})\frac{1}{p}},
		\end{equation*}
		are bounded operators on $L^p(\mathbb{R}^{1+d})$, and the $L^p \rightarrow L^p$ operator norm of the denominator of the geometric series in \eqref{FPu_repr31} is dominated by
		\begin{equation*}
			\|G_p\|_{p \rightarrow p}\|H_p\|_{p \rightarrow p} <1,
		\end{equation*}
		so the series converges in $L^p(\mathbb{R}^{1+d})$.
		It follows that $\eta \in \mathbb{W}_\alpha^{(1-\frac{1}{\alpha})\frac{2}{p},p}.$
		
	Another representation for $\eta$:
		\begin{align}\label{FPu_repr32}
			& \eta =  (\lambda+\partial_t  +(- \Delta)^{\frac{\alpha}{2}})^{-1}f \notag \\
			&\quad - (\lambda+\partial_t  +(- \Delta)^{\frac{\alpha}{2}})^{(-1+\frac{1}{\alpha})\frac{1}{p}}G_p (1-H_pG_p)^{-1}H_p (\lambda+\partial_t  +(- \Delta)^{\frac{\alpha}{2}})^{-\frac{1}{\alpha}+(-1+\frac{1}{\alpha})\frac{1}{p'}} f. 
		\end{align}

		\smallskip

		\item For a fixed $1<p<\infty$ from {(\textit{i})}, given a right-hand side $f$ satisfying \eqref{f_cond}, the function $\eta$ given by \eqref{FPu_repr31} is the unique $L^p$ weak solution (see the definition after the theorem) to the inhomogeneous Kolmogorov-Fokker-Planck equation 
	\begin{align}\label{FP3}
		\lambda \eta + \partial_t \eta +(-\Delta)^{\frac{\alpha}{2}}\eta- {\rm div\,}( b  \eta)=f.
	\end{align}

		\item In particular, whenever $1<p<\frac{d+1}{d}$ in {\rm(\textit{ii})}, the following Cauchy problem for the Kolmogorov-Fokker-Planck  equation
		\begin{equation}\label{FP_initial}
		\left\{
		\begin{array}{l}
					\lambda  \eta + \partial_t \eta +(-\Delta)^{\frac{\alpha}{2}}\eta- {\rm div\,}( b  \eta)=0, \\
					\eta|_{t=0}=\delta_x,
					\end{array}
					\right.
		\end{equation} 
		has a unique weak solution $\eta(t,\cdot)$. This weak solution corresponds to selecting the right-hand side $f:=\delta_{t=0}\delta_x$ in (\textit{ii}) -- it satisfies \eqref{f_cond}.
	\end{enumerate}
\end{theorem}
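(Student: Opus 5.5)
The plan is to run the Duhamel (geometric-series) construction in $L^p(\mathbb R^{1+d})$, using the Morrey-class estimates \eqref{adams_est1_}, \eqref{adams_est2_} (their $\lambda>0$ versions, cf.\ \eqref{adams_est1}, \eqref{adams_est2}) as the two halves of Sem\"enov's denominator.

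\textbf{Step 1: boundedness of $G_p$ and $H_p$.} Write $A:=\lambda+\partial_t+(-\Delta)^{\frac{\alpha}{2}}$. For $H_p=|b|^{\frac1p}A^{(-1+\frac1\alpha)\frac1p}$, I would split $b=b_{\mathfrak s}+b_{\mathfrak b}$ pointwise, using $|b|^{1/p}\le |b_{\mathfrak s}|^{1/p}+|b_{\mathfrak b}|^{1/p}$.
\begin{itemize}
\item The singular part is controlled by \eqref{adams_est1_}, with bound $c\||b_{\mathfrak s}|^{\frac{1}{\alpha-1}}\|_{E_{1+\epsilon}}^{\frac{\alpha-1}{p}}$.
\item The bounded part contributes $\|b_{\mathfrak b}\|_\infty^{1/p}\lambda^{(-1+\frac1\alpha)\frac1p}$, which is small for $\lambda$ large.
\end{itemize}
For $G_p$, I factor $\nabla A^{-\frac1\alpha}$. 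This is a parabolic Riesz-type transform, bounded on $L^p$ uniformly in $\lambda$. The remaining factor $A^{(-1+\frac1\alpha)\frac1{p'}}b^{\frac1{p'}}$ is handled by \eqref{adams_est2_} and the same splitting.

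Hence $\|G_p\|_{p\to p}\|H_p\|_{p\to p}<1$ once the Morrey norm is below $c_{d,\alpha,p,\epsilon}$ and $\lambda>\lambda_{d,\alpha,p,\epsilon}$. The same bounds hold uniformly in $n$ for $G_p^n,H_p^n$ built from $b_n$, thanks to the choice of mollification in \eqref{b_n}.

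\textbf{Step 2: identification and convergence.} For bounded smooth $b_n$ and $f_n\in L^\infty$, the classical solution satisfies $\eta_n=A^{-1}f_n+A^{-1}\nabla\cdot(b_n\eta_n)$. Writing $b_n\eta_n=b_n^{1/p'}|b_n|^{1/p}\eta_n$ and iterating gives exactly \eqref{FPu_repr31} with $b_n$. One has to check that $|b_n|^{1/p}A^{(-1+\frac1\alpha)\frac1p}$ applied to the right object is legitimate; for bounded $b_n$ this is routine.

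Convergence $\eta_n\to\eta$ in $\mathbb W^{(1-\frac1\alpha)\frac2p,p}_\alpha$ reduces to two facts. First, the resolvents $(1-G^n_pH^n_p)^{-1}$ are uniformly bounded, so the Neumann series converge uniformly in $n$. Second, $G_p^n\to G_p$ and $H^n_p\to H_p$ strongly in $L^p$. For strong convergence I would use dominated convergence on a dense set: $b_n^{1/p'}\to b^{1/p'}$ a.e. with the domination supplied by the Remark after \eqref{b_n2}, and products of strongly convergent, uniformly bounded operators converge strongly.

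Representation \eqref{FPu_repr32} follows from the algebraic identity $(1-GH)^{-1}=1+G(1-HG)^{-1}H$, together with $A^{(-1+\frac1\alpha)\frac1p}A^{-\frac1\alpha+(-1+\frac1\alpha)\frac1{p'}}=A^{-1}$.

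\textbf{Step 3: uniqueness in (ii).} A weak solution in $L^p$ means one for which $b\eta$ makes sense via $H_p$, i.e.\ $|b|^{1/p}\eta$ is controlled. Given such $\eta$, set $g:=|b|^{\frac1p}\eta$, obtained by applying $H_p$ to $A^{(1-\frac1\alpha)\frac1p}\eta$. Testing the equation against $A^{*-1}$ of test functions, $g$ satisfies the fixed-point equation $g=H_pG_pg+H_pA^{-\frac1\alpha+(-1+\frac1\alpha)\frac1{p'}}f$. Since $\|H_pG_p\|<1$, $g$ is unique, and hence so is $\eta$.

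I expect the main obstacle to be the justification of this testing step: showing that an arbitrary $L^p$ weak solution has $A^{(1-\frac1\alpha)\frac1p}\eta\in L^p$, so that the fixed-point equation makes sense. I would build this into the definition of the solution class.

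\textbf{Step 4: the Cauchy problem (iii).} It suffices to check \eqref{f_cond} for $f=\delta_{t=0}\delta_x$. The kernel of $A^{-\gamma/2}$ with $\gamma/2=\frac1\alpha+(1-\frac1\alpha)\frac2{p'}$ is $q_\gamma(t,\cdot-x)$, which must lie in $L^p$. By \eqref{ul_bounds}, $\|q_\gamma(t,\cdot)\|_{L^p_x}\sim t^{\frac\gamma2-1-\frac d\alpha(1-\frac1p)}$, so the requirement is $p(\frac\gamma2-1-\frac d\alpha\frac1{p'})>-1$. Near $p=1$ one has $p'\to\infty$, so $\frac\gamma2\to\frac1\alpha$ and the requirement tends to $\frac1\alpha>0$; thus it holds for $p$ close to $1$. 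I would verify it precisely in the range $p<\frac{d+1}{d}$ by direct computation. Uniqueness then follows from (ii), since weak solutions of the Cauchy problem extended by zero to $t<0$ solve \eqref{FP3} with this $f$.
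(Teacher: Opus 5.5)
For (i) and (ii) your plan is the paper's. Boundedness of $G_p$, $H_p$ comes from Proposition~\ref{prop1} plus the parabolic Riesz transform. The strong convergence $G_p(b_n)\to G_p(b)$, $H_p(b_n)\to H_p(b)$ comes from the essentially monotone approximation. Uniqueness comes from $\|G_p\|_{p\to p}\|H_p\|_{p\to p}<1$.

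In your notation $A=\lambda+\partial_t+(-\Delta)^{\frac{\alpha}{2}}$, $A^*=\lambda-\partial_t+(-\Delta)^{\frac{\alpha}{2}}$, the two uniqueness arguments compare as follows.
\begin{itemize}
\item The paper tests the difference $v=A^{(1-\frac{1}{\alpha})\frac{1}{p}}(\eta-\eta_\star)$ against the single function $\varphi=A^{*\,-1+(1-\frac{1}{\alpha})\frac{1}{p}}[v|v|^{p-2}]$. This gives $(1-\|H_p\|_{p\to p}\|G_p^*\|_{p'\to p'})\|v\|_p^p\le 0$.
\item You test against all $\varphi=A^{*\,-1+(1-\frac{1}{\alpha})\frac{1}{p}}\psi$, $\psi\in L^{p'}$. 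This gives the identity $v=G_pH_pv+A^{-\frac{1}{\alpha}+(-1+\frac{1}{\alpha})\frac{1}{p'}}f$ in $L^p$.
\end{itemize}
The two are equivalent. Your version also gives the existence half of (ii) directly, by pairing the identity defining $\eta$ with $A^{*\,1-(1-\frac{1}{\alpha})\frac{1}{p}}\varphi$. You should say this explicitly, since the paper instead passes to the limit in the weak formulation for $\eta_n$.

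The step that actually needs justification is not the regularity of $\eta$; that is part of the definition, cf.~\eqref{sob_hyp}. It is the extension of the weak identity from $\varphi\in\mathcal S(\mathbb R^{1+d})$ to your non-Schwartz test functions. The paper does this first, using $H_p\in\mathcal B(L^p)$, $G_p^*\in\mathcal B(L^{p'})$ and \eqref{f_cond}, which yields \eqref{weak_def3}.

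Also note that $(1-GH)^{-1}=1+G(1-HG)^{-1}H$ produces \eqref{FPu_repr32} with a plus sign in front of the second term. The minus sign in the display is a slip in the statement; compare \eqref{u_repr}, where $(1+QR)^{-1}$ correctly gives a minus.

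For (iii) your route differs. The paper embeds $\mathbb{W}_\alpha^{\frac{2}{\alpha}-(-1+\frac{1}{\alpha})\frac{2}{p'},p'}$ into $C_\infty(\mathbb R^{1+d})$ by the parabolic Sobolev embedding, valid for $p<\frac{d+1}{d}$, and then dualizes. You instead compute $\|A^{-\gamma/2}(\delta_{t=0}\delta_x)\|_p$ from the two-sided kernel bounds \eqref{ul_bounds}. This works, and it even shows the range is sharp for this datum.

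However, your exponent is wrong. Condition \eqref{f_cond} means $A^{-\gamma/2}f\in L^p$ with $\frac{\gamma}{2}=\frac{1}{\alpha}+(1-\frac{1}{\alpha})\frac{1}{p'}$; the factor $2$ belongs to the Bessel index $\gamma$, not to $\frac{\gamma}{2}$. With this value, $\frac{\gamma}{2}-1=-(1-\frac{1}{\alpha})\frac{1}{p}$. Your integrability condition $p(\frac{\gamma}{2}-1)-\frac{d}{\alpha}(p-1)>-1$ then reduces to $d(p-1)<1$, which is exactly $p<\frac{d+1}{d}$. With your $\frac{2}{p'}$ you would be verifying membership in a strictly larger space (the computation gives $p<\frac{d-\alpha+2}{d-\alpha+1}$), and that does not yield \eqref{f_cond}.
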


The following definition of the weak solution to PDE \eqref{FP3} is obtained from \eqref{FP3} by formally multiplying it by a test function $\varphi$ and integrating by parts:

	\begin{definition*}
	\label{def_weak_sol}
			Let $1<p<\infty$. An $L^p$ weak solution to \eqref{FP3} is a Borel measurable function  $\eta:\mathbb R^{1+d} \rightarrow \mathbb R$ having regularity 
			\begin{equation}
			\label{sob_hyp}
			\eta \in \mathbb{W}_\alpha^{(1-\frac{1}{\alpha})\frac{2}{p},p}
\end{equation}
			such that the following identity holds:
	    \begin{align}
		&\langle
		(\lambda+\partial_t+(-\Delta)^{\frac{\alpha}{2}})^{(1-\frac{1}{\alpha})\frac{1}{p}} \eta,
		(\lambda-\partial_t+(-\Delta)^{\frac{\alpha}{2}})^{1-(1-\frac{1}{\alpha})\frac{1}{p}} \varphi
		\rangle \notag \\
		&\quad+
		\langle
		H_p	(\lambda+\partial_t+(-\Delta)^{\frac{\alpha}{2}})^{(1-\frac{1}{\alpha})\frac{1}{p}}  
		\eta, 
		-G^\ast_p
		(\lambda-\partial_t+(-\Delta)^{\frac{\alpha}{2}})^{1-(1-\frac{1}{\alpha})\frac{1}{p}}   \varphi
		\rangle=
		\langle
 f,
 \varphi
		\rangle \quad \forall\, \varphi \in \mathcal S(\mathbb R^{1+d}), \label{weak_def3_}
	\end{align}
	where the operators $H_p$, $G_p$ are from Theorem \ref{thm2}.

	\end{definition*}

	By dealing with operators $\pm \partial_t + (-\Delta)^{\frac{\alpha}{2}}$, we consider time as another variable: in the analysis of parabolic equations one encounters other arguably non-physical constructions involving the time variable, such as complex time when dealing with holomorphic semigroups. We also consider Morrey norm $\|\cdot\|_{E_{1+\epsilon}}$ where $t$ and $x$ play equal roles in  scaling. This, however, appears to be restrictive for further development of Theorems \ref{thm1}, \ref{thm2}, e.g.\,applications to McKean-Vlasov equations. It would be reasonable to consider more general anisotropic Morrey norms; such norms are discussed in \cite{Kr2}.

	\bigskip
	
\section{Some applications to particle systems}
\label{particle_sect}

Let us make a few comments, rather for illustration purposes, regarding applications of our results to the finite particle system \eqref{syst1} and to the McKean-Vlasov PDE behind \eqref{mv}. Regarding particle system \eqref{syst1}, Theorems \ref{thm1} and \ref{thm2} require a modification -- unless $\alpha=2$ -- namely, replacing the unperturbed operator by a quasi-isotropic $\alpha$-stable generator
$$
\sum_{i=1}^N (-\Delta)^{\frac{\alpha}{2}}_{x^i},
$$
acting on  $x=(x^1,\dots,x^N) \in \mathbb R^{dN}$. To this end, we can either use our proofs but employing a variant of Proposition \ref{prop1} for the heat kernel $$e^{-t\sum_{i=1}^N (-\Delta)^{\frac{\alpha}{2}}_{x^i}}(x-y)=\prod_{i=1}^N e^{-t (-\Delta)^{\frac{\alpha}{2}}_{x^i}}(x^i-y^i),$$ or follow the approach of \cite{KM}, i.e.\,the elliptic argument using estimates for weakly form-bounded drifts obtained via Stroock-Varopoulos inequalities. We will pursue the latter, taking advantage of its somewhat more abstract form. 

\begin{theorem}
\label{thm3}

Let the interaction kernel $K:\mathbb R^d \rightarrow \mathbb R^d$ be Borel measurable.
The following are true:

\begin{enumerate}

\item[{\rm (\textit{i})}] {\rm (Finite particle system)} Assume additionally that $K$ is weakly form-bounded with respect to the fractional Laplacian on $\mathbb R^d$, i.e. $|K| \in L^1_{\loc}(\mathbb R^d)$ and
$$
\||K|^{\frac{1}{2}}(\lambda+(-\Delta)^{\frac{\alpha}{2}})^{-\frac{1}{2}\frac{\alpha-1}{\alpha}}\|_{L^2(\mathbb R^d) \rightarrow L^2(\mathbb R^d)} \leq \sqrt{\delta}
$$
for some $\lambda>0$. For instance, $K$ satisfies the elliptic Morrey class condition $$|K|^{\frac{1}{\alpha-1}} \in M_{1+\epsilon}$$
with sufficiently small elliptic Morrey  norm (see \eqref{morrey_e}). Then there exists a constant $C_{d,N,\alpha}$ such that if $\delta<C_{d,N,\alpha}$, then there exist (unique, see below) probability measures $\{\mathbb P_{x}\}_{x \in \mathbb R^{dN}}$ on the canonical space of c\`{a}dl\`{a}g paths $\mathbf{D}=\mathbf{D}_{[0,\infty[}$ that deliver a weak solution to the particle system
\begin{equation}
\label{syst1_}
X_t^i=x^i-\frac{1}{N}\sum_{j=1, j \neq i}^N \int_0^t K(X_s^i-X_s^j)ds+Z_t^i,
\end{equation}
where $Z_t^i$ are independent $\alpha$-stable processes, and determine a strongly continuous Feller semigroup via
$$
e^{-t\Lambda}f(x):=\mathbb E_{\mathbb P_x} f(X_t^1,\dots,X_t^N), \quad f \in C_\infty(\mathbb R^{dN}),
$$
where $\Lambda$ is an appropriate operator realization of $\sum_{i=1}^N (-\Delta)^{\frac{\alpha}{2}}_{x^i}+\frac{1}{N}\sum_{i=1}^N \sum_{j=1, j \neq i}^N K(x^i-x^j) \cdot \nabla_{x^i}$ on $C_\infty$.

\smallskip

\item[{\rm (\textit{ii})}] {\rm (A priori Sobolev regularity for McKean-Vlasov equation)} Assume that $K$ satisfies the elliptic Morrey class condition $$|K|^{\frac{1}{\alpha-1}} \in M_{1+\epsilon}$$
with sufficiently small Morrey norm. Then, whenever $1<p<\frac{d+1}{d}$, for every $x \in \mathbb R^d$, the solution to the Cauchy problem for the McKean-Vlasov equation
\begin{align*}
\left\{
\begin{array}{l}
\lambda w +\partial_t w +(-\Delta)^{\frac{\alpha}{2}} w - {\rm div\,}\big((K \star w)w\big)=0, \\
w|_{t=0}=\delta_x,
\end{array}
\right.
\end{align*}
satisfies $w \in \mathbb{W}_\alpha^{\frac{\alpha-1}{\alpha}\frac{2}{p},p} \cap L^\infty(\mathbb R_+,L^1(\mathbb R^d))$ and
\begin{align}
\label{mild}
&(\lambda+\partial_t +(- \Delta)^{\frac{\alpha}{2}})^{\frac{\alpha-1}{\alpha}\frac{1}{p}} w\notag\\
&\quad = (\lambda+\partial_t +(- \Delta)^{\frac{\alpha}{2}})^{-1+\frac{\alpha-1}{\alpha}\frac{1}{p}}\delta_{t=0}\delta_x - \nabla (\lambda+\partial_t +(- \Delta)^{\frac{\alpha}{2}})^{-1+\frac{\alpha-1}{\alpha}\frac{1}{p}}[(K \star w)w].
\end{align}
 Here we assume additionally that $K$ is smooth to ensure the existence and uniqueness of $w$, but the Sobolev norm $\|w\|_{\mathbb{W}_\alpha^{\frac{\alpha-1}{\alpha}\frac{2}{p},p} }$ is independent of the smoothness of $K$.
 \end{enumerate}
 
\end{theorem}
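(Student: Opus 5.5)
Theorem \ref{thm3} has two parts, and I would prove them by separate reductions.

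\emph{Part (i).} The idea is to regard the particle system \eqref{syst1_} as a single SDE in $\mathbb R^{dN}$. Its drift is $b=(b^i)_{i=1}^N$ with $b^i(x)=\frac1N\sum_{j\neq i}K(x^i-x^j)$. The driving operator is $A:=\sum_{i=1}^N(-\Delta)^{\frac{\alpha}{2}}_{x^i}$, and I would use the elliptic weakly form-bounded theory of \cite{KM} with $A$ in place of $(-\Delta)^{\frac{\alpha}{2}}$. That theory uses only the Stroock--Varopoulos inequalities for symmetric Markov generators, together with the weak form-boundedness
\[
\||b|^{\frac12}(\lambda+A)^{-\frac12\frac{\alpha-1}{\alpha}}\|_{2\to2}\le\sqrt{\delta_N}.
\]
Since $A$ is a symmetric Markov generator, the only real task is to verify this bound with $\delta_N\le C_N\delta$.

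For that I would use $|b|\le \frac1N\sum_{i\neq j}|K(x^i-x^j)|$ and the triangle inequality for quadratic forms. This reduces the claim to a single pair $(i,j)$:
\[
\langle |K(x^i-x^j)|\varphi,\varphi\rangle\le c\,\delta\,\langle(\lambda+A)^{\frac{\alpha-1}{\alpha}}\varphi,\varphi\rangle .
\]
Make the linear change of variables $y=x^i-x^j$, $z=x^i+x^j$, keeping the other coordinates. Taking the Fourier transform in all variables, the symbol of $A$ is $\sum_k|\xi^k|^\alpha$. This dominates $c\,|\xi^i-\xi^j|^\alpha$, and $\xi^i-\xi^j$ (up to a factor) is exactly the frequency dual to $y$. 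Therefore $(\lambda+A)^{\frac{\alpha-1}{\alpha}}\ge c\,(\lambda+(-\Delta_y)^{\frac{\alpha}{2}})^{\frac{\alpha-1}{\alpha}}$ as operators, since the symbols are ordered and the map $s\mapsto s^{\frac{\alpha-1}{\alpha}}$ is monotone. Applying the $d$-dimensional hypothesis on $K$ in the $y$ variable, for a.e.\ frozen value of the remaining variables, and then integrating over them, gives the pair estimate.

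Summing over pairs gives $\delta_N\le C_{d,N,\alpha}^{-1}\delta$. \cite{KM} then yields the weak solutions, their uniqueness in the corresponding class, and the Feller semigroup. The Morrey example follows because a small elliptic norm $\||K|^{\frac1{\alpha-1}}\|_{M_{1+\epsilon}}$ implies weak form-boundedness, by the Adams-type elliptic analogue of Proposition \ref{prop1}.

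\emph{Part (ii).} Here I would freeze the nonlinearity. Set $b(t,x):=(K\star w(t))(x)$, where $w$ is the given smooth-$K$ solution, and check that this $b$ satisfies ($\mathbf H$) with Morrey norm controlled independently of the smoothness of $K$. Mass conservation for the smooth problem gives $w\ge0$ and $\|w(t)\|_1\le1$; the factor $\lambda$ only damps the mass, so $\|w(t)\|_1\le 1$ still holds. By convexity (Minkowski) of the Morrey norm under averaging over translates,
\[
\||K\star w(t)|^{\frac1{\alpha-1}}\|_{M_{1+\epsilon}}\le \||K|^{\frac1{\alpha-1}}\|_{M_{1+\epsilon}}
\]
(after raising to the power $\alpha-1$ if $\alpha-1<1$; here I must handle the concavity carefully, likely via the $L^{(1+\epsilon)/(\alpha-1)}$-average formulation). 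A time-independent elliptic Morrey bound then implies the parabolic $E_{1+\epsilon}$ bound, because integrating over $[t,t+\rho^\alpha]$ contributes exactly the factor $\rho^\alpha$ needed in the normalization.

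With this, Theorem \ref{thm2}(iii) applies to the linear equation with drift $b$. Its unique weak solution coincides with $w$, since the smooth $w$ is a weak solution of that class. So $w\in\mathbb W^{\frac{\alpha-1}{\alpha}\frac2p,p}_\alpha$, with norm bounded only through the Morrey norm of $K$. Finally, \eqref{mild} is the rearranged Duhamel identity \eqref{FPu_repr31}/\eqref{FPu_repr32}, namely $\eta=(\lambda+\partial_t+(-\Delta)^{\frac\alpha2})^{-1}(f+{\rm div}(b\eta))$ with $f=\delta_{t=0}\delta_x$, with the fractional power applied to both sides.

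The main obstacle I expect is the Morrey-norm convolution bound when $\frac1{\alpha-1}>1$. The first thing I would try is to use Jensen on $|K\star w|^{q}\le |K|^q\star w$ with $q=\frac{1+\epsilon}{\alpha-1}\ge1$, which is valid because $w(t)$ is a sub-probability density.
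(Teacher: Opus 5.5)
Your part (ii) follows the paper's argument. Your Jensen step $|K\star w|^q\le |K|^q\star w$, with $q=\frac{1+\epsilon}{\alpha-1}$, is equivalent to the paper's use of Minkowski's inequality in $L^q(B_r(x))$. There is also no concavity issue: $\alpha<2$ forces $q>1$. Your check of weak form-boundedness of the $N$-particle drift in part (i) is correct too. The paper does it more directly, by freezing every coordinate except $x^i$ and using $A\ge(-\Delta)^{\frac\alpha2}_{x^i}$.

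The gap is your claim that the argument of \cite{KM} needs only the Stroock--Varopoulos inequalities and weak form-boundedness. Those two ingredients control only the gradient-free operator $|b|^{\frac1p}(\lambda+A)^{-1+\frac1\alpha}|b|^{\frac1{p'}}$ on $L^p$. The Neumann series in \cite{KM} needs, for large $p$ (which is what later gives the Feller property),
\[
\|b^{\frac1p}\cdot\nabla(\lambda+A)^{-1}|b|^{\frac1{p'}}\|_{L^p(\mathbb R^{dN})\to L^p(\mathbb R^{dN})}<1 .
\]
Passing from the first estimate to the second requires the pointwise kernel domination
\[
|\nabla_x(\lambda+A)^{-1}(x-y)|\le c_{d,N,\alpha}\,(\lambda+A)^{-1+\frac1\alpha}(x-y),\qquad x\neq y .
\]
Because the weights act as positive multiplication operators, this domination transfers to operator norms. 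When $p\neq 2$ you cannot split off the gradient as a bounded Fourier multiplier, since it sits between $b^{\frac1p}$ and $|b|^{\frac1{p'}}$.

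For the isotropic $(-\Delta)^{\frac\alpha2}$ this kernel bound is classical. However, $A=\sum_i(-\Delta)^{\frac\alpha2}_{x^i}$ is not rotation invariant, and its heat kernel is not comparable to the $dN$-dimensional isotropic stable kernel, so the bound has to be proved. The paper does this in step a) of its proof:
\begin{itemize}
\item It factorizes $e^{-tA}(x)=\prod_i e^{-t(-\Delta)^{\frac\alpha2}}(x^i)$.
\item It uses the $d$-dimensional estimate $|\nabla_z e^{-t(-\Delta)^{\frac\alpha2}}(z)|\le K t^{-\frac1\alpha}e^{-t(-\Delta)^{\frac\alpha2}}(z)$, based on \cite{BJ}.
\item Together these give $|\nabla_x e^{-tA}(x)|\le Ct^{-\frac1\alpha}e^{-tA}(x)$.
\item Integrating against $e^{-\lambda t}\,dt$ and using $\Gamma(1-\frac1\alpha)(\lambda+A)^{-1+\frac1\alpha}=\int_0^\infty e^{-\lambda t}t^{-\frac1\alpha}e^{-tA}\,dt$ yields the domination.
\end{itemize}
Without this step, your appeal to \cite{KM} with $A$ in place of $(-\Delta)^{\frac\alpha2}$ is not justified.
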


Theorem \ref{thm3}(\textit{i}) expands the class of admissible singularities of the interaction kernel in \eqref{syst1_}, but it does not yet settle the question of the weak well-posedness of \eqref{syst1_} since the condition on the interaction strength $\delta$ depends on $N$. 

Theorem \ref{thm3}(\textit{ii}) stays at the a priori level and exploits the fact 
that
\begin{equation}
\label{conv_incl}
K \star w \in E_{1+\epsilon}
\end{equation}
(via, basically,  Young's inequality for Morrey spaces),
so Theorem \ref{thm2} applies. In fact, this was done in \cite{D1}, but we include the details for the reader's convenience. Theorem \ref{thm2} and \eqref{conv_incl} open up a way for launching the contraction principle to establish existence and uniqueness of the mild solution to the a posteriori ($K$ no longer smooth) McKean-Vlasov equation, i.e.\,such that identity \eqref{mild} holds.
However, here we run into some limitations of our choice of Morrey class that treats time $t$ and the spatial variable $x$ in the same way, modulo the adjustment of scaling. We plan to address this elsewhere.

\begin{remark}[On the critical strength of attraction] 
\label{attr_strength_rem}

1.~In the Brownian case $\alpha=2$, $d \geq 3$, one can prove weak existence, as well as some uniqueness results and the strong Markov property, for the particle system \eqref{syst1_} with form-bounded interaction kernel (Table \ref{tab1}) for the strength of interaction $\delta$ going up to (but excluding) the first blow-up threshold, i.e.\,before the particles start to collide in finite time with positive probability and the PDE theory fails, see \cite{D5,KiS2}. This is done there using De Giorgi's method carried out in $L^p$ where $p$ is chosen as a function of the proximity of $\delta$ to the critical threshold. 

\smallskip

2.~When $d=2$ and $K(x)=\kappa x|x|^{-2}$, some PDE results for the  particle system \eqref{syst1_}, including heat kernel bounds, can still be proved, but, at the moment,  the admissible strength of attraction degenerates, although very slowly, to zero as $N \rightarrow \infty$ \cite{BK}. The probabilistic methods cover all strengths of attraction \cite{FJ}; of course, they focus on well-posedness results of different character.

\smallskip

3.~In the case $1<\alpha \leq 2$ and $N=2$, one has a PDE theory for the strength of attraction \textit{equal} to the first blow-up threshold  -- this requires working in the hyperbolic Orlicz space \cite{D6}.

\end{remark}

\bigskip

\section{Rellich from Morrey}
\label{adams_est_sect}

The proofs of Theorems \ref{thm1}, \ref{thm2} use the following proposition as a means to verify the Rellich-type perturbation estimate \eqref{main_est} -- it is needed to construct the Duhamel series in Theorem \ref{thm2} and in the proof of Theorem \ref{thm1}.

\begin{proposition}
\label{prop1}
Assume that the Borel measurable vector field $b:\mathbb R^{1+d } \rightarrow \mathbb R^d$ satisfies  {\rm($\mathbf{H}$)}.
Then, for every $1<p<\infty$, for all $\lambda>0$,
\begin{align}
\||b|^{\frac{1}{p}}(\lambda  \pm \partial_t  + & (- \Delta)^{\frac{\alpha}{2}} )^{(-1+\frac{1}{\alpha})\frac{1}{p}}\|_{L^p(\mathbb{R}^{1+d}) \rightarrow L^p(\mathbb{R}^{1+d})} \notag \\
& \leq c_{d,\alpha,p,\epsilon}\||b_{\mathfrak s}|^{\frac{1}{\alpha-1}}\|^{\frac{\alpha-1}{p}}_{E_{1+\epsilon}} + c \lambda^{(-1+\frac{1}{\alpha})\frac{1}{p}}\|b_{\mathfrak {b}} \|_\infty^{\frac{1}{p}}, \label{adams_est1}
\end{align}
and, by duality,
\begin{align}
\|(\lambda \pm \partial_t  + & (- \Delta)^{\frac{\alpha}{2}} )^{(-1+\frac{1}{\alpha})\frac{1}{p'}}|b|^{\frac{1}{p'}}\|_{L^p(\mathbb{R}^{1+d}) \rightarrow L^p(\mathbb{R}^{1+d})} \notag \\
& \leq c_{d,\alpha,p',\epsilon}\||b_{\mathfrak s}|^{\frac{1}{\alpha-1}}\|^{\frac{\alpha-1}{p'}}_{E_{1+\epsilon}} + c \lambda^{(-1+\frac{1}{\alpha})\frac{1}{p'}}\|b_{\mathfrak {b}} \|_\infty^{\frac{1}{p'}}. \label{adams_est2}
\end{align}
\end{proposition}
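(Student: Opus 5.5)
Write $V:=|b|^{\frac1p}$, $\mu:=(1-\frac1\alpha)\frac1p\in(0,1)$, and $\mathcal R_\mu:=(\lambda\pm\partial_t+(-\Delta)^{\frac\alpha2})^{-\mu}$. The first step splits the drift. Since $|b|^{\frac1p}\le |b_{\mathfrak s}|^{\frac1p}+|b_{\mathfrak b}|^{\frac1p}$, the bounded part contributes
$$\|b_{\mathfrak b}\|_\infty^{\frac1p}\,\|\mathcal R_\mu\|_{p\to p}\le \|b_{\mathfrak b}\|_\infty^{\frac1p}\lambda^{-\mu},$$
because $\mathcal R_\mu$ has a positive kernel with $e^{-\lambda t}$ factor and total mass $\lambda^{-\mu}$. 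So it suffices to treat $b=b_{\mathfrak s}$ with $\lambda\ge 0$; taking $\lambda=0$ is harmless by kernel monotonicity. By \eqref{ul_bounds}, the operator $\mathcal R_\mu$ is dominated pointwise by the positive operator $\mathcal I_\gamma$ with kernel $C\,p_\gamma(t-s,x-y)$, where $\gamma=2\mu$. The estimate \eqref{adams_est2} then follows from \eqref{adams_est1} by duality: the adjoint of $V\mathcal R_\mu$ on $L^p$ is $\mathcal R_\mu^{*}V$ on $L^{p'}$, with $\mathcal R^*_\mu$ corresponding to $\mp\partial_t$. Applying \eqref{adams_est1} with $p$ replaced by $p'$ gives exactly \eqref{adams_est2}. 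So I only need the Adams-type trace inequality
$$\|V\,\mathcal I_\gamma f\|_p\le c\,\|v\|_{E_{1+\epsilon}}^{\frac{\alpha-1}{p}}\|f\|_p,\qquad v:=|b|^{\frac1{\alpha-1}},\quad V^p=v^{\alpha-1}.$$

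The second step is scaling bookkeeping, to see that this is the right Morrey condition. In $\alpha$-parabolic geometry the homogeneous dimension is $d+\alpha$, and $p_\gamma$ is a parabolic Riesz kernel of order $\gamma\alpha/2=\mu\alpha=\frac{\alpha-1}{p}$. The weight $W:=V^p=v^{\alpha-1}$ has Morrey regularity
$$\rho^{\alpha-1}\Big(\rho^{-d-\alpha}\int_{C_\rho}W^{q}\Big)^{\frac1q}\le \|v\|^{\alpha-1}_{E_{1+\epsilon}},\qquad q=\tfrac{1+\epsilon}{\alpha-1}>1$$
(for $\alpha<2$ this uses Hölder; for $\alpha$ near $2$ I would shrink $\epsilon$). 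This is precisely the Fefferman–Phong/Adams condition for the trace inequality $\int W|\mathcal I_\beta f|^p\le C\|f\|_p^p$ with $\beta p=\alpha-1$, in a space of homogeneous type with the quasi-metric $|t-s|^{1/\alpha}+|x-y|$. Note that the kernel $p_\gamma$ has only polynomial tails, so it is not compactly supported. Still, it is dominated by the genuine parabolic Riesz potential $\rho(z,z')^{\beta-(d+\alpha)}$ (restricted to $t>s$), since $t^{\gamma/2-1}(t^{-d/\alpha}\wedge t|x|^{-d-\alpha})\lesssim (t^{1/\alpha}+|x|)^{\alpha\gamma/2-d-\alpha}$. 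I will check this in the two regimes $|x|\lessgtr t^{1/\alpha}$.

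The third step, which is the main one, is to prove the trace inequality for Riesz potentials on a space of homogeneous type with a Morrey weight $W\in L^{q}$-Morrey, $q>1$. My approach will be the Pérez / Sawyer–Wheeden route. First, use a dyadic (Christ) grid, plus finitely many shifted grids, to dominate $\mathcal I_\beta f$ pointwise by dyadic sums $\sum_Q |Q|^{\beta/(d+\alpha)}\langle f\rangle_Q\mathbf 1_Q$. Second, note that the "$A_\infty$-bump" condition $\rho^{\beta p}\,(\rho^{-d-\alpha}\int_Q W^{q})^{1/q}\le M$ with $q>1$ implies, via Pérez's theorem (or Sawyer testing, using the bump to verify the testing condition), that $\|\mathcal I_\beta f\|_{L^p(W)}^p\le C M\|f\|_p^p$. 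This yields the power $\|v\|_{E}^{(\alpha-1)/p}$ after taking the $p$-th root. Verifying the testing condition with the $q>1$ bump via Hölder and a Carleson-type sum over dyadic subcubes is the delicate part. If needed, I would instead use Hedberg's trick: first prove the pointwise bound $|\mathcal I_\beta f|\lesssim (M f)^{1-\beta p/(d+\alpha)}\,(\ldots)$, and then apply the Chiarenza–Frasca argument, $W\le (M W^{q})^{1/q}$ with $(MW^q)^{1/q}\in A_1$ controlled by the Morrey norm, together with weighted boundedness of the maximal function. I expect this part, namely carrying Adams' elliptic argument over to the one-sided parabolic kernel with its polynomial tails, to be the main obstacle. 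The rest is bookkeeping, and the constant depends only on $d,\alpha,p,\epsilon$.
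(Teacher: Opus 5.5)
Your reductions match the paper's: you split off $b_{\mathfrak b}$ with the bound $\lambda^{-\mu}$, you dominate the kernel by $p_\gamma$ via \eqref{ul_bounds}, and you obtain \eqref{adams_est2} from \eqref{adams_est1} with $p'$ and the opposite sign of $\partial_t$. The core trace inequality, however, is handled by a genuinely different route.

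The paper proves it by hand in the Adams--Krylov manner, working throughout with the one-sided kernel $p_\gamma$ and the cylinders $C_\rho$. The steps are:
\begin{enumerate}
\item a Hedberg-type bound $P_\gamma f\le C(M_\beta f)^{\frac{\alpha\gamma}{2\beta}}(Mf)^{1-\frac{\alpha\gamma}{2\beta}}$ (Lemma \ref{lem1});
\item the duality step $\langle |b|u^p\rangle\le\langle P^*_\gamma(|b|u^{p-1}),f\rangle$ followed by H\"older with the splitting $|b|=|b|^{\frac1p+\nu}|b|^{\frac1{p'}-\nu}$, which reduces everything to pointwise bounds on $P^*_\gamma(|b|^{1+\nu p})$ and $P_\gamma(|b|^{\frac1p+\nu p'})$, valid when these powers of $|b|$ are $A_1$ weights;
\item removal of the $A_1$ assumption by replacing $|b|$ with $(\hat M|b|^{q_1})^{1/q_1}$ (Coifman--Rochberg), together with Krylov's bound on its Morrey norm.
\end{enumerate}

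You instead symmetrize. Your bound $p_\gamma(t,x)\lesssim(t^{1/\alpha}+|x|)^{\frac{\alpha\gamma}{2}-d-\alpha}$ is correct in both regimes. You then invoke the Fefferman--Phong / Sawyer--Wheeden / P\'erez trace theorem on the homogeneous space $(\mathbb R^{1+d},|t-s|^{1/\alpha}+|x-y|)$. Two small corrections:
\begin{itemize}
\item To match the hypothesis, say explicitly that a two-sided quasi-ball of radius $\rho$ lies in $C_\rho(t-\rho^\alpha,x)\cup C_\rho(t,x)$. Hence the Morrey norm over quasi-balls is comparable to $\|\cdot\|_{E_{1+\epsilon}}$.
\item Your H\"older / shrink-$\epsilon$ caveat is unnecessary: $W^q=v^{1+\epsilon}$ exactly, and $q=\frac{1+\epsilon}{\alpha-1}>1$ for every $\alpha\in(1,2]$.
\end{itemize}

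As for what each route buys: yours is modular and gets the power $\frac{\alpha-1}{p}$ for free by scaling $W\mapsto tW$. But it outsources the entire central estimate to deep two-weight machinery (dyadic systems, testing or bump theorems, or Muckenhoupt--Wheeden-type estimates). You assert rather than check that this machinery transfers to the parabolic quasi-metric. The paper's argument is essentially elementary, apart from Coifman--Rochberg and Krylov's Morrey lemma, and keeps the constants explicit up to the $A_1$ constant $C_0$.

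Finally, in your fallback sketch the relevant Hedberg inequality is the Morrey version involving $M_\beta$, as in Lemma \ref{lem1}(ii). It is not the Sobolev version with exponent $1-\frac{\beta p}{d+\alpha}$ and a factor $\|f\|_p$.
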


The elliptic counterparts of estimates \eqref{adams_est1_}, \eqref{adams_est2_} are due to Adams \cite[Theorem 7.3]{A} who, in fact, worked in the more general setting of $A_p$ weights. 
Krylov obtained a parabolic analogue of Adams' result in \cite{Kr1} for $\alpha=2$ and for integer powers of $|b|$, as required for his approach to SDEs with singular drift and discontinuous diffusion coefficients. Estimates \eqref{adams_est1_}, \eqref{adams_est2_} were obtained in \cite{D2} by extending the Adams-Krylov argument to handle $|b|^{1/p}$ and also to include $\alpha<2$. 
We use these estimates differently from Krylov. That said, the proof of Proposition \ref{prop1} in \cite{D2} is not  far from the original arguments, so it is fair to refer to these estimates as \textit{non-local Adams-Krylov estimates}.

\medskip

For the reader's convenience, we include the proof of Proposition \ref{prop1} in Appendix \ref{prop1_proof_sect}.

\bigskip

\section{Localization}
\label{loc_sect}

The proof of Theorem \ref{thm1} requires us to  control the  convergence of the Duhamel series in the weighted Lebesgue space $L^p_\rho(\mathbb R^{1+d})=L^p(\mathbb R^{1+d},\rho(x)dxdt)$, 
where
\begin{equation}
\label{rho_def}
\rho(x)=(1+\kappa|x|^2)^{-\frac{\theta}{2}}, \quad \theta>d, \quad \kappa>0.
\end{equation}
This will be achieved via a localized ($\equiv$ weighted) variant of Proposition \ref{prop1}.

1.~In the case $\alpha=2$, one can simply commute the weight $\rho$ with $-\Delta$ and establish a weighted variant of Proposition \ref{prop1} based on the fact that the derivatives of $\rho$ are dominated by the weight itself times a constant that becomes smaller as $\kappa \downarrow 0$, see \cite{D2}. 
In the case $1<\alpha<2$, such direct calculations are problematic.

2.~In \cite{KM}, the authors considered weakly form-bounded drifts $b=b(x)$ (see the definition in the introduction), and, having at hand the estimate 
\begin{equation}
\label{loc_est}
\||b|^{\frac{1}{p}} (\lambda+(-\Delta)^{\frac{\alpha}{2}})^{-1+\frac{1}{\alpha}}|b|^{\frac{1}{p'}}\|_{L^p(\mathbb R^d) \rightarrow L^p(\mathbb R^d)} \leq \delta,
\end{equation}   
obtained its localized counterpart
\begin{equation}
\label{loc_est2}
\||b|^{\frac{1}{p}} (\lambda+(-\Delta)^{\frac{\alpha}{2}})^{-1+\frac{1}{\alpha}}|b|^{\frac{1}{p'}}\|_{L^p(\mathbb R^d,\rho(x)dx) \rightarrow L^p(\mathbb R^d,\rho(x)dx)} \leq C\delta,
\end{equation}
by showing that the fractional Laplacian in the weighted space is still the generator of a symmetric Markov semigroup, so that the Stroock-Varopoulos inequalities needed to deduce \eqref{loc_est} from the weak form-boundedness of $b$ still apply. This led to a rather lengthy proof.
We first show that in the setting of \cite{KM} there is a simple way to deduce \eqref{loc_est2} from \eqref{loc_est} in an important special case $\lambda=0$ (covered e.g.\,by the fractional Hardy inequality \eqref{hardy_frac} and related Hardy-type inequalities). We will use Proposition \ref{loc_prop1} in the proof of Theorem \ref{thm3}(\textit{i}).

\begin{proposition} 
\label{loc_prop1}
Assume that \eqref{loc_est} holds with $\lambda=0$.
Then \eqref{loc_est2} holds for  $p \geq \frac{\theta}{2\alpha-1}$, for all $\lambda>0$.
\end{proposition}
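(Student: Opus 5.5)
Set $T_\lambda:=|b|^{\frac1p}(\lambda+(-\Delta)^{\frac{\alpha}{2}})^{-1+\frac1\alpha}|b|^{\frac1{p'}}$. The plan is to transfer the unweighted bound \eqref{loc_est} at $\lambda=0$ to the weighted space. The basic identity is
$$\|T_\lambda\|_{L^p_\rho\to L^p_\rho}=\|\rho^{\frac1p}T_\lambda\rho^{-\frac1p}\|_{L^p\to L^p},$$
so it suffices to bound the conjugated operator $\rho^{\frac1p}T_\lambda\rho^{-\frac1p}$ on $L^p(\mathbb R^d)$.

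First, we reduce to $\lambda=0$. The kernel of $(\lambda+(-\Delta)^{\frac{\alpha}{2}})^{-1+\frac1\alpha}$ is
$$\Gamma(1-\tfrac1\alpha)^{-1}\int_0^\infty e^{-\lambda t}t^{-\frac1\alpha}e^{-t(-\Delta)^{\frac\alpha2}}(x-y)\,dt,$$
which is positive and pointwise dominated by the $\lambda=0$ Riesz-type kernel $c|x-y|^{-d+\alpha-1}$. Hence $0\le T_\lambda\le T_0$ pointwise as positive operators, and it is enough to prove the weighted bound for $T_0$. The kernel of $T_0$ is
$$c\,|b(x)|^{\frac1p}\,|x-y|^{-d+\alpha-1}\,|b(y)|^{\frac1{p'}}.$$

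Second, we handle the conjugation. Since $\rho^{-1}$ grows polynomially, the task is to control the ratio $\rho(x)^{\frac1p}\rho(y)^{-\frac1p}$. Peetre's inequality gives
$$\frac{\rho(x)}{\rho(y)}\le C(1+\sqrt\kappa|x-y|)^{\theta}.$$
On the region $\sqrt\kappa|x-y|\le 1$ the ratio is bounded by a constant, so that part of the kernel is dominated by a constant times the kernel of $T_0$, and \eqref{loc_est} gives the bound $C\delta$. On the region $\sqrt\kappa|x-y|>1$ we use $\rho(x)^{\frac1p}\le 1$ and bound the remaining kernel by
$$|b(x)|^{\frac1p}\,|x-y|^{-d+\alpha-1}\,\kappa^{\frac{\theta}{2p}}|x-y|^{\frac\theta p}\,\rho(y)^{-\frac1p}|b(y)|^{\frac1{p'}}.$$
We cannot simply absorb $|x-y|^{\theta/p}$ into a Riesz kernel with a smaller singularity, because \eqref{loc_est} is stated only for the exponent $-1+\frac1\alpha$. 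The condition $p\ge\frac{\theta}{2\alpha-1}$, equivalently $\frac\theta p\le 2\alpha-1$, signals how to proceed. I would instead split symmetrically, so that $\rho(x)^{\frac1p}/\rho(y)^{\frac1p}$ on the far region is dominated by $\bigl[(1+|x|)/(1+|y|)\bigr]^{\theta/p}$, and then use that the Riesz kernel at distance $|x-y|\ge\kappa^{-1/2}$ is bounded by the product of the kernel of $T_0$ and the small factor $(\sqrt\kappa|x-y|)^{-(2\alpha-1)+\ldots}$. More concretely, the plan is to show pointwise that
$$\rho(x)^{\frac1p}\,|x-y|^{-d+\alpha-1}\,\rho(y)^{-\frac1p}\le C\,|x-y|^{-d+\alpha-1}$$
whenever $\frac\theta p$ is at most the available decay margin. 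That margin should come from comparing with the $\lambda>0$ kernel, which decays like $|x-y|^{-d-\alpha}$ at infinity (see \eqref{ul_bounds}) instead of $|x-y|^{-d+\alpha-1}$. This gains exactly $|x-y|^{-(2\alpha-1)}$ at distances beyond $\lambda^{-1/\alpha}$, which is precisely the threshold $\theta/p\le 2\alpha-1$.

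So the refined step is as follows. For $\lambda>0$ the kernel $k_\lambda(x-y)$ satisfies
$$k_\lambda(z)\le C\,|z|^{-d+\alpha-1}\min\bigl(1,(\lambda^{\frac1\alpha}|z|)^{-(2\alpha-1)}\bigr).$$
Multiplying by $\rho(x)^{\frac1p}\rho(y)^{-\frac1p}\le C(1+\sqrt\kappa|z|)^{\theta/p}$ and using $\theta/p\le 2\alpha-1$, we get a kernel bounded by $C(\kappa,\lambda)\,|z|^{-d+\alpha-1}$. By positivity, the conjugated operator is then dominated by $C\,T_0$, which has norm at most $C\delta$ by \eqref{loc_est}. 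The main obstacle, which I would check carefully, is the decay estimate for the kernel of $(\lambda+(-\Delta)^{\frac{\alpha}{2}})^{-1+\frac1\alpha}$ for $|z|\ge\lambda^{-1/\alpha}$. Splitting the $t$-integral at $t=|z|^\alpha$, the range $t\le|z|^\alpha$ contributes about $|z|^{-d-\alpha}\int_0^{|z|^\alpha}t^{1-\frac1\alpha}e^{-\lambda t}\,dt\lesssim|z|^{-d-\alpha}\lambda^{-2+\frac1\alpha}$. The range $t\ge|z|^\alpha$ is exponentially small in $\lambda|z|^\alpha$. Together these give the claimed gain of $|z|^{-(2\alpha-1)}$, with a constant that depends on $\lambda$ and $\kappa$ but not on $b$.
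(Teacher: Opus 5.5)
Your refined step is the paper's proof: the kernel of $(\lambda+(-\Delta)^{\frac{\alpha}{2}})^{-1+\frac{1}{\alpha}}$ is bounded by $|z|^{-d+\alpha-1}$ near the diagonal and by $|z|^{-d-\alpha}$ far from it. Conjugation by $\rho^{\frac1p}$ costs at most $(1+\kappa|z|^2)^{\frac{\theta}{2p}}$, and $\frac{\theta}{p}\le 2\alpha-1$ lets you dominate the weighted kernel by a multiple of the Riesz kernel, hence by $C\,T_0$, whose norm is at most $C\delta$. Your opening reduction (``it is enough to prove the weighted bound for $T_0$'') is a dead end, since that bound is generally false: the kernel of $T_0$ has no extra decay to absorb the weight. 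You should drop it and go straight to the refined step, as the paper does.
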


\begin{proof}
	Without loss of generality, we take $\lambda=1$ in the following proof.
We first note that for $\gamma>0$
 	\begin{align}
		(1+(-\Delta)^{\frac{\alpha}{2}})^{-\frac{\gamma}{2}} (x-y) \notag
		&=\int_0^\infty e^{-t} t^{\frac{\gamma}{2}-1} e^{-t(-\Delta)^{\frac{\alpha}{2}}} (x-y) dt \notag \\
		& (\text{use the standard upper bound on the heat kernel of the fractional Laplacian}) \notag \\
		&\leq C_1 \int_0^\infty e^{-t} t^{\frac{\gamma}{2}-1} (t^{-\frac{d}{\alpha}} \wedge t |x-y|^{-d-\alpha}) dt \notag \\
		&\leq C_2 
		\begin{cases}
			|x-y|^{-d-\alpha}
			\quad  &\text{if} \quad |x-y|> 1\\
			|x-y|^{-d+\frac{\gamma \alpha}{2}}
			\quad  &\text{if} \quad |x-y|\leq 1. \label{frac_bd}
		\end{cases} .
	\end{align}
	Also, note that
	$$
	(1+|y|^2) \leq (1+2|x-y|^2+2|x|^2)\leq 2(1+|x-y|^2)(1+|x|^2), \quad x,y \in \mathbb R^d.
	$$
Therefore, we can estimate the integral kernel of ``weighted'' operator $\rho^{\frac{1}{p}}(1+(-\Delta)^{\frac{\alpha}{2}})^{-1+\frac{1}{\alpha}}\rho^{-\frac{1}{p}}$:
		\begin{align}
		&\big[\rho^{\frac{1}{p}}(1+(-\Delta)^{\frac{\alpha}{2}})^{-1+\frac{1}{\alpha}}\rho^{-\frac{1}{p}}\big](x,y) \notag \\
		& (\text{without loss of generality, $\kappa=1$ in the definition of $\rho$}) \notag \\
		&\leq (1+|x|^2)^{-\frac{\theta}{2p}} (1+(-\Delta)^{\frac{\alpha}{2}})^{-1+\frac{1}{\alpha}} (x-y) (1+|y|^2)^{\frac{\theta}{2p}} \notag \\
		&\leq 2^{\frac{\theta}{2p}}(1+|x-y|^2)^{\frac{\theta}{2p}} (1+(-\Delta)^{\frac{\alpha}{2}})^{-1+\frac{1}{\alpha}} (x-y). \label{expr}
	\end{align}
We now apply the bound \eqref{frac_bd}. That is, if $|x-y| \leq 1$, then the right-hand side of \eqref{expr} is dominated by $|x-y|^{-d+\alpha-1}$ times a generic constant. If $|x-y|>1$, then \eqref{expr} is dominated by $|x-y|^{\frac{\theta}{p}-d-\alpha}$, in which case, if $p$ is chosen so that $\frac{\theta}{p}-\alpha \leq \alpha-1$, then \eqref{expr} $\leq |x-y|^{-d+\alpha-1}$ (times a constant). To summarize, if $p \geq \frac{\theta}{2\alpha-1}$, then
$$
\big[\rho^{\frac{1}{p}}(1+(-\Delta)^{\frac{\alpha}{2}})^{-1+\frac{1}{\alpha}}\rho^{-\frac{1}{p}}\big](x,y) \leq C(-\Delta)^{-\frac{\alpha-1}{2}}(x-y), \quad x,y \in \mathbb R^d,
$$ 
which implies \eqref{loc_est2}. 
\end{proof}

In the proof above we have exploited the fact that the kernel $(\lambda+(-\Delta)^{\frac{\alpha}{2}})^{-\frac{\gamma}{2}}(x-y)$ tends to zero as $|x-y| \rightarrow \infty$  strictly faster  than the corresponding Riesz kernel. This does not work for the parabolic kernel $(\lambda \pm \partial_t + (-\Delta)^{\frac{\alpha}{2}})^{-\frac{\gamma}{2}}(t,x-y)$. 
The Morrey class condition on $|b|$, however, provides some extra flexibility: one can exploit the fact that to localize parabolic Adams' estimates \eqref{adams_est1}, \eqref{adams_est2} one does not need to control the entire integral kernel $(\lambda \pm \partial_t + (-\Delta)^{\frac{\alpha}{2}})^{-\frac{\gamma}{2}}(t,x-y)$. This is a localization argument due to Krylov, which we employ in the proof of Proposition \ref{prop1_weight}. (To be more precise, we extend Krylov's argument from $\alpha=2$, but in the process require an additional condition $\|b_{\mathfrak{s}}\|_{L^1(\mathbb R^{1+d})}<\infty$.)

 Let $Q_r(x) \subset \mathbb R^d$ denote the cube centered at $x$ with edge length $r$. Set
$$A_{\lambda,p}:=\lambda^{-\frac{1}{p'}-(1-\frac{1}{\alpha})\frac{1}{p}} \downarrow 0 \quad \text{ as $\lambda \uparrow \infty$.}$$
The next Proposition \ref{prop1_weight} is used in the proof of Theorem \ref{thm1}. (The latter requires $p>d+1$, and so the condition $p>\frac{\theta}{\alpha}$ in Proposition \ref{prop1_weight} is automatically satisfied.)

\begin{proposition}
\label{prop1_weight}
\label{loc_prop2}
Assume that Borel measurable vector field $b$ satisfies  {\rm($\mathbf{H}_1$)}.
Then, for every $p>\frac{\theta}{\alpha}$, provided that $\theta$  in the definition  \eqref{rho_def}  of weight $\rho$ is chosen so that $d<\theta<d+\alpha$ and $\kappa$ is fixed sufficiently small (that is, depending only on $d$, $\alpha$ and the norms of $b_{\mathfrak b}$ and $b_\mathfrak{s}$), the following are true for all $\lambda>1$:

\begin{enumerate}
\item[{\rm (\textit{i})}]
\begin{align}
\||b|^{\frac{1}{p}}(\lambda \pm \partial_t  + & (- \Delta)^{\frac{\alpha}{2}} )^{(-1+\frac{1}{\alpha})\frac{1}{p}}\|_{L_\rho^p(\mathbb{R}^{1+d}) \rightarrow L_\rho^p(\mathbb{R}^{1+d})} \notag \\
& \leq K_1\sup_{x \in \mathbb R^d}\|\mathbf{1}_{Q_2(x)}|b_{\mathfrak s}|^{\frac{1}{\alpha-1}}\|^{\frac{\alpha-1}{p}}_{E_{1+\epsilon}}  + K_2 
A_{\lambda,p}
 \langle |b_{\mathfrak s}| \rangle^{\frac{1}{p}} + K_3 \lambda^{-\frac{1}{p}(1-\frac{1}{\alpha})}\|b_{\mathfrak b}\|^{\frac{1}{p}}_{L^\infty(\mathbb R^{1+d})} \label{adams_est_weight},
\end{align}

\item[{\rm (\textit{ii})}]
\begin{align}
\|(\lambda \pm \partial_t  + & (- \Delta)^{\frac{\alpha}{2}} )^{(-1+\frac{1}{\alpha})\frac{1}{p'}}|b|^{\frac{1}{p'}}\|_{L_\rho^p(\mathbb{R}^{1+d}) \rightarrow L_\rho^p(\mathbb{R}^{1+d})} \notag \\
& \leq K_1\sup_{x \in \mathbb R^d}\|\mathbf{1}_{Q_2(x)}|b_{\mathfrak s}|^{\frac{1}{\alpha-1}}\|^{\frac{\alpha-1}{p'}}_{E_{1+\epsilon}} + K_2 
A_{\lambda,p'}
 \langle |b_{\mathfrak s}| \rangle^{\frac{1}{p'}} + K_3 \lambda^{-\frac{1}{p'}(1-\frac{1}{\alpha})}\|b_{\mathfrak b}\|^{\frac{1}{p'}}_{L^\infty(\mathbb R^{1+d})}. \label{adams_est2_weight}
\end{align}
for constants $K_i=K_i(d,\alpha,p,\epsilon,\theta,\kappa)$.

\end{enumerate}

\end{proposition}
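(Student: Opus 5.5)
Part (\textit{i}) is proved directly and part (\textit{ii}) follows by duality. The adjoint of an operator on $L^p_\rho$ acts on $L^{p'}_{\rho^{1-p'}}$, so we rerun the same argument with the weight $\rho^{1-p'}=\rho^{-1/(p-1)}$ and exponent $p'$; equivalently, we conjugate by $\rho^{1/p}$. So we set $T:=|b|^{\frac1p}(\lambda\pm\partial_t+(-\Delta)^{\frac\alpha2})^{(-1+\frac1\alpha)\frac1p}$, which has kernel $|b(t,x)|^{1/p}e^{-\lambda(t-s)}q_\gamma(t-s,x-y)$ with $\frac\gamma2=(1-\frac1\alpha)\frac1p$. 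Bounding $T$ on $L^p_\rho$ means bounding $\rho^{1/p}T\rho^{-1/p}$ on unweighted $L^p$.

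The bounded part $b_{\mathfrak b}$ is handled first. Here $|b_{\mathfrak b}|^{1/p}\le\|b_{\mathfrak b}\|_\infty^{1/p}$, and the weighted operator norm of $(\lambda\pm\partial_t+(-\Delta)^{\alpha/2})^{-\gamma/2}$ on $L^p_\rho$ is $\le C\lambda^{-\gamma/2}$. Indeed, $\rho$ depends only on $x$, so the heat semigroup acts on $L^p_\rho(\mathbb R^d)$ with norm $\le e^{\omega t}$, where $\omega$ is small once $\kappa$ is small. This uses $|\rho(y)/\rho(x)|\le C(1+|x-y|)^{\theta}$ together with integrability of $(1+|z|)^{\theta}|z|^{-d-\alpha}$ at infinity, which is exactly where $\theta<d+\alpha$ enters, together with a scaling in $\kappa$. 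Subordination then gives the factor $\lambda^{-\gamma/2}$ for $\lambda>1$. This produces the $K_3$ term.

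The singular part is split into a near and a far region. We cover $\mathbb R^d$ by unit cubes $Q_1(z)$, $z\in\mathbb Z^d$, and split the kernel into the piece with $|x-y|\le1$ and $t-s\le1$, and the remainder.

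\emph{Near part.} On $x\in Q_1(z)$ the kernel only sees $y\in Q_2(z)$ (or a slightly larger cube), and there $\rho(x)\approx\rho(y)\approx\rho(z)$ with constants depending only on $\theta$. The weight therefore factors out cube by cube. In each cube we apply Proposition \ref{prop1} with $b_{\mathfrak s}$ replaced by $\mathbf 1_{Q_2(z)}b_{\mathfrak s}$, using positivity of the kernel to dominate the truncated operator by the full one. Summing the $p$-th powers over $z$ with bounded overlap yields the $K_1\sup_x\|\mathbf 1_{Q_2(x)}|b_{\mathfrak s}|^{\frac1{\alpha-1}}\|^{\frac{\alpha-1}{p}}_{E_{1+\epsilon}}$ term.

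\emph{Far part.} Here the kernel is bounded: $e^{-\lambda(t-s)}q_\gamma\le C e^{-\lambda(t-s)}\min\{(t-s)^{\gamma/2-1-d/\alpha},(t-s)^{\gamma/2}|x-y|^{-d-\alpha}\}$, and it is $\le C$ when $|x-y|\ge1$ or $t-s\ge1$. We estimate $\|\rho^{1/p}|b_{\mathfrak s}|^{1/p}\int k\,\rho^{-1/p}f\|_p$ by H\"older in $(s,y)$. This gives
$$|{\textstyle\int} k\,g|\le\Big(\int k^{p'}\rho(y)^{-p'/p}\,dsdy\Big)^{1/p'}\|\rho^{1/p}g\|_p,$$
and then
$$\|\cdot\|_p^p\le\int|b_{\mathfrak s}(t,x)|\rho(x)\Big(\int k^{p'}\rho^{-p'/p}\Big)^{p/p'}dtdx.$$
We need
$$\sup_x\rho(x)\Big(\int k(t-s,x-y)^{p'}\rho(y)^{-p'/p}dsdy\Big)^{p/p'}\lesssim A_{\lambda,p}^p.$$
The time integral contributes $\int e^{-\lambda p'\tau}\tau^{\dots}d\tau$, giving the power $\lambda^{-1}$ (to the power $p/p'$) times a factor $\lambda^{-\gamma p/2}$ from $t^{\gamma/2}$; this should match $A_{\lambda,p}^p$. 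The spatial integral $\int(1\wedge|x-y|^{-d-\alpha})^{p'}(1+|y|^2)^{\theta p'/(2p)}dy$ converges precisely when $(d+\alpha)p'>d+\theta p'/p$, i.e.\ $p>\theta/\alpha$. Its size is $\lesssim(1+|x|^2)^{\theta p'/(2p)}$, which cancels $\rho(x)$. Summing gives $K_2A_{\lambda,p}\langle|b_{\mathfrak s}|\rangle^{1/p}$, and this is where ($\mathbf H_1$) is used.

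The main obstacle is the far-part bookkeeping. Two things must be checked: that the exponent of $\lambda$ from the time integral (which involves $\tau^{\gamma/2}$ and $\tau^{\gamma/2-1-d/\alpha}$ cut at $\tau\ge1$) really produces $A_{\lambda,p}$ rather than something worse; and that cutting the kernel in both time and space simultaneously leaves a near part genuinely dominated by an unweighted Adams operator applied to the localized drift. If the time-cut region $\{t-s\ge1,|x-y|\le1\}$ causes trouble, we will absorb it into the far part using $\tau^{\gamma/2-1-d/\alpha}\le1$ for $\tau\ge1$.
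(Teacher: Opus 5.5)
Your proposal is correct and follows essentially the paper's proof: unit-cube localization with $\rho$ frozen and Proposition~\ref{prop1} on the near part, H\"older with the off-diagonal kernel bound and $\langle|b_{\mathfrak s}|\rangle<\infty$ on the far part, subordination for $b_{\mathfrak b}$, and duality for (ii). Your far-part worry resolves itself: on your whole far region $e^{-\lambda\tau}q_\gamma(\tau,w)\lesssim e^{-\lambda\tau}\tau^{\gamma/2}(1\wedge|w|^{-d-\alpha})$, so the kernel factorizes and the time integral gives exactly $A_{\lambda,p}^p$.

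A few justifications need fixing.

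\emph{Bounded part.} The weight enters as $\rho^{1/p}$, so the relevant integrand is $(1+|z|)^{\theta/p}|z|^{-d-\alpha}$. Your $(1+|z|)^{\theta}|z|^{-d-\alpha}$ is not integrable at infinity, because $\theta>d\ge 2\ge\alpha$. With the correct power one gets $\|\rho^{1/p}e^{-s(-\Delta)^{\alpha/2}}\rho^{-1/p}\|_{p\to p}\le C(1+s^{\theta/(\alpha p)})$, and this requires exactly $p>\theta/\alpha$. That, not the far part, is where the hypothesis on $p$ is used. Polynomial growth in $s$ suffices after subordination for $\lambda>1$. An exponential bound $e^{\omega s/p}$ with small $\omega$ is also true, but it needs a different argument: Jensen's inequality for the Markov kernel together with $|(-\Delta)^{\frac{\alpha}{2}}\rho|\le\omega\rho$. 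That is where $d<\theta\le d+\alpha$ and the scaling in $\kappa$ really enter.

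\emph{Far-part exponent.} The condition $(d+\alpha)p'>d+\theta p'/p$ is equivalent to $p>(\theta-d)/\alpha$, not to $p>\theta/\alpha$, and it is automatic.

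\emph{Part (ii).} The dual weight grows, so the "rerun" must replace growth cancellation by the decay bound $\int(1\wedge|x-y|^{-d-\alpha})^{p}\rho(y)\,dy\lesssim\rho(x)$. This is the paper's step $2'$, and it does hold since $d<\theta<d+\alpha$.
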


We prove Proposition \ref{prop1_weight} in Section \ref{prop1_weight_proof}.

\medskip

Proposition \ref{prop1_weight} imposes a more  restrictive bound on $p$ from below, due to the worse decay properties of the stable heat kernel at infinity compared to the stable Bessel potential.

\bigskip

\section{Proof of Theorem \ref{thm2}}

Throughout this section, we write
$
A:=(-\Delta)^{\frac{\alpha}{2}}.
$
Assertion (\textit{i}) follows directly from Proposition \ref{prop1} and
\begin{equation}
\label{conv_GH}
	G_p(b_n)\to G_p(b),\quad
	H_p(b_n)\to H_p(b)
	\quad 
	\text{strongly in}\ L^{p}(\mathbb{R}^{1+d})
	\end{equation}
	which, in turn, is a consequence of our choice of approximation $\{b_n\}$ (essentially monotone) and the Dominated Convergence Theorem.

	  Let us prove (\textit{ii}).  \textit{Existence.}
	The fact that $\eta$ constructed in (\textit{i}) is a weak solution follows right away from the convergence 
	$$
	\eta_n \to \eta \quad \text{ in } \mathbb{W}_\alpha^{(1-\frac{1}{\alpha})\frac{2}{p},p}(\mathbb{R}^{1+d}),
	$$
	from (\textit{i}), i.e.
	\begin{align}\label{conv3}
		(\lambda+\partial+A)^{(1-\frac{1}{\alpha})\frac{1}{p}}\eta_n
		\to
		(\lambda+\partial+A)^{(1-\frac{1}{\alpha})\frac{1}{p}}\eta
		\qquad\text{in }L^p(\mathbb{R}^{1+d}),
	\end{align}
	and the convergence \eqref{conv_GH} which we supplement with $G_p^\ast(b_n) \rightarrow G_p^\ast(b)$ in $L^{p'}(\mathbb R^{1+d})$. (Having an explicit representation from (\textit{i}) for the candidate for a weak solution, i.e.\,$\eta$, greatly simplifies the proof of weak existence.)

	\textit{Uniqueness.} First, we note that, given any weak solution $\eta$, in view of the inclusions $H_p \in \mathcal B(L^p)$, $G_p^\ast \in \mathcal B(L^{p'})$ established in Theorem \ref{thm2}, if the right-hand side $f$ satisfies \eqref{f_cond}, then the identity \eqref{weak_def3_} extends to 
			    \begin{align}
		&\langle
		(\lambda+\partial_t+A)^{(1-\frac{1}{\alpha})\frac{1}{p}} \eta,
		(\lambda-\partial_t+A)^{1-(1-\frac{1}{\alpha})\frac{1}{p}} \varphi
		\rangle \notag \\
		&\quad+
		\langle
		H_p	(\lambda+\partial_t+A)^{(1-\frac{1}{\alpha})\frac{1}{p}}  \eta, 
		-G^\ast_p
		(\lambda-\partial_t+A)^{1-(1-\frac{1}{\alpha})\frac{1}{p}}   \varphi
		\rangle\notag\\
		&\quad \quad =
		\langle
		(\lambda+\partial_t+A)^{-1+(1-\frac{1}{\alpha})\frac{1}{p}} f,
		(\lambda-\partial_t +A)^{1-(1-\frac{1}{\alpha})\frac{1}{p}} \varphi
		\rangle \label{weak_def3}
	\end{align}		
		for all test functions 
	$\varphi \in L^{p'}(\mathbb R^{1+d})$ such that 
	$$
	(\lambda-\partial_t +A)^{1-(1-\frac{1}{\alpha})\frac{1}{p}}\varphi \in L^{p'}(\mathbb{R}^{1+d}).
	$$
	Now, let $\eta$ be the weak solution constructed earlier, and let $\eta_\star \in \mathbb{W}_\alpha^{(1-\frac{1}{\alpha})\frac{2}{p},p}(\mathbb{R}^{1+d})$ be another weak solution. Then, by the previous remark, identity \eqref{weak_def3} holds for $\eta_\star$ as well. Therefore, the difference $\eta-\eta_\star$ satisfies
		\begin{align}
		&\langle
		(\lambda+\partial+A)^{(1-\frac{1}{\alpha})\frac{1}{p}} (\eta-\eta_\star),
		(\lambda-\partial+A)^{1-(1-\frac{1}{\alpha})\frac{1}{p}} \varphi
		\rangle \notag \\
		&\quad+
		\langle
		H_p	(\lambda+\partial+A)^{(1-\frac{1}{\alpha})\frac{1}{p}}  (\eta-\eta_\star), 
		-G_p^\ast
		(\lambda-\partial+A)^{1-(1-\frac{1}{\alpha})\frac{1}{p}}   \varphi
		\rangle
		=
		0 \label{weak_diff}
	\end{align}
	for all test functions
	$\varphi \in L^{p'}(\mathbb R^{1+d})$ such that 
	$$
	(\lambda-\partial+
	A
	)^{1-(1-\frac{1}{\alpha})\frac{1}{p}}\varphi \in L^{p'}(\mathbb{R}^{1+d}).
	$$
	Now, we choose in \eqref{weak_diff}
	$$
	\varphi:= (\lambda-\partial +A)^{-1+(1-\frac{1}{\alpha})\frac{1}{p}}
	[(\lambda+\partial +A)^{(1-\frac{1}{\alpha})\frac{1}{p}}(\eta-\eta_\star)|(\lambda+\partial +A)^{(1-\frac{1}{\alpha})\frac{1}{p}}(\eta-\eta_\star)|^{p-2}],
	$$
	where the inclusion $(\lambda-\partial+A)^{1-(1-\frac{1}{\alpha})\frac{1}{p}}\varphi \in L^{p'}(\mathbb{R}^{1+d})$ follows from $\eta, \eta_\star \in \mathbb{W}_\alpha^{(1-\frac{1}{\alpha})\frac{2}{p},p}(\mathbb{R}^{1+d})$,  thus defined $\varphi$ is an admissible test function.
We arrive at
		\begin{align*}
		&\langle
		(\lambda+\partial+A)^{(1-\frac{1}{\alpha})\frac{1}{p}} (\eta-\eta_\star),
		(\lambda+\partial +A)^{(1-\frac{1}{\alpha})\frac{1}{p}}(\eta-\eta_\star)|(\lambda+\partial +A)^{(1-\frac{1}{\alpha})\frac{1}{p}}(\eta-\eta_\star)|^{p-2}
		\rangle\\
		&\quad+
		\langle
		H_p	(\lambda+\partial+A)^{(1-\frac{1}{\alpha})\frac{1}{p}}  (\eta-\eta_\star), 
		-G_p^\ast
		(\lambda+\partial +A)^{(1-\frac{1}{\alpha})\frac{1}{p}}(\eta-\eta_\star)|(\lambda+\partial +A)^{(1-\frac{1}{\alpha})\frac{1}{p}}(\eta-\eta_\star)|^{p-2}
		\rangle \\
		&\quad =
		0.
	\end{align*}
	Observe that the first term coincides with $\|\eta-\eta_\star\|_{\mathbb{W}_\alpha^{(1-\frac{1}{\alpha})\frac{2}{p},p}(\mathbb{R}^{1+d})}^p$, while the absolute value of the second term is bounded from above by $\|H_p\|_{p \rightarrow p} \|G^\ast_p\|_{p' \rightarrow p'}\|\eta-\eta_\star\|^p_{\mathbb{W}_\alpha^{(1-\frac{1}{\alpha})\frac{2}{p},p}}$. 
		 Hence
	 $$
	 (1-\|H_p\|_{p \rightarrow p} \|G^\ast_p\|_{p' \rightarrow p'})
	 \|\eta-\eta_\star
	 \|_{\mathbb{W}_\alpha^{(1-\frac{1}{\alpha})\frac{2}{p},p}}^p\le 0.
	 $$
	By our assumption on the Morrey norm of $b$, 
	$$
	\|H_p\|_{p \rightarrow p} \|G^\ast_p\|_{p' \rightarrow p'} \equiv \|H_p\|_{p \rightarrow p} \|G_p\|_{p \rightarrow p}<1,
	$$
	 which implies $\eta=\eta_\star$. This ends the proof of assertion (\textit{ii}).
	 
	 \medskip
	 
(\textit{iii}) is a consequence of (\textit{ii}) and $\delta_{t=0}\delta_x \in \mathbb{W}_\alpha^{-\frac{2}{\alpha}+(-1+\frac{1}{\alpha})\frac{2}{p'},p}(\mathbb{R}^{1+d})$.
In detail, by the parabolic Sobolev embedding property \eqref{sob_emb0} \eqref{sob_emb}, we have
	$$
	\mathbb{W}_\alpha^{\frac{2}{\alpha}-(-1+\frac{1}{\alpha})\frac{2}{p'},p'}
	(\mathbb{R}^{1+d})
	\hookrightarrow
	C_\infty (\mathbb{R}^{1+d})
	\quad\text{provided}\ 
	p<\frac{d+1}{d}.
	$$
	Hence,
	$$
	\delta_{t=0}\delta_x\in (C_\infty (\mathbb{R}^{1+d}))'
	\hookrightarrow (\mathbb{W}_\alpha^{\frac{2}{\alpha}-(-1+\frac{1}{\alpha})\frac{2}{p'},p'}
	(\mathbb{R}^{1+d}))'
	=\mathbb{W}_\alpha^{-\frac{2}{\alpha}+(-1+\frac{1}{\alpha})\frac{2}{p'},p}
	(\mathbb{R}^{1+d}).
	$$
	\hfill \qed

\bigskip

\section{Proof of Proposition \ref{prop1_weight}}
\label{prop1_weight_proof}

We may establish \eqref{adams_est_weight}, \eqref{adams_est2_weight} separately for the bounded part $b_{\mathfrak b}$ and  the unbounded part $b_{\mathfrak s}$. 
The proof for the unbounded part $b_{\mathfrak s}$ that we now give works for all $1<p<\infty$.
Put for brevity $A:=(-\Delta)^{\frac{\alpha}{2}}$, $\partial:=\partial_t.$ Below $C$ denotes a generic positive constant depending on $d, \alpha, p, \epsilon$ whose value may change from line to line.

(\textit{i}) Fix $x \in \mathbb R^d$. Let $\langle,\rangle$ denote, as before, the integration over $\mathbb R^{1+d}$. We have
\begin{align*}
\|\mathbf{1}_{Q_1(x)}|b_{\mathfrak s}|^{\frac{1}{p}}(\lambda+\partial+A)^{(-1+\frac{1}{\alpha})\frac{1}{p}}f\|_p^p &=
\langle \mathbf{1}_{Q_1(x)}|b_{\mathfrak s}||(\lambda+\partial+A)^{(-1+\frac{1}{\alpha})\frac{1}{p}}f|^p\rangle \\
& \leq 
C\langle \mathbf{1}_{Q_1(x)}|b_{\mathfrak s}||(\lambda+\partial+A)^{(-1+\frac{1}{\alpha})\frac{1}{p}}\mathbf{1}_{Q_2(x)}f|^p\rangle \\ 
&+ C\langle \mathbf{1}_{Q_1(x)}|b_{\mathfrak s}||(\lambda+\partial+A)^{(-1+\frac{1}{\alpha})\frac{1}{p}}\mathbf{1}_{Q^c_2(x)}f|^p\rangle \\
& =: S(x) + R(x).
\end{align*}

Step 1.~By Proposition \ref{prop1},
\begin{equation}
\label{est_S}
S(x) \leq c^p_{d,\alpha,p,\epsilon}\|\mathbf{1}_{Q_1(x)}|b_{\mathfrak s}|^{\frac{1}{\alpha-1}}\|_{E_{1+\epsilon}}^{\alpha-1}\|\mathbf{1}_{Q_2(x)}f\|^p_p.
\end{equation}

Step 2.~On the complement $Q_2^c(x)$  we will use the separation property of the heat kernel of the fractional Laplacian:
$$
R(x) \leq \sup_{(t,y) \in \mathbb R^{1+d}} |\mathbf{1}_{Q_1(x)}(y)\big((\lambda+\partial+A)^{(-1+\frac{1}{\alpha})\frac{1}{p}}\mathbf{1}_{Q^c_2(x)}f\big)(t,y)|^p\, \langle \mathbf{1}_{Q_1(x)}|b_{\mathfrak s}|\rangle,
$$
where, for every $(t,y) \in \mathbb R^{1+d}$, by the upper bound in \eqref{ul_bounds},
\begin{align*}
|\mathbf{1}_{Q_1(x)}(y) & \big((\lambda+\partial+A)^{(-1+\frac{1}{\alpha})\frac{1}{p}} \mathbf{1}_{Q^c_2(x)}f\big)(t,y)| \\
& \leq
C \mathbf{1}_{Q_1(x)}(y)\int_{-\infty}^t\int_{\mathbb R^d} e^{-\lambda (t-s)}(t-s)^{(1-\frac{1}{\alpha})\frac{1}{p}}|y-z|^{-d-\alpha} \mathbf{1}_{Q_2^c(x)}(z) |f(s,z)|dzds \\
& (\text{write } 1=\rho^{-\frac{1}{p}}\rho^{\frac{1}{p}}) \\
& \leq
C
 \mathbf{1}_{Q_1(x)}(y)\biggl(\int_{-\infty}^t\int_{\mathbb R^d} e^{-p'\lambda (t-s)}(t-s)^{(1-\frac{1}{\alpha})\frac{p'}{p}}|y-z|^{-p'(d+\alpha)} \mathbf{1}_{Q_2^c(x)}(z)\rho^{-\frac{p'}{p}}(z)dzds\biggr)^{\frac{1}{p'}} \\
& \qquad \qquad \qquad \qquad \qquad  \qquad \qquad \qquad \cdot \biggl(\int_{-\infty}^t\int_{\mathbb R^d} \mathbf{1}_{Q_2^c(x)}(z) \rho(z)|f(s,z)|^p dzds\biggr)^{\frac{1}{p}}.
\end{align*}
The second factor is bounded by $\|\rho^{\frac{1}{p}}f\|_p$. Hence, after factorizing the first integral as
$$
\biggl(\int_{-\infty}^t e^{-p'\lambda (t-s)}(t-s)^{(1-\frac{1}{\alpha})\frac{p'}{p}}ds \biggr)^{\frac{1}{p'}}  \biggl(\int_{\mathbb R^d}\mathbf{1}_{Q_1(x)}(y) |y-z|^{-p'(d+\alpha)} \mathbf{1}_{Q_2^c(x)}(z)\rho^{-\frac{p'}{p}}(z)dz\biggr)^{\frac{1}{p'}}=:a_\lambda(t)j(x,y),
$$
we have
\begin{align*}
R(x)  \leq
CA_{\lambda,p}^pJ^p(x) \|\rho^{\frac{1}{p}}f\|^p_p \langle \mathbf{1}_{Q_1(x)}|b_{\mathfrak s}|\rangle 
\end{align*}
where
\begin{align*}
A_{\lambda,p}:=\sup_{t \in \mathbb R} a_\lambda(t), \quad J(x):=\sup_{y \in \mathbb R^{d}}j(x,y)=\sup_{y \in Q_1(x)}j(x,y). 
\end{align*}

1) In fact, $a_\lambda$ is independent of $t$: by the substitution $u = t-s$, we have
\begin{align*}
a^{p'}_\lambda(t) & =\int_{-\infty}^t e^{-p'\lambda (t-s)}(t-s)^{(1-\frac{1}{\alpha})\frac{p'}{p}} ds \\
& = \int_0^\infty e^{-p'\lambda u}u^{(1-\frac{1}{\alpha})\frac{p'}{p}} du < \infty \quad \text{since $\lambda>0$, $\alpha>1$ and $1<p<\infty$.}
\end{align*}
Furthermore, $A_{\lambda,p} \equiv a_\lambda \downarrow 0$ as $\lambda \uparrow \infty$.

2) Let us estimate the rate of growth of $J(x)$ as $x \rightarrow \infty$. First, note that $$z \mapsto \mathbf{1}_{Q_1(x)}(y)|y-z|^{-p'(d+\alpha)} \mathbf{1}_{Q_2^c(x)}(z)\rho^{-\frac{p'}{p}}(z)$$ is integrable since $p'(d+\alpha)-\theta \frac{1}{p-1}>d$ by our choice of $\theta$ ($<d+\alpha$), and therefore $j(x,y)$ is finite for all $x, y$.
Since $y \in Q_1(x)$ and $z \in Q_2^c(x)$, there exists a constant $C > 0$ such that $|y-z| \geq C|x-z|$. It suffices to analyze the rate of growth of
\begin{align*}
\mathbb R^d \ni x \mapsto \int_{\mathbb R^d} |x-z|^{-p'(d+\alpha)} \mathbf{1}_{Q_2^c(x)}(z)\rho^{-\frac{p'}{p}}(z)dz.
\end{align*}
We have, after the change of variable $w=x-z$,
\begin{align*}
x \mapsto & \int_{\mathbb R^d} |w|^{-p'(d+\alpha)} \mathbf{1}_{|w| \geq 1}(w)(1+\kappa |x-w|^2)^{\frac{\theta}{2(p-1)}}dw \\
& \leq C \int_{\mathbb R^d} |w|^{-p'(d+\alpha)} \mathbf{1}_{|w| \geq 1}(w) \bigl( |w|^{\frac{\theta}{p-1}} + |x|^{\frac{\theta}{p-1}}) dw \\
& \leq C_1 + C_2|x|^{\frac{\theta}{p-1}}.
\end{align*}
Therefore,
$$
J(x) \leq \big(C_1 + C_2|x|^{\frac{\theta}{p-1}}\big)^{\frac{1}{p'}} \leq C_3 + C_4|x|^{\frac{\theta}{p}}.
$$
Hence
\begin{equation}
\label{est_R}
R(x) \leq A_{\lambda,p}^p
 \big(C_5 + C_6|x|^{\theta}\big) \|\rho^{\frac{1}{p}}f\|^p_p \langle \mathbf{1}_{Q_1(x)}|
 b_{\mathfrak s}
 |\rangle.
\end{equation}

Step 3.~Armed with estimates \eqref{est_S}, \eqref{est_R}, we now proceed to prove the sought weighted bound. 
Let us rewrite it as a bound on the operator norm $L^p(\mathbb R^{1+d}) \rightarrow L^p(\mathbb R^{1+d})$
$$
\|\rho^{\frac{1}{p}}|b_{\mathfrak s}|^{\frac{1}{p}}(\lambda \pm \partial_t  +  (- \Delta)^{\frac{\alpha}{2}} )^{(-1+\frac{1}{\alpha})\frac{1}{p}}\rho^{-\frac{1}{p}}\|_{L^p(\mathbb{R}^{1+d}) \rightarrow L^p(\mathbb{R}^{1+d})}.
$$
Selecting $\kappa$ in the definition of $\rho$ sufficiently small, i.e. so that 
$$\big|\rho(x)-\sup_{y \in Q_2(x)}\rho(y)\big| \leq \frac{1}{10}\rho(x) \quad \forall\,x \in \mathbb R^d,$$
we have
\begin{align*}
\|\rho^{\frac{1}{p}} |b_{\mathfrak s}|^{\frac{1}{p}}(\lambda+\partial+A)^{(-1+\frac{1}{\alpha})\frac{1}{p}}f\|_p^p & = \sum_{x \in \mathbb Z^d} \|\rho^{\frac{1}{p}} \mathbf{1}_{Q_1(x)}|b_{\mathfrak s}|^{\frac{1}{p}}(\lambda+\partial+A)^{(-1+\frac{1}{\alpha})\frac{1}{p}}f\|_p^p \\
& \leq \frac{11}{10}\sum_{x \in \mathbb Z^d}\rho(x)\|\mathbf{1}_{Q_1(x)}|b_{\mathfrak s}|^{\frac{1}{p}}(\lambda+\partial+A)^{(-1+\frac{1}{\alpha})\frac{1}{p}}f\|_p^p \\
& \leq  \frac{11}{10} \sum_{x \in \mathbb Z^d}\rho(x) (S(x)+R(x)) \\
& \leq  \frac{11}{10} c^p_{d,p,q,\alpha} \sup_{x\in\mathbb R^d} \|\mathbf{1}_{Q_1(x)}
|b_{\mathfrak s}|^{\frac{1}{\alpha-1}}\|_{E_{1+\epsilon}}^{\alpha-1}\sum_{x \in \mathbb Z^d}\rho(x) \|\mathbf{1}_{Q_2(x)}f\|^p_p \\
& +  \frac{11}{10}\sum_{x \in \mathbb Z^d}\rho(x) A_{\lambda,p}
 \big(C_5 + C_6|x|^{\theta}\big) \|\rho^{\frac{1}{p}}f\|^p_p  \langle \mathbf{1}_{Q_1(x)}|
b_{\mathfrak s}
 |\rangle \\
& (\text{we are using $\sup_{x \in \mathbb R^d} \rho(x)(1+|x|^\theta)<\infty$}) \\
& \leq K_1 c^p_{d,p,q,\alpha} \sup_{x\in\mathbb R^d} \|\mathbf{1}_{Q_1(x)}
|
b_{\mathfrak s}
|^{\frac{1}{\alpha-1}}\|_{E_{1+\epsilon}}^{\alpha-1}\|\rho^{\frac{1}{p}} f\|_p^p + K_2 A_{\lambda,p}^p
  \langle |
  b_{\mathfrak s}
  |\rangle \|\rho^{\frac{1}{p}}f\|^p_p. 
\end{align*}

(\textit{ii}) Upon applying the $L^p$-$L^{p'}$ duality, the result essentially follows from (\textit{i}), although now we will have $\rho$ to a negative power outside of the operator. In detail,
\begin{align*}
&\|(\lambda \pm \partial_t  +  (- \Delta)^{\frac{\alpha}{2}} )^{(-1+\frac{1}{\alpha})\frac{1}{p'}}|b_{\mathfrak s}|^{\frac{1}{p'}}\|_{L_\rho^p(\mathbb{R}^{1+d}) \rightarrow L_\rho^p(\mathbb{R}^{1+d})}\\
&\quad=
\|\rho^{\frac{1}{p}}(\lambda \pm \partial_t  + (- \Delta)^{\frac{\alpha}{2}} )^{(-1+\frac{1}{\alpha})\frac{1}{p'}} |b_{\mathfrak s}|^{\frac{1}{p'}}
\rho^{-\frac{1}{p}}\|_{L^{p}(\mathbb{R}^{1+d}) \rightarrow L^{p}(\mathbb{R}^{1+d})}\\
&\quad=
\|\rho^{-\frac{1}{p}}|b_{\mathfrak s}|^{\frac{1}{p'}}(\lambda \mp \partial_t  + (- \Delta)^{\frac{\alpha}{2}} )^{(-1+\frac{1}{\alpha})\frac{1}{p'}}\rho^{\frac{1}{p}}\|_{L^{p'}(\mathbb{R}^{1+d}) \rightarrow L^{p'}(\mathbb{R}^{1+d})}.
\end{align*}
For notational convenience, we relabel $p'$ as $p$ throughout the remainder of the proof. So, our goal is to estimate the operator norm
$$
\|\rho^{-\frac{1}{p'}}|b_{\mathfrak s}|^{\frac{1}{p}}(\lambda \pm \partial_t  + (- \Delta)^{\frac{\alpha}{2}} )^{(-1+\frac{1}{\alpha})\frac{1}{p}}\rho^{\frac{1}{p'}}\|_{L^p(\mathbb{R}^{1+d}) \rightarrow L^p(\mathbb{R}^{1+d})}.
$$
We argue as in the proof of (\textit{i}),
with the only modification necessary in the proof of 2). Now $j(x,y)$ is defined as 
$$
\biggl(\int_{\mathbb R^d}\mathbf{1}_{Q_1(x)}(y) |y-z|^{-p'(d+\alpha)} \mathbf{1}_{Q_2^c(x)}(z)\rho(z)dz\biggr)^{\frac{1}{p'}},
$$
i.e.\,the weight $\rho^{-\frac1{p'}}$ is transferred to the $f$-term.  As before, $J(x):=\sup_{y \in Q_1(x)}j(x,y)$.

\medskip

$2'$) Let us estimate the rate of decay of $J(x)$ as $x \rightarrow \infty$. We only need to consider $|x| \geq 1$.
Since $y \in Q_1(x)$, it suffices to analyze the rate of decay of
\begin{align*}
\mathbb R^d \ni x \mapsto \int_{\mathbb R^d} |x-z|^{-p'(d+\alpha)} \mathbf{1}_{Q_2^c(x)}(z)\rho(z)dz.
\end{align*}
After the change of variable $w=x-z$,
\begin{align*}
x \mapsto & \int_{\mathbb R^d} |w|^{-p'(d+\alpha)} \mathbf{1}_{|w| \geq 1}(w)(1+\kappa |x-w|^2)^{-\frac{\theta}{2}}dw \\
&\le
	C_1 \int_{\{|x-w|\le\frac{|x|}{2}\}}|x|^{-p'(d+\alpha)}(1+\kappa |x-w|^2)^{-\frac{\theta}{2}}dw
	+C_2\int_{\{ |x-w|>\frac{|x|}{2}\}} \mathbf{1}_{|w| \geq 1}(w) |w|^{-p'(d+\alpha)}  |x|^{-\theta}dw
\\
& \le
C_3| x|^{-p'(d+\alpha)}
	+ C_4   |x|^{-\theta}
\leq C_5   |x|^{-\theta},
\end{align*}
under our assumption $d<\theta<d+\alpha$.
Therefore,
$$
J(x) \leq C_6|x|^{-\frac{\theta}{p'}}, \quad |x| \geq 1.
$$
In the end, repeating the proof of (\textit{i}), we multiply $J^p(x)$ by $\rho^{-\frac{p}{p'}}$ whose growth at infinity is cancelled by the decay of $|x|^{-\frac{\theta p}{p'}}$. This ends the proof of (\textit{ii}) for $b_{\mathfrak s}$.

\medskip

The proof for $b_{\mathfrak b}$ is standard and is included in Appendix \ref{app_weights} for the  reader's convenience. It gives the remaining terms in (\textit{i}), (\textit{ii}) with constant $K_3$. \hfill \qed

\bigskip

\section{Proof of Theorem \ref{thm1}}

Put for brevity $A:=(-\Delta)^{\frac{\alpha}{2}}$, $\partial:=\partial_t.$
The proof of Theorem \ref{thm1} requires regularity theory of the inhomogeneous Kolmogorov backward equation (here, for the ease of stating the results, with time reversed)
\begin{equation}
\label{eq1}
\lambda u + \partial u + Au + b \cdot \nabla u=f \quad \text{ on } \mathbb{R}^{1+d}.
\end{equation}
The approximating bounded vector fields $\{b_n\}$ are defined by \eqref{b_n}. We note that, with our hypothesis on $b_{\mathfrak{s}}$ and the almost-monotone construction of $b_{n,\mathfrak{s}}$, we have $\sup_n\|b_{n,\mathfrak s}\|_{L^1(\mathbb R^{1+d})}<\infty$.

The results in \ref{dualsubsect} and \ref{feller_subsect} were proved in \cite{D1}.

\subsubsection{ The dual variant of Theorem \ref{thm2}}
\label{dualsubsect}
Let $1<p<\infty$.
Then, for the same constants $c_{d,\alpha,p,q}$ and $\lambda_{d,\alpha,p,q}$ as in 
Theorem \ref{thm2}, for a right-hand side $$f \in \mathbb{W}_\alpha^{(-1+\frac{1}{\alpha})\frac{2}{p'},p}(\mathbb{R}^{1+d})$$ of \eqref{eq1} and a sequence of functions
$\{f_n\} \subset L^\infty(\mathbb{R}^{1+d})$ such that $$
f_n \rightarrow f \quad \text{ in } \mathbb{W}_\alpha^{(-1+\frac{1}{\alpha})\frac{2}{p'},p}
$$
solutions $u_n$ of the approximating Kolmogorov backward equations
\begin{equation*}
\lambda u_n + \partial u_n + A u_n + b_n \cdot \nabla u_n=f_n, \qquad \lambda > \lambda_{d,\alpha,p,q}
\end{equation*}
converge as $n \rightarrow \infty$ in $\mathbb{W}_\alpha^{\frac{2}{\alpha}+\frac{\alpha-1}{\alpha}\frac{2}{p},p}$ to the function
\begin{equation}
\label{u_repr0}
u=(\lambda+\partial  +A)^{-\frac{1}{\alpha}+(-1+\frac{1}{\alpha})\frac{1}{p}}(1+Q_pR_p)^{-1} (\lambda+\partial  +A)^{(-1+\frac{1}{\alpha})\frac{1}{p'}} f
\end{equation}
where
\begin{equation}
R_p:=b^{\frac{1}{p}}\cdot \nabla (\lambda+\partial  +A )^{-\frac{1}{\alpha}+(-1+\frac{1}{\alpha})\frac{1}{p}}, 
\end{equation}
\begin{equation*}
Q_p:=(\lambda+\partial  + A)^{(-1+\frac{1}{\alpha})\frac{1}{p'}}|b|^{\frac{1}{p'}},
\end{equation*}
are bounded operators on $L^p(\mathbb{R}^{1+d})$, and
\begin{equation*}
\|R_p\|_{p \rightarrow p}\|Q_p\|_{p \rightarrow p} <1
\end{equation*}
so the geometric series in \eqref{u_repr0} converges in $L^p(\mathbb{R}^{1+d})$.
In particular, it follows that $$u \in \mathbb{W}_\alpha^{\frac{2}{\alpha}+\frac{\alpha-1}{\alpha}\frac{2}{p},p},$$
and if $p>d+1$
then, by the Sobolev embedding \eqref{sob_emb0}, \eqref{sob_emb} $u$ is bounded and continuous and the convergence of $u_n$ to $u$ is uniform on $\mathbb{R}^{1+d}$.

There is also the following alternative representation
\begin{align}
& u =  (\lambda+\partial  + A)^{-1}f \notag \\
& - (\lambda+\partial  + A)^{-\frac{1}{\alpha}+(-1+\frac{1}{\alpha})\frac{1}{p}}Q_p (1+R_pQ_p)^{-1}R_p (\lambda+\partial  + A)^{(-1+\frac{1}{\alpha})\frac{1}{p'}} f. \label{u_repr}
\end{align}

\subsubsection{Weighted counterpart of \ref{dualsubsect}} 
\label{weight_subsect}
Using Proposition \ref{prop1_weight}, we establish convergence of both fractional representations for the Duhamel series from \ref{dualsubsect} in $L_\rho^p(\mathbb R^{1+d})$. This, and the Sobolev Embedding Theorem, yield for $p>d+1$ (note that, automatically, $p>\frac{d}{\theta}$): assuming additionally that $\lambda$ is fixed sufficiently large, we have, for every fixed $x \in \mathbb R^d$, 
\begin{equation}
\label{weighted_emb}
\sup_{t \in \mathbb R, y \in B_1(x)}|u(t,y)| \leq K \|f\|_{L^p_\rho(\mathbb R^{1+d})}.
\end{equation}
This relaxes the condition on the behaviour of the right-hand side $f$ at infinity compared to \ref{dualsubsect}.

\subsubsection{Feller propagator}
\label{feller_subsect}
Fix some $r \in \mathbb R$. Let $v_n$ denote the solution of the initial-value problem 
\begin{equation}
\label{cauchy}
\left\{
\begin{array}{l}
(\lambda + \partial+ A + b_n \cdot \nabla)v_n  =0, \quad (t,x) \in ]r,\infty[ \times \mathbb R^d, \\[2mm]
 v_n(r,\cdot)=g \in C_\infty(\mathbb R^d),
\end{array}
\right.
\end{equation}
where $b_n$ is defined by \eqref{b_n}.

By a classical result, for each $n \geq 1$, the operators $$U_n^{t,r}g:=v_n(t), \quad -\infty<r \leq t<\infty$$ constitute a (forward) Feller propagator on $C_\infty(\mathbb R^d)$. 
We have conservation of probability:
$$
\int_{\mathbb R^d} e^{\lambda (t-r)}U_n^{t,r}(x,y)dy=1, \quad x \in \mathbb R^d. 
$$

Let $\delta_r(t)$ denote the delta-function (in time) concentrated at $t=r$. Put
\begin{equation}
\label{delta_def}
(\lambda+\partial+ A)^{-\frac{\gamma}{2}}\delta_{r}\,g(t,x):=\mathbf{1}_{t > r}e^{-\lambda (t-r)}\int_{\mathbb R^d} q_{\gamma}(t-r,x-y) g(y)dy,
\end{equation}
i.e.\,by putting the delta-function in time in the right-hand side, the action of the parabolic operator becomes the action of the semigroup of the fractional Laplacian (times the exponential function in time) on $g=g(y)$.

\medskip

Assume now that we are in the setting of the previous subsection \ref{dualsubsect} with
$p>d+1$.
Then the limit
$$
U^{t,r}:=s\mbox{-}C_\infty(\mathbb R^d)\mbox{-}\lim_n U_n^{t,r}  \quad \text{locally uniformly in $(r,t) \in \mathbb R^2$, $r \leq t$}
$$
exists and determines a Feller propagator on $C_\infty(\mathbb R^d)$. Furthermore, for every initial function $g \in C_\infty(\mathbb R^d) \cap W^{1,p}(\mathbb R^d)$, function $v(t):=U^{t,r}g$ has representation
\begin{equation}
\label{v_repr}
v=(\lambda+\partial+A)^{-1}\delta_{r}g - (\lambda+\partial+A)^{-\frac{1}{\alpha}+(-1+\frac{1}{\alpha})\frac{1}{p}}Q_p (1+R_pQ_p)^{-1}N_p \cdot  S_p g,
\end{equation}
where
$$
N_p:=b^{\frac{1}{p}} (\lambda+\partial+A)^{(-1+\frac{1}{\alpha})\frac{1}{p}} \in \mathcal B(L^p(\mathbb{R}^{1+d}),[L^p(\mathbb{R}^{1+d})]^d)
$$
and
$$
S_p g:=\nabla (\lambda+\partial+A)^{-\frac{1}{p'}-\frac{1}{\alpha p}}\delta_{r}g, \qquad
\|S_pg\|_{L^p(\mathbb{R}^{1+d})} \leq C_{d,\alpha,p} \|\nabla g\|_{L^p(\mathbb R^d)}.
$$

\medskip

We are now in a position to begin the proof of Theorem \ref{thm1}.

\medskip

\noindent \textit{Proof of} (\textit{ii}). The forward Feller propagator $\{U^{t,r}\}_{0 \leq r \leq t}$ of \ref{feller_subsect} is our point of departure. Define the backward Feller propagator $\{P^{t,r}\}_{0 \leq t \leq r
\leq T}$  via
$$
P^{t,r}:=e^{\lambda (r-t)}U^{T-t,T-r},
$$
where the exponential factor is needed to restore the preservation of probability (and so $P^{t,r}$ are independent of $\lambda$).
Then, by a classical result, there exists a unique family of probability measures $\{\mathbb P_{s,x}\}_{x \in \mathbb R^d, s \geq 0}$ such that identity \eqref{P} holds.

\medskip

\noindent \textit{Proof of} (\textit{i}). We first prove Krylov-type bound \eqref{krylov_bd} for $\{\mathbb P_{s,x}\}_{x \in \mathbb R^d, s \geq 0}$. Recall that $\{b_n\}$ denotes the bounded smooth approximation of $b$ defined by \eqref{b_n}. We define $\{\mathsf{f}_n\}$ in the same way.
Let $\mathbb{E}^n_{s,x}$ denote the expectation with respect to the probability measure $\mathbb P^n_{s,x}:=\mathbb P_{s,x}(b_n)$. 
We have
\begin{equation*}
\mathbb E^n_{s,x}\int_s^T |\mathsf{f}_n(r,\omega_r)h(r,\omega_r)|dr = u_n(s,x),
\end{equation*}
where $u_n=u_n(s,x)$ is the classical solution to the terminal-value problem in $s \leq T$
$$
(-\partial + A + b_n \cdot \nabla)u_n=|\mathsf{f}_n h|, \quad u_n|_{s=T}=0,
$$
which we put in the form
$$
(-\partial + A + b_n \cdot \nabla)u_n=\mathbf{1}_{s \leq T}|\mathsf{f}_n|h \quad \text{ on } \mathbb R^{1+d}
$$
and, furthermore, setting $$e_\lambda(s)=e^{\lambda s},$$
we write
$$
(\lambda-\partial + A + b_n \cdot \nabla)(e_\lambda u_n)=e_\lambda(s)\mathbf{1}_{s \leq T}|\mathsf{f}_n|h \quad \text{ on } \mathbb R^{1+d}.
$$
Applying to the last equation \eqref{weighted_emb} after reversing the direction of time, the Duhamel series representation of \ref{dualsubsect}, we arrive at
\begin{equation}
\label{apr_bd}
\mathbb E^n_{s,x}\int_s^T |\mathsf{f}_n(r,\omega_r)h(r,\omega_r)|dr \leq K\|\mathbf{1}_{[s,T]}|\mathsf{f}_n|^{\frac{1}{p}} h\|_{L^p_\rho}
\end{equation}
for all $h \in C_b(\mathbb R^{1+d})$.
Our  goal is to pass to the limit $n \rightarrow \infty$ in both sides. The passage to the limit on the right-hand side is straightforward. 
To pass to the limit on the left-hand side, we need the weak convergence of the probability measures:
\begin{equation}
\label{P_convx}
\mathbb P_{s,x}^n \rightarrow \mathbb P_{s,x} \text{ weakly},
\end{equation}
which holds, by a classical result, due to the convergence of the Feller propagators in \ref{feller_subsect}.
We write
$$
\mathbb E^n_{s,x}\int_s^T|\mathsf{f}_nh| = \mathbb E^n_{s,x}\int_s^T(|\mathsf{f}_n|-|\mathsf{f}_m|)|h| + \mathbb E^n_{s,x}\int_s^T|\mathsf{f}_mh| - \mathbb E_{s,x}\int_s^T|\mathsf{f}_mh| + \mathbb E_{s,x}\int_s^T|\mathsf{f}_m h|,
$$
where $m$ is fixed. Then, the difference between the second and third terms on the right-hand side vanishes as $n \rightarrow \infty$ by \eqref{P_convx}; the last term:
\begin{align*}
\mathbb E_{s,x}\int_s^T|\mathsf{f}_mh| & =\lim_n \mathbb E^n_{s,x}\int_s^T |\mathsf{f}_mh|\\
&\leq \liminf_n \mathbb E^n_{s,x}\int_s^T|\mathsf{f}_n h| + \liminf_n\mathbb E^n_{s,x}\int_s^T|\mathsf{f}_n-\mathsf{f}_m||h| \\
& (\text{apply \eqref{apr_bd} to $\mathsf{f}_n$ and to $\mathsf{f}_n-\mathsf{f}_m$, respectively}) \\
& \leq K \|\mathbf{1}_{[s,T]}|\mathsf{f}|^{\frac{1}{p}} h\|_{L^p_\rho} + K \|\mathbf{1}_{[s,T]}|\mathsf{f}-\mathsf{f}_m|^{\frac{1}{p}} h\|_{L^p_\rho}.
\end{align*}
It remains to pass to the limit $m \rightarrow \infty$ on the left-hand side of this inequality. This is done by means of Fatou's lemma, once we note that if $S \subset \mathbb R^{1+d}$ has measure zero (i.e.\,the set on which $|\mathsf{f}h|$ and $\liminf_n |\mathsf{f}_n h|$ possibly differ), then
$$
\mathbb E_{s,x}\int_s^T \mathbf{1}_{S} =0. 
$$
Indeed, for every $\varepsilon>0$ there exists a larger open set $U_\varepsilon \supset S$ such that its Lebesgue measure satisfies $|U_\varepsilon|<\varepsilon$. Since the set of paths that pass through $U_\varepsilon$ is also open, we have the inequality
$$
\mathbb E_{s,x}\int_s^T \mathbf{1}_{U_\varepsilon} \leq \liminf_n \mathbb E^n_{s,x}\int_s^T \mathbf{1}_{U_\varepsilon}.
$$
In fact, we can further increase the right-hand side by replacing the indicator function with a continuous function that equals $1$ on $U_\varepsilon$ and $0$ outside a slightly larger set of volume $<2\varepsilon$. We can now show that the right-hand side goes to zero as $\varepsilon \downarrow 0$ using the bound \eqref{apr_bd}.

Thus, after passing to the limit in $m$, we arrive at the sought Krylov-type bound
\begin{equation}
\label{kr_bd2}
\mathbb E_{s,x}\int_s^T |\mathsf{f}(r,\omega_r)h(r,\omega_r)|dr \leq K\|\mathbf{1}_{[s,T]}|\mathsf{f}|^{\frac{1}{p}} h\|_{L^p_\rho}.
\end{equation}
We will use \eqref{kr_bd2} twice, in the proof of existence of weak solution and in the proof of conditional uniqueness.

\medskip

We now proceed to the proof of the properties (a)-(c) in assertion (\textit{i}).
Without loss of generality, $s=0$; (a) and (b) follow right away from the construction of $\mathbb P_{0,x}$ and from the Krylov-type bound \eqref{kr_bd2} (take $\mathsf{f}=b$, $h=1$). It remains to verify (c), i.e.\,to show that
$$
Z_t(\omega):=\omega_t-x+\int_0^t b(s,\omega_s)ds, \quad t \geq 0,
$$
is an $\alpha$-stable process under $\mathbb P_{0,x}$.
Arguing as \cite{P,PP,CW}, we need to show that
\begin{equation}
\label{moment}
\mathbb E_{0,x} \biggl[e^{i \xi \cdot (\omega_t-x+\int_0^t b(s,\omega_s)ds)} \biggr]=e^{-t|\xi|^\alpha}, \quad \forall\,x,\xi \in \mathbb R^d, \;t \geq 0,
\end{equation}
where the right-hand side is the characteristic function of the isotropic $\alpha$-stable process. We will use this below to identify $Z$ as an isotropic $\alpha$-stable process, 
	and thus to prove (c). Specifically, the proof of \eqref{moment} yields a more general result: for all $0 \leq r \leq t \leq T$, 
$$
\mathbb E_{r,x}\exp\left(
 i\xi\cdot\left(
 \omega_t-x+\int_r^t b(u,\omega_u)\,du
 \right)
\right)= e^{-(t-r)|\xi|^\alpha}.
$$
Therefore,
$$
\mathbb E_{0,x}\left[
 e^{i\xi\cdot (Z_t-Z_r)}
 \,\middle|\, \mathcal F_r
\right] = e^{-(t-r)|\xi|^\alpha}.
$$
The right-hand side is deterministic, so $Z_t-Z_r$ is independent of $\mathcal F_r$, and has the isotropic $\alpha$-stable law with characteristic
function $e^{-(t-r)|\xi|^\alpha}$. Iterating this identity, we obtain
$$
\mathbb E_{0,x}\exp\left(
 i\sum_{k=1}^n \xi_k\cdot (Z_{t_k}-Z_{t_{k-1}})
\right)
=
\prod_{k=1}^n
e^{-(t_k-t_{k-1})|\xi_k|^\alpha}.
$$
Thus the increments of $\{Z_t\}$ are independent and stationary. Since  $\{Z_t\}$ has
c\`adl\`ag paths and $Z_0=0$, it is an isotropic
$\alpha$-stable process.

We establish \eqref{moment} by first showing that, for all $r \in [0,t]$, $x \in \mathbb R^d$, $\xi \in \mathbb R^d$, both functions
$$
\nu(r,t,x,\xi):=\mathbb E_{r,x} \biggl[e^{i \xi \cdot (\omega_t+\int_r^t b(s,\omega_s)ds)} \biggr]
$$	
and
$$
\tilde{\nu}(r,t,x,\xi):=e^{i \xi \cdot x-(t-r)|\xi|^\alpha}
$$	
are bounded solutions of the same integral equation, and then showing that this integral equation can have only one bounded solution, so \eqref{moment} follows.

\begin{claim}
\label{cl1}
$\nu(r,t,x,\xi)$ satisfies the integral equation
\begin{align}
\label{eq_nu}
\nu(r,t,x,\xi) = \mathbb{E}_{r,x}[ e^{i \xi \cdot \omega_t}] 
+ i \int_r^t \mathbb{E}_{r,x}[ \xi \cdot b(u,\omega_u)  \nu(u,t,\omega_u,\xi)] \, du.
\end{align}
\end{claim}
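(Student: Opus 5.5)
The identity \eqref{eq_nu} is a Duhamel-type formula along paths. I will derive it from the elementary calculus identity for the exponential of an additive functional, combined with the Markov property of $\{\mathbb P_{s,x}\}$ from (\textit{ii}). To control the integrability of $b$ along paths, I will use the Krylov-type bound \eqref{kr_bd2}.

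Write $B_u:=\int_u^t b(s,\omega_s)\,ds$; it is finite $\mathbb P_{r,x}$-a.s. by (b). The function $u\mapsto e^{i\xi\cdot B_u}$ is absolutely continuous on $[r,t]$, with derivative $-i\,\xi\cdot b(u,\omega_u)e^{i\xi\cdot B_u}$ for a.e.\ $u$. Integrating from $r$ to $t$ gives pathwise
\[
e^{i\xi\cdot B_r}=1+i\int_r^t \xi\cdot b(u,\omega_u)\,e^{i\xi\cdot B_u}\,du .
\]
I multiply this by $e^{i\xi\cdot\omega_t}$ and take $\mathbb E_{r,x}$. Fubini is justified because the integrand is bounded in modulus by $|\xi||b(u,\omega_u)|$, and $\mathbb E_{r,x}\int_r^t|b|\,du<\infty$ by \eqref{kr_bd2} with $\mathsf f=b$ and $h=1$. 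The $h=1$ case is admissible since $\mathbf 1_{[r,T]}|b|^{1/p}\in L^p_\rho$: the singular part lies in $L^1$ under ($\mathbf H_1$), $b_{\mathfrak b}$ is bounded, and $\rho$ is integrable. This yields
\[
\nu(r,t,x,\xi)=\mathbb E_{r,x}[e^{i\xi\cdot\omega_t}]+i\int_r^t\mathbb E_{r,x}\big[\xi\cdot b(u,\omega_u)\,e^{i\xi\cdot(\omega_t+B_u)}\big]\,du .
\]
It remains to condition on $\mathcal F_u$. The variable $e^{i\xi\cdot(\omega_t+B_u)}$ is a functional of the path after time $u$, so the Markov property gives
\[
\mathbb E_{r,x}\big[e^{i\xi\cdot(\omega_t+B_u)}\mid\mathcal F_u\big]=\nu(u,t,\omega_u,\xi).
\]
Since $\xi\cdot b(u,\omega_u)$ is $\mathcal F_u$-measurable, this produces \eqref{eq_nu}.

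The main obstacle is this last Markov step. Because $b$ is only Borel and unbounded, $e^{i\xi\cdot B_u}$ is not a continuous functional of the path, and the Markov property of $\{\mathbb P_{s,x}\}$ is only directly available for cylinder functionals built from the propagator $P^{t,r}$. Taking the conditional expectation of $\xi\cdot b\,e^{i\xi\cdot(\omega_t+B_u)}$ also needs integrability of $b(u,\omega_u)$, which I have for a.e.\ $u$ in the $du\,d\mathbb P$ sense, so this is fine.

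To handle the Markov step, I would first replace $b$ by bounded continuous $b_m$, from \eqref{b_n} or a further mollification. For such $b_m$ the functional $\exp(i\xi\cdot\int_u^t b_m\,ds)$ can be approximated by Riemann sums, which are cylinder functionals, so the Markov identity follows from \eqref{P} and the Chapman–Kolmogorov property of $P^{t,r}$. Then I pass $m\to\infty$ inside the Markov identity, using
\[
\mathbb E_{u,y}\int_u^t|b-b_m|\,ds\le K\|\mathbf 1_{[u,T]}|b-b_m|^{1/p}\|_{L^p_\rho}\to0,
\]
which is \eqref{kr_bd2} applied to $\mathsf f=b-b_m$. This bound is uniform in the starting point $y$ in bounded sets, and its integrated version along $\omega_u$ is controlled again by \eqref{kr_bd2} with $\mathsf f=b$. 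Measurability of $(u,y)\mapsto\nu(u,t,y,\xi)$ follows from the same limit, since it holds for $b_m$ by the Feller property.
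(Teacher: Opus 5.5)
Your proof is correct and is essentially the paper's argument. The paper also expands the exponential of the additive functional by the fundamental theorem of calculus and then applies the Markov property at time $u$; the difference is that it first splits at an intermediate time $s$ (using the Markov property one extra time) and only sets $s=t$ at the end, a detour your direct choice $B_u=\int_u^t b(s,\omega_s)\,ds$ avoids.

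Your approximation of $b$ by bounded continuous $b_m$ to justify the Markov step is more than needed. For a Markov family on the canonical path space, the Markov property passes from product cylinder functionals to every bounded $\sigma(\omega_v : v\ge u)$-measurable functional by a monotone class argument, and the paper uses this implicitly. The approximation is harmless, though, and your Fubini justification via \eqref{kr_bd2} with $\mathsf f=b$, $h=1$ fills in a step the paper leaves unstated.
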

\begin{proof}[Proof of Claim \ref{cl1}]

Let $r \leq s \leq t$. We write
$$
\int_r^t \xi \cdot b(u,\omega_u)\, du 
= \int_r^s \xi \cdot b(u,\omega_u)\, du + \int_s^t \xi \cdot b(u,\omega_u)\, du
$$
and obtain, using the Markov property,
\begin{align}
\nu(r,t,x,\xi) 
&= \mathbb{E}_{r,x}\Big[ e^{i \int_r^s \xi \cdot b(u,\omega_u)\,du}
e^{i \int_s^t \xi \cdot b(u,\omega_u)\,du} e^{i \xi \cdot \omega_t} \big]\notag\\
&= \mathbb{E}_{r,x} \Big[ e^{i \int_r^s \xi \cdot b(u,\omega_u)\,du}
\mathbb{E}_{r,x}\Big[e^{i \int_s^t \xi \cdot b(u,\omega_u)\,du} e^{i\xi \cdot \omega_t} \,\Big|\, \mathcal{F}_s \Big] \Big]\notag\\
&= \mathbb{E}_{r,x} \Big[ e^{i \int_r^s \xi \cdot b(u,\omega_u)\,du}
\mathbb{E}_{s,\omega_s}\Big[e^{i \int_s^t \xi \cdot b(u,\omega_u)\,du} e^{i\xi \cdot \omega_t} \Big] \Big]\notag\\
&= \mathbb{E}_{r,x}\Big[e^{i \int_r^s \xi \cdot b(u,\omega_u)\,du}  \nu(s,t,\omega_s,\xi)\Big]\label{1}.
\end{align}
Note that
\begin{align*}
e^{
i\int_r^s \xi \cdot  b(u,\omega_u)\,du}
= 1 + i \int_r^s \xi \cdot b(u,\omega_u)\, 
e^{i\int_u^s \xi \cdot b(l,\omega_l)\,dl}
\, du
\end{align*}
(integrate the derivative of the left-hand side).
Therefore, 
\begin{align*}
&\nu(r,t,x,\xi) \\
&\quad= \mathbb{E}_{r,x}[\, \nu(s,t,\omega_s,\xi)\,] 
+ i \int_r^s \mathbb{E}_{r,x}\Big[ \nu(s,t,\omega_s,\xi) \, \xi \cdot b(u,\omega_u) e^{i  \int_u^s \xi \cdot b(l,\omega_l)\, dl} \Big] du.
\end{align*}
Using the Markov property again and applying \eqref{1} in  the other direction, we have
\begin{align*}
&\mathbb{E}_{r,x}\Big[ \nu(s,t,\omega_s,\xi) \, \xi \cdot b(u,\omega_u) e^{i \int_u^s  \xi \cdot b(l,\omega_l)\, dl} \Big]\\
&\quad =
\mathbb{E}_{r,x}\Big[ \mathbb{E}_{r,x}\Big[ \nu(s,t,\omega_s,\xi) \, \xi \cdot b(u,\omega_u)\, 
e^{i \int_u^s \xi \cdot  b(l,\omega_l)\, dl} \,\Big|\, \mathcal{F}_u \Big]\Big]\\
&\quad = \mathbb{E}_{r,x}\Big[ \xi \cdot b(u,\omega_u) 
\mathbb{E}_{u,\omega_u}\Big[ \nu(s,t,\omega_s,\xi) \,e^{i \int_u^s \xi \cdot  b(l,\omega_l)\, dl}\Big] \,\Big]\\
&\quad =
\mathbb{E}_{r,x}\big[ \xi \cdot b(u,\omega_u) \nu(u,t,\omega_u,\xi) \big].
\end{align*}
Thus,
$$
\nu(r,t,x,\xi) = \mathbb{E}_{r,x}[ \nu(s,t,\omega_s,\xi)] 
+ i \int_r^s \mathbb{E}_{r,x}[ \xi \cdot b(u,\omega_u) \nu (u,t,\omega_u,\xi)] \, du.
$$
Note that
$$
\nu(t,t,x,\xi) = \mathbb{E}_{t,x}[ e^{i \xi \cdot \omega_t}] = e^{i \xi \cdot x}.
$$
Taking $s=t$, we have
$$
\nu(r,t,x,\xi) = \mathbb{E}_{r,x}[ e^{i \xi \cdot \omega_t}] 
+ i \int_r^t \mathbb{E}_{r,x}[ \xi \cdot b(u,\omega_u) \nu(u,t,\omega_u,\xi)] \, du.
$$
This completes the proof of Claim \ref{cl1}.
\end{proof}

\begin{claim}
\label{cl2}	
The function 
$
\tilde{\nu}(r,t,x,\xi):=e^{i \xi \cdot x-(t-r)|\xi|^\alpha}
$ is a bounded solution of \eqref{eq_nu}.		
	
\end{claim}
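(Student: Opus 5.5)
The plan is to verify \eqref{eq_nu} for $\tilde\nu$ directly, using the fact that $\varphi(u,y):=e^{i\xi\cdot y-(t-u)|\xi|^\alpha}$ is annihilated by the free backward operator. We argue at the level of the approximating measures $\mathbb P^n_{r,x}=\mathbb P_{r,x}(b_n)$ and then pass to the limit exactly as in the proof of the Krylov-type bound \eqref{kr_bd2}. Boundedness is trivial: $|\tilde\nu|\le 1$.

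\emph{Step 1 (identity for $b_n$).} Since $(-\Delta)^{\frac{\alpha}{2}}e^{i\xi\cdot y}=|\xi|^\alpha e^{i\xi\cdot y}$, we have $\partial_u\varphi-(-\Delta)^{\frac{\alpha}{2}}\varphi=|\xi|^\alpha\varphi-|\xi|^\alpha\varphi=0$ and $\nabla\varphi=i\xi\varphi$. The drift $b_n$ is bounded and smooth, so $\mathbb P^n_{r,x}$ is the law of a classical weak solution of \eqref{sde1} with drift $b_n$. Itô's formula \eqref{ito} applies to the bounded smooth function $\varphi$ (its real and imaginary parts have bounded derivatives) and gives
\[
\varphi(t,\omega_t)=\varphi(r,x)-i\int_r^t \xi\cdot b_n(u,\omega_u)\varphi(u,\omega_u)\,du+M_t .
\]
Taking expectations, and using $\varphi(t,y)=e^{i\xi\cdot y}$ and $\varphi(u,\omega_u)=\tilde\nu(u,t,\omega_u,\xi)$, we obtain
\[
\tilde\nu(r,t,x,\xi)=\mathbb E^n_{r,x}[e^{i\xi\cdot\omega_t}]+i\int_r^t\mathbb E^n_{r,x}\big[\xi\cdot b_n(u,\omega_u)\tilde\nu(u,t,\omega_u,\xi)\big]du .
\]
Equivalently, this is the Duhamel formula for the Kolmogorov equation with drift $b_n$ applied to the terminal datum $e^{i\xi\cdot y}$.

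\emph{Step 2 (passage to the limit).} The first term converges to $\mathbb E_{r,x}[e^{i\xi\cdot\omega_t}]$ by the convergence of the Feller propagators in \ref{feller_subsect}, i.e.\ by $P^{r,t}_n e^{i\xi\cdot}\to P^{r,t}e^{i\xi\cdot}$. For the drift term, fix $m$ and split $b_n=(b_n-b_m)+b_m$.

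\begin{enumerate}
\item[--] The part with $b_m$ converges as $n\to\infty$ to $\mathbb E_{r,x}\int_r^t \xi\cdot b_m\tilde\nu\,du$. Indeed, $b_m\tilde\nu$ is bounded and continuous, and we may use the weak convergence \eqref{P_convx} together with convergence of the time marginals (again from the propagator convergence) and Fubini, which avoids any Skorokhod-continuity issue.
\item[--] The remainder is controlled by the uniform-in-$n$ bound \eqref{apr_bd} with $\mathsf f_n=b_n-b_m$ and $h=|\xi|$, which gives at most $K|\xi|\,\|\mathbf 1_{[r,t]}|b_n-b_m|^{\frac1p}\|_{L^p_\rho}$. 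Its $p$-th power equals $\int_r^t\!\int\rho|b_n-b_m|$, which is small for large $n,m$: for the singular part use $|b_{\mathfrak s}|\in L^1$ (hypothesis ($\mathbf H_1$)); for the bounded part use $\rho\in L^1(\mathbb R^d)$ (since $\theta>d$) on the finite interval $[r,t]$.
\end{enumerate}

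Finally, letting $m\to\infty$ in $\mathbb E_{r,x}\int_r^t\xi\cdot b_m\tilde\nu\,du$, the Krylov-type bound \eqref{kr_bd2} with $\mathsf f=b-b_m$ and $h=|\xi|$ yields convergence to $\mathbb E_{r,x}\int_r^t\xi\cdot b\,\tilde\nu\,du$, by the same $L^1_\rho$ estimate. This gives \eqref{eq_nu} for $\tilde\nu$.

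The main obstacle is Step 2, specifically the uniform control of the tails $|b_n-b_m|$ under $\mathbb P^n_{r,x}$. This is exactly where the weighted Krylov bound \eqref{apr_bd} (uniform in $n$) and the $L^1$ part of ($\mathbf H_1$) are needed. The rest is the algebraic cancellation $\partial_u\varphi=(-\Delta)^{\frac{\alpha}{2}}\varphi$.
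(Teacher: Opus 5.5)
Your proof is correct, and it is more direct than the paper's.

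\textbf{How the paper argues.} The paper also has identities at the level of $b_n$: \eqref{n_id}, and the classical $(-\partial+A+b_n\cdot\nabla)^{-1}\delta_t h(x)=\mathbb E^n_{r,x}[h(\omega_t)]$. It passes to the limit in each separately and then reassembles the result through the limiting operator. That is, it identifies $ie^{\lambda(r-t)}\mathbb E_{r,x}\int_r^t \xi\cdot b\,\tilde\nu\,du$ with the Duhamel remainder $\Theta(b)\delta_t h$. It then uses \eqref{u_repr} and \eqref{v_repr} to identify $\Theta(b)\delta_t h$ with $-(\lambda-\partial+A+b\cdot\nabla)^{-1}\delta_t h+(\lambda-\partial+A)^{-1}\delta_t h$.

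\textbf{What is shared and what you avoid.} On the probabilistic side the paper uses exactly your splitting $b_n=(b_n-b_m)+b_m$ together with \eqref{P_convx}, \eqref{apr_bd} and \eqref{kr_bd2}. On the PDE side, however, it also needs the convergence \eqref{theta_conv} of $\Theta(b_n)\delta_t h$ in $C_\infty(\mathbb R\times B_1(x))$. That requires running the weighted Duhamel series on the non-decaying datum $h=e^{i\xi\cdot x}$, which is where $|b|^{\frac1p}\in L^p_\rho$ is invoked. Your observation removes this step: after It\^o's formula for the smooth bounded drift $b_n$, the left-hand side of the identity is $\tilde\nu(r,t,x,\xi)$, which does not depend on $n$. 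So only the two probabilistic terms have to converge, and you handle them with the same tools the paper uses. What the paper's route gives in exchange is an explicit operator formula for the limit, tying the claim to \eqref{u_repr} and \eqref{v_repr}; this is not needed logically for the claim.

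\textbf{One line to add.} The function $e^{i\xi\cdot y}$ is bounded but not in $C_\infty(\mathbb R^d)$. Hence $P^{r,t}_n e^{i\xi\cdot}\to P^{r,t}e^{i\xi\cdot}$ does not follow literally from strong convergence of the propagators on $C_\infty$. It does follow because $P^{r,t}_n(x,\cdot)$ and $P^{r,t}(x,\cdot)$ are probability measures, so vague convergence upgrades to weak convergence. The paper glosses over the same point.
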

\begin{proof}
In the proof we will be using the operators of \ref{dualsubsect}. 
We represent
$$
\tilde{\nu}(r,t,x,\xi)\;\;\biggl(=e^{i\xi \cdot x-(t-r)|\xi|^\alpha}\biggr)\;\; = \int_{\mathbb{R}^d } e^{-(t-r)A} (x-y)e^{i\xi \cdot y}dy.
$$
We will show that \eqref{u_repr} basically contains the Duhamel formula. We first give a formal calculation -- it is formal because we  apply \eqref{u_repr} to $f$ that  does not vanish at infinity -- and then  make this calculation rigorous. We will use 
$$
 e^{-(t-u)(\lambda+A)} h(x) =  (\lambda-\partial  + A)^{-1}(\delta_t h)(u,x)
$$
(cf.\,notation \eqref{delta_def}).
We have
\begin{align}
& i e^{\lambda (r-t)} \mathbb{E}_{r,x}  \int_r^t [ \xi \cdot b(u,\omega_u)  \tilde{\nu}(u,t,\omega_u,\xi)] \, du \equiv e^{\lambda(r-t)} \mathbb{E}_{r,x}\int_r^t b(u,\omega_u)  \cdot \nabla e^{-(t-u)A}e^{i\xi \cdot \omega_u} \, du \label{l1} \\
&(\text{put  $h(x):=e^{i\xi \cdot x}$ and formally apply \eqref{u_repr} with $f(u,x):= b(u,x) \cdot \nabla e^{-(t-u)(\lambda+A)} h(x)$, } \notag \\
& \text{but after reversing the direction of time}) \notag \\
&=(\lambda-\partial  + A)^{-\frac{1}{\alpha}+(-1+\frac{1}{\alpha})\frac{1}{p}} Q_pR_p 
(\lambda-\partial  + A)^{(-1+\frac{1}{\alpha})\frac{1}{p'}} \delta_t h(x) \notag  \\
&\quad- (\lambda-\partial  + A)^{-\frac{1}{\alpha}+(-1+\frac{1}{\alpha})\frac{1}{p}}Q_p (1+R_pQ_p)^{-1}R_p Q_p R_p (\lambda-\partial  + A)^{(-1+\frac{1}{\alpha})\frac{1}{p'}} \delta_t h(x) \label{l2} \\
&=: \Theta(b) \delta_t h(x), \notag
\end{align}
where, with some abuse of notation, $Q_p$ and $R_p$ are defined in the same way as in \ref{dualsubsect}, but with time derivative $-\partial$ instead of $\partial$.
On the other hand, referring again to \eqref{u_repr} and also to \eqref{v_repr}, one sees that the last expression coincides with
\begin{align}
& -(\lambda-\partial + A + b \cdot \nabla)^{-1}\delta_t h(x)+(\lambda-\partial  +
A
)^{-1}\delta_t h(x) \label{l3} \\
 & \quad = -e^{\lambda(r-t)}\mathbb E_{r,x}[h(\omega_t)] + e^{-(t-r)(\lambda+A)}h(x)\;\;\biggl(= -e^{\lambda(r-t)}\mathbb E_{r,x}[h(\omega_t)] +
 e^{\lambda(r-t)}
  \tilde{\nu}(r,t,x,\xi) \biggr). \label{l4}
\end{align}
This completes the proof of Claim \ref{cl2}, once we justify that, indeed, \eqref{l1} $=$ \eqref{l2} and \eqref{l3} $=$ \eqref{l4}.

1) ``\eqref{l1} $=$ \eqref{l2}''. By the classical theory, for every $n=1,2,\dots$, we have
\begin{equation}
\label{n_id}
 i 
 e^{\lambda(r-t)}
 \mathbb{E}^n_{r,x}  \int_r^t [ \xi \cdot b_n(u,\omega_u)  \tilde{\nu}(u,t,\omega_u,\xi)] \, du = \Theta(b_n)\delta_t h(x),
\end{equation}
where, recall, $\{b_n\}$ is a bounded smooth approximation of $b$ defined by \eqref{b_n}, and $\mathbb{E}^n_{r,x}$ is the expectation with respect to the probability measure $\mathbb P^n_{r,x}$. This identity is simply the relationship between taking the expectation of an integral in time and solving the corresponding inhomogeneous parabolic equation.
We need to pass to the limit $n \rightarrow \infty$ on both sides of \eqref{n_id}.

We pass to the limit on the right-hand side of \eqref{n_id} using
\begin{equation}
\label{theta_conv}
\Theta(b_n)\delta_t h \rightarrow \Theta(b)\delta_t h \quad \text{ in } C_\infty(\mathbb R \times B_1(x)),
\end{equation}
cf.\,\ref{weight_subsect}, \ref{feller_subsect}. 
Namely, everywhere except the right-most occurrence of $R_p$ in the definition \eqref{l2} of $\Theta$ we use the convergence
$$
Q_p(b_n) \rightarrow Q_p(b), R_p(b_n) \rightarrow R_p(b)  \quad \text{ strongly in } L_\rho^p(\mathbb R^{1+d})
$$
in \cite{D1} -- this is straightforward since  $\{b_n\}$ can be treated as essentially monotone approximation of $b$. For the right-most $R_p$ in \eqref{l2}, we need to prove
$$
R_p(b_n)(\lambda-\partial  + A)^{(-1+\frac{1}{\alpha})\frac{1}{p'}} \delta_t h \rightarrow R_p(b)(\lambda-\partial  + A)^{(-1+\frac{1}{\alpha})\frac{1}{p'}} \delta_t h \quad \text{ in } L_\rho^p(\mathbb R^{1+d}).
$$
Here we write $R_p$ with some abuse of notation: $h(x)=e^{i\xi \cdot x}$ is only bounded, so $(\lambda-\partial  + A)^{(-1+\frac{1}{\alpha})\frac{1}{p'}} \delta_t h$ is also only bounded in the spatial variables. However, expanding the definition of $R_p$, one sees that there we are multiplying a bounded function by  $|b|^{\frac{1}{p}}$ which is, by our assumption, in $L_\rho^p(\mathbb R^{1+d})$. (Let us add that having the gradient in $R_p$ does not cause any problem here: after commuting it with the parabolic operator, we put it on $h(x):=e^{i\xi \cdot x}$, which still gives us a bounded function.) The convergence now follows easily since, once again, $b_n$ is, essentially, a monotone approximation of $b$.

In order to pass to the limit $n \rightarrow \infty$ on the left-hand side of \eqref{n_id}, we will argue as in the beginning of the present proof, i.e.\,as in the proof of the Krylov-type bound \eqref{kr_bd2}. That is, we will use once again weak convergence of probability measures \eqref{P_convx}, i.e.\,
$
\mathbb P_{r,x}^n \rightarrow \mathbb P_{r,x}$ weakly as $n \rightarrow \infty$. We write, for $m$ fixed,
\begin{align*}
\mathbb E^n_{r,x}\int_r^t \xi \cdot b_n  \tilde{\nu} & = \mathbb E^n_{r,x}\int_r^t \xi \cdot(b_n-b_m) \tilde{\nu}  + \mathbb E^n_{r,x}\int_r^t \xi \cdot b_m \tilde{\nu} - \mathbb E_{r,x}\int_r^t \xi \cdot b_m  \tilde{\nu}  + \mathbb E_{r,x}\int_r^t \xi \cdot b_m \tilde{\nu} \\
& (\text{we expand the last term}) \\
& = \mathbb E^n_{r,x}\int_r^t \xi \cdot(b_n-b_m) \tilde{\nu}   + \mathbb E^n_{r,x}\int_r^t \xi \cdot b_m \tilde{\nu} - \mathbb E_{r,x}\int_r^t \xi \cdot b_m  \tilde{\nu} \\
&\quad + \mathbb E_{r,x}\int_r^t \xi \cdot (b_m-b) \tilde{\nu} + \mathbb E_{r,x}\int_r^t \xi \cdot b \tilde{\nu}.
\end{align*}
By \eqref{P_convx}, the difference between the second and the third terms on the right-hand side  goes to zero as $n \rightarrow \infty$. We apply the Krylov-type bound to the first and the fourth terms, and hence conclude that they tend to zero as $n,m \rightarrow \infty$. The desired convergence follows.

2) ``\eqref{l3} $=$ \eqref{l4}''. We only need to show that $(-\partial + A + b \cdot \nabla)^{-1}\delta_t h(x)=\mathbb E_{r,x}[h(\omega_t)]$. By the classical theory, for each finite $n$,
$$
(-\partial + A + b_n \cdot \nabla)^{-1}\delta_t h(x)=\mathbb E^n_{r,x}[h(\omega_t)],
$$
so we need to pass to the limit $n \rightarrow \infty$. On the left-hand side the convergence is due to \ref{feller_subsect}; we only need to take into account that $h$ is only bounded, but this is done in the same way as in the proof of convergence \eqref{theta_conv}. On the right-hand side we use weak convergence \eqref{P_convx}.

The proof of Claim \ref{cl2} is completed.
\end{proof}

Let 
$$
\mu(r,t,x,\xi) 
:=\nu(r,t,x,\xi) -\tilde{\nu}(r,t,x,\xi).
$$
Then, comparing the definitions of $\nu$ and $\tilde{\nu}$, we have
\begin{align}\label{eq4}
\mu(r,t,x,\xi) 
= i \int_r^t \mathbb{E}_{r,x}\big[ \xi \cdot b(u,\omega_u)\, 
\mu(u,t,\omega_u,\xi) \big] du, \quad 0 \leq r \leq t, \quad x \in \mathbb R^d,
\end{align}

To conclude our proof that $Z_t(\omega)$ is an isotropic $\alpha$-stable process under $\mathbb P_{0,x}$ it remains to establish the following claim.

\begin{claim}
\label{cl3}	
$\mu$
is identically zero.
\end{claim}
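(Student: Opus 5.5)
We will show that the linear integral operator
$$
(\mathcal T g)(r,x):=i\int_r^t \mathbb E_{r,x}\big[\xi\cdot b(u,\omega_u)\,g(u,\omega_u)\big]\,du,\qquad 0\le r\le t,\ x\in\mathbb R^d,
$$
acting on bounded Borel functions $g$ on $[0,t]\times\mathbb R^d$ (with $\xi$ and $t$ fixed), is a contraction on a short time window. Then \eqref{eq4}, i.e.\ $\mu=\mathcal T\mu$, forces $\mu=0$ there, and we propagate backwards in time. Both $\nu$ and $\tilde\nu$ are bounded by $1$ in absolute value, so $\mu$ is bounded, $\|\mu\|_\infty\le 2$. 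This is the Banach contraction argument of \cite{P,PP,CW} announced in the introduction.

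\textbf{Step 1: smallness of the kernel on short intervals.} Fix $\tau>0$. For bounded $g$ and $t-\tau\le r\le t$ we have
$$
|\mathcal T g(r,x)|\le |\xi|\,\|g\|_\infty\,\mathbb E_{r,x}\int_r^{r+\tau}|b(u,\omega_u)|\,du .
$$
So it suffices to show
$$
\sup_{x\in\mathbb R^d,\ r}\ \mathbb E_{r,x}\int_r^{r+\tau}|b(u,\omega_u)|\,du\longrightarrow 0\quad\text{as }\tau\downarrow0 .
$$
For this we use the Krylov-type bound \eqref{kr_bd2} with $\mathsf f=b$ and $h=\mathbf 1$, restricted to the window $[r,r+\tau]$ (approximating the indicator from above by continuous cut-offs, as in the proof of \eqref{kr_bd2}):
$$
\mathbb E_{r,x}\int_r^{r+\tau}|b|\,du\le K\,\|\mathbf 1_{[r,r+\tau]}|b|^{1/p}\|_{L^p_\rho}=K\Big(\int_r^{r+\tau}\!\!\int_{\mathbb R^d}|b|\rho\,dy\,du\Big)^{1/p}.
$$

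\textbf{Step 2: uniformity in $x$.} This is the main obstacle. The weight $\rho$ in \eqref{weighted_emb} is centered at the origin, while the estimate is needed for all $x$. I will use the translation invariance of the whole construction. The constants in Proposition \ref{prop1_weight} depend on $b$ only through $\sup_x\|\mathbf 1_{Q_2(x)}|b_{\mathfrak s}|^{\frac1{\alpha-1}}\|_{E_{1+\epsilon}}$, $\langle|b_{\mathfrak s}|\rangle$ and $\|b_{\mathfrak b}\|_\infty$, and all three are invariant under spatial shifts of $b$. Running the same argument with the shifted weight $\rho(\cdot-x)$ therefore gives \eqref{kr_bd2} with $L^p_{\rho(\cdot-x)}$ and a constant $K$ independent of $x$.

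The right-hand side is then at most
$$
K\Big(\int_r^{r+\tau}\!\!\int|b_{\mathfrak s}|\,dy\,du\Big)^{1/p}+K\|b_{\mathfrak b}\|_\infty^{1/p}\big(\tau\|\rho\|_{L^1}\big)^{1/p},
$$
using $\rho\le1$ and $\rho\in L^1$ since $\theta>d$. The second term is $O(\tau^{1/p})$. For the first term, $|b_{\mathfrak s}|\in L^1(\mathbb R^{1+d})$ by ($\mathbf H_1$), so absolute continuity of the integral of the function $u\mapsto\int|b_{\mathfrak s}(u,y)|\,dy\in L^1(\mathbb R)$ gives smallness uniformly in $r$.

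Should the shift argument fail, a fallback is to take $h=\mathbf 1$ together with the local-in-$x$ sup in \eqref{weighted_emb}. This requires splitting $b$ into a part near $x$, handled by the Morrey smallness, and a far part, handled by the polynomial decay of the heat kernel.

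\textbf{Step 3: conclusion.} Choose $\tau$ so that $|\xi|\sup_{x,r}\mathbb E_{r,x}\int_r^{r+\tau}|b|\le\frac12$. On $[t-\tau,t]$, the function $\mu$ vanishes at $r=t$, and
$$
\sup_{r\in[t-\tau,t],\,x}|\mu|\le\tfrac12\sup_{u\in[t-\tau,t],\,y}|\mu|,
$$
because only values $\mu(u,t,\cdot,\xi)$ with $u\in[r,t]$ enter. Hence $\mu=0$ on $[t-\tau,t]$.

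Next take $r\in[t-2\tau,t-\tau]$. In \eqref{eq4} the integrand vanishes for $u\ge t-\tau$, so the integral reduces to $\int_r^{t-\tau}$, and the same estimate gives $\mu=0$ there. Iterating finitely many times covers $[0,t]$, so $\mu\equiv0$.
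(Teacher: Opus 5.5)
Your proposal is correct and follows the paper's proof. Both arguments make the map $\mu\mapsto i\int_r^t\mathbb E_{r,x}[\xi\cdot b\,\mu]\,du$ a contraction in the sup-norm (not esssup) on a short window $[t-a,t]$, using the Krylov-type bound \eqref{kr_bd2} with $\mathsf f=b$, $h=1$, and then iterate backwards over intervals of fixed length. Your Step 2 (shifting the weight to $\rho(\cdot-x)$, since the constants in Proposition \ref{prop1_weight} depend only on shift-invariant norms of $b$) and your separate treatment of $b_{\mathfrak b}$ via $\rho\in L^1$ make explicit two things the paper leaves implicit when it writes $\sup_x$ with a single constant $K$ and $\|b\|_{L^1_\rho}$: the uniformity in $x$ and the uniform smallness in $a$.
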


\begin{proof}
We first show that, for fixed $\xi\in\mathbb{R}^d$, there exists $a>0$ sufficiently small such that on the interval $r\in[t-a,t]\subset[0,t]$, the only bounded solution to the homogeneous equation \eqref{eq4} is $\mu\equiv 0$.

In what follows, by the $L^\infty$ norm we mean the $\sup$-norm (not ${\rm esssup}$, i.e.\,we do not ignore sets of measure zero).
Define the operator
$$
\mu \mapsto \Gamma \mu(r,t,x,\xi) := i \int_r^t \mathbb{E}_{r,x}\big[\xi \cdot b(u,\omega_u)\, \mu(u,t,\omega_u,\xi)\big]\, du, \quad r \in [t-a,t], \quad x \in \mathbb R^d,
$$
acting on functions $
\mu(\cdot,t,\cdot,\xi)
 \in L^\infty([t-a,t]\times \mathbb{R}^d)$. The constant $a$ will be chosen sufficiently small so that $\Gamma$ is a contraction on  $L^\infty([t-a,t]\times \mathbb{R}^d)$. 
We have
\begin{align*}
&\sup_{x\in \mathbb{R}^d} \sup_{r \in [t-a,t]} |\Gamma \mu(r,t,x,\xi)|\\
&\quad \leq 
    \sup_{x\in \mathbb{R}^d} \sup_{r \in [t-a,t]} |\xi| \int_r^t \mathbb{E}_{r,x}[|b(u,\omega_u)|\, |\mu(u,t,\omega_u,\xi)| ] \, du\\
&\quad \leq 
    \sup_{x\in \mathbb{R}^d} \sup_{r \in [t-a,t]} |\xi| \int_r^t \mathbb{E}_{r,x}|b(u,\omega_u)|  du\, \|\mu(\cdot,t,\cdot,\xi)\|_{L^\infty ([t-a,t]\times \mathbb{R}^d)}.
\end{align*}
By the Krylov-type bound \eqref{kr_bd2},
\begin{align*}
     \sup_{x\in \mathbb{R}^d} \sup_{r \in [t-a,t]}\mathbb{E}_{r,x} \int_{r}^t |b(u,\omega_u)| du \le K \|b\|^{\frac{1}{p}}_{L_\rho^1 ([t-a,t]\times \mathbb{R}^d)}.
\end{align*}

Now, for given $\xi$, we select $a>0$ sufficiently small so that $|\xi|K \|b\|^{\frac{1}{p}}_{L_\rho^1 ([t-a,t]\times \mathbb{R}^d)}<1.$
Furthermore, a simple argument based on the Dominated convergence theorem shows that we can and will select one $a>0$ such that $|\xi|K \|b\|^{\frac{1}{p}}_{L^1 (I\times \mathbb{R}^d)}<1$ for every sub-interval $I \subset [0,t]$ of length $a$. We will use this below.

Then $\Gamma$ is a contraction on $L^{\infty}([t-a,t]\times \mathbb{R}^d)$. We conclude that $\mu$, as a solution of \eqref{eq4}, must be identically equal to zero, for all $r \in [t-a,t]$ and $x \in \mathbb R^d$. 

We now extend this conclusion to the whole interval $[0,t]$. Indeed, for $r\in [t-2a, t-a]$, we have
\begin{align*} \mu(r,t,x,\xi) 
&= i \int_r^t \mathbb{E}_{r,x}\big[\xi \cdot b(u,\omega_u)\, \mu(u,t,\omega_u,\xi)\big]\, du\\
&=i \int_r^{t-a} \mathbb{E}_{r,x}\big[\xi \cdot b(u,\omega_u)\, \mu(u,t,\omega_u,\xi)\big]\, du
+i \int_{t-a}^{t} \mathbb{E}_{r,x}\big[\xi \cdot b(u,\omega_u)\, \mu(u,t,\omega_u,\xi)\big]\, du\\
&=i \int_r^{t-a} \mathbb{E}_{r,x}\big[\xi \cdot b(u,\omega_u)\, \mu(u,t,\omega_u,\xi)\big]\, du,
\end{align*}
where we have used that $\mu=0$ on $[t-a,t]\times\mathbb{R}^d$ established in the last step.
Applying the bound \eqref{kr_bd2} once again, we obtain
\begin{align*}
&\|\mu(\cdot,t,\cdot,\xi)\|_{L^\infty ([t-2a,t-a]\times \mathbb{R}^d)}\\
&\quad \leq 
    \sup_{x\in \mathbb{R}^d} \sup_{r \in [t-2a,t-a]} |\xi| \int_r^{t-a} \mathbb{E}_{r,x}|b(u,\omega_u)|  du\, \|\mu(\cdot,t,\cdot,\xi)\|_{L^\infty ([t-2a,t-a]\times \mathbb{R}^d)}\notag\\
& \quad \le |\xi| K \|b\|^{\frac{1}{p}}_{L^1_\rho ([t-2a,t-a]\times \mathbb{R}^d)} \|\mu(\cdot,t,\cdot,\xi)\|_{L^\infty ([t-2a,t-a]\times \mathbb{R}^d)},
\end{align*}
where $|\xi| K \|b\|^{\frac{1}{p}}_{L^1_\rho ([t-2a,t-a]\times \mathbb{R}^d)}<1$. It follows immediately that $\mu=0$ for all $t-2a \leq r \leq t-a$, $x\in \mathbb{R}^d$.
Repeating this finitely many times, we obtain that $\mu$ is identically equal to zero for all $r\in [0,t]$, $x\in \mathbb{R}^d$, as claimed.
\end{proof}

Claim \ref{cl3} gives us
$$
\nu(r,t,x,\xi)=\tilde{\nu}(r,t,x,\xi),
$$
which concludes the proof of (c) $\Rightarrow$ (\textit{i}).

\medskip

\noindent \textit{Proof of} (\textit{iii}). Without loss of generality, we assume that the initial time $s=0$.
Suppose that there exist two weak solutions $\mathbb P^1_x$, $\mathbb P^2_x$ to \eqref{sde1} that satisfy, for $p>d+\alpha-1$ from (\textit{i}),
\begin{align}
\label{kr_est1}
\mathbb E_{x}^i\int_0^T |h(t,\omega_t)|dt  \leq K\|\mathbf{1}_{[0,T]}h\|_{L^p_\rho}
\end{align}
and
\begin{align}
\label{kr_est2}
\mathbb E_{x}^i\int_0^T |b(
t
,\omega_t)h(t,\omega_t)|dt  \leq K\|\mathbf{1}_{[0,T]}|b|^{\frac{1}{p}}h\|_{L^p_\rho}
\end{align}
for all $h \in (C_b \cap L^p)(\mathbb{R}^{1+d})$,
with a constant $K$ independent of $h$  ($i=1,2$).
It suffices to show that, for every $F \in C^\infty_c(\mathbb{R}^{1+d})$, 
\begin{equation}
\label{id4}
\mathbb E_x^1[\int_0^T F(t,\omega_t)dt]=\mathbb E_x^2[\int_0^T F(t,\omega_t)dt],
\end{equation}
to conclude $\mathbb P_x^1=\mathbb P_x^2$.

Proof of \eqref{id4}. Let $u_n \in C([0,T],C_\infty(\mathbb R^d))$ be the classical solution to the terminal-value problem
\begin{equation}
\label{eq_F}
(-\partial + A + b_n \cdot \nabla)u_n=F, \quad u_n(T,\cdot)=0,
\end{equation}
where, recall, the bounded smooth regularization $b_n$, $n=1,2,\dots$, of $b$ is defined via \eqref{b_n}.
 We have, by the properties of mollifiers,
$$
b_n \rightarrow b \quad \text{ in } L^{1+\varepsilon}_{\loc}(\mathbb R^{1+d},\mathbb R^d).
$$
Set $\tau_R:=\inf\{t \geq 0 \mid |\omega_t| \geq R\}$, $R>0$, i.e.\,the first exit time from the ball $B_R=B_R(0)$.
By It\^{o}'s formula (Section \ref{notations_sect}),
\begin{align}
\mathbb E_x^i u_n(T \wedge \tau_R,\omega_{T \wedge \tau_R})  & = u_n(0,x)+\mathbb E_x^i \int_0^{T \wedge \tau_R} F(t,\omega_t)dt \notag \\
&\quad  +  \mathbb E_x^i \int_0^{T \wedge \tau_R} \big[(b-b_n)\cdot \nabla u_n\big](t,\omega_t)dt \label{e_i}
\end{align}
($i=1,2$).
Our goal is to show that the last term tends to zero (see below). Applying \eqref{kr_est2} and \eqref{kr_est1}, we obtain
\begin{align*}
&\biggl| \mathbb E_x^i \int_0^{T \wedge \tau_R}  \big[(b-b_n)\cdot \nabla u_n\big](t,\omega_t)dt \biggr| \\
&\quad \leq \mathbb E_x^i \int_0^{T \wedge \tau_R} \big|b(1-\mathbf{1}_n) \cdot \nabla u_n\big|(t,\omega_t)dt 
 + \mathbb E_x^i \int_0^{T \wedge \tau_R} \big|(\mathbf{1}_n b - b_n)\cdot \nabla u_n\big|(t,\omega_t)dt\\
 & \quad\leq K\|\mathbf{1}_{[0,T] \times B_R}b^{\frac{1}{p}}(1-\mathbf{1}_n)\cdot \nabla u_n\|_{
 	p
 	} + K\|\mathbf{1}_{[0,T] \times B_R}(\mathbf{1}_n b - b_n)\cdot\nabla u_n\|_{
 p}  \\
 &\quad=: I_{n}+J_{n}.
\end{align*}

1) Let us deal with $I_n$. We fix $p_1>p$ close to $p$ so that it is admissible in Theorem \ref{thm1}(\textit{i}), and estimate
$$
\|\mathbf{1}_{[0,T] \times B_R}b^{\frac{1}{p}}(1-\mathbf{1}_n)\cdot \nabla u_n\|_{
	p
	} \leq \|\mathbf{1}_{[0,T] \times B_R}b^{\frac{1}{q}}(1-\mathbf{1}_n)\|_{
	q
	}\|b^{\frac{1}{p_1}}\cdot \nabla u_n\|_{
		p_1
		},
$$
where $\frac{1}{p}=\frac{1}{p_1}+\frac{1}{q}$.
Up to this moment we have not used the theory of equation \eqref{eq_F}, but will need it now.
Note that $\tilde{u}_n(t):=e_\lambda(t)u_n(t)$ (set here $e_\lambda(t):=e^{\lambda (t-T)}$) satisfies 
$$
(\lambda - \partial + 
A
+ b_n \cdot \nabla)\tilde{u}_n=
\mathbf{1}_{[0,T]}e_\lambda F.
$$
Hence, we can apply to $b^{\frac{1}{p_1}}\cdot \nabla u_n$ the solution representation of \ref{dualsubsect} with $f:=\mathbf{1}_{[0,T]}e_\lambda F$ after reversing the direction of time. Upon noting that
$$
b^{\frac{1}{p_1}}\cdot \nabla u_n=e_{-\lambda} R_{p_1}(1+Q_{p_1}R_{p_1})^{-1}(\lambda - \partial  +(- \Delta)^{\frac{\alpha}{2}})^{(-1+\frac{1}{\alpha})\frac{1}{p_1'}} \mathbf{1}_{[0,T]}e_\lambda F,
$$
we thus obtain
\begin{align*}
I_n  \leq K_1 e^{\lambda T} \|\mathbf{1}_{[0,T] \times B_R}b^{\frac{1}{q}}(1-\mathbf{1}_n)\|_{	q
} \|  F\|_{
		p_1
		}  \rightarrow 0 \quad \text{ as } n \rightarrow \infty. 
\end{align*}

2) Next, we deal with $J_n$. This term is easier since $\mathbf{1}_n b - b_n$ is simply the difference between the cutoff of $b$ and the mollification of this cutoff. Therefore, given any sequence $\alpha_n \downarrow 0$ and any $p_1>p$ close to $p$, by selecting $\varepsilon_n \downarrow 0$ in the definition of $b_n$ sufficiently rapidly and using the fact that we are working on the ball  $B_R$, we have
$$
J_n=K\|\mathbf{1}_{[0,T] \times B_R}|\mathbf{1}_n b - b_n||\nabla u_n|\|_{
	p
	} \leq K\alpha_n \|\nabla u_n\|_{p_1
	}.
$$
Using again the results of \ref{dualsubsect}, we obtain that $\|\nabla u_n\|_{
	p_1
	} \leq K_2 e^{\lambda T}  \| F\|_{
	p_1
	}$, and so
$$
J_n \rightarrow 0 \quad \text{ as } n \rightarrow \infty.
$$

Hence 
$$
\mathbb E_x^i \int_0^{T \wedge \tau_R} \big[(b-b_n)\cdot \nabla u_n\big](t,\omega_t)dt \rightarrow 0 \quad (n \rightarrow \infty).
$$
It remains to note, using again \ref{dualsubsect}, that solutions $u_n$ converge to a function $u \in C([0,T], C_\infty(\mathbb R^d))$. Therefore, we can pass to the limit in \eqref{e_i}, first in $n$ and then in $R \rightarrow \infty$, to obtain
$$
0 =u(0,x)+\mathbb E_x^i \int_0^{T} F(t,\omega_t)dt \quad i=1,2,
$$
which gives \eqref{id4}. 

This ends the proof of Theorem \ref{thm1}. \hfill \qed

\bigskip

\section{Proof of Theorem \ref{thm3}}
(\textit{i})
Set
$
A:=\sum_{i=1}^N (-\Delta)^{\frac{\alpha}{2}}_{x^i}.
$
In order to apply the argument of \cite{KM}, we need to establish the following:

a) Pointwise bound
 $$
 |\nabla_x (\lambda+A)^{-1}(x-y)| \leq c_{d,N,\alpha}(\lambda+A)^{-1+\frac{1}{\alpha}}(x-y)
 $$
 for all $x,y \in \mathbb R^{dN}$, $x \neq y$. Indeed, recall that, for $z \in \mathbb R^d$,
 $$
 e^{-t(-\Delta)^{\frac{\alpha}{2}}_{\mathbb R^d}}(z) \approx t\bigl(|z|^{-d-\alpha} \wedge t^{-\frac{d+\alpha}{\alpha}}\bigr)
$$ 
and, using \cite[proof of Lemma 5]{BJ},
\begin{align*}
|\nabla_z e^{-t(-\Delta)^{\frac{\alpha}{2}}_{\mathbb R^d}}(z)| & \leq K_1 t\bigl( |z|^{-d-\alpha-1} \wedge t^{-\frac{d+\alpha+1}{\alpha}}\bigr) \\
& (\text{use the previous estimate}) \\
& \leq K_2 t^{-\frac{1}{\alpha}}e^{-t(-\Delta)^{\frac{\alpha}{2}}_{\mathbb R^d}}(z).
\end{align*}
Therefore, taking into account the factorization of $e^{-tA}$, for all $x \in \mathbb R^{dN}$,
\begin{align*}
|\nabla_{x^i} e^{-t A}(x)| & \leq |\nabla_{x^i }e^{-t(-\Delta)_{x^i}^{\frac{\alpha}{2}}}(x^i)|\prod_{j \neq i}e^{-t(-\Delta)_{x^j}^{\frac{\alpha}{2}}}(x^j) \\
& \leq K_2 t^{-\frac{1}{\alpha}}e^{-t A}(x).
\end{align*}
Hence 
$$
|\nabla_x e^{-tA}(x)| \leq C t^{-\frac{1}{\alpha}}e^{-t A}(x).
$$
Multiplying the previous inequality by $e^{-\lambda t}$ and integrating in $t$ from $0$ to $\infty$, we arrive at the sought bound using $(\lambda+A)^{-\frac{\gamma}{2}}(x)=\frac{1}{\Gamma(\frac{\gamma}{2})} \int_0^\infty e^{-\lambda t}t^{\frac{\gamma}{2}-1} e^{-tA}(x)dt$ ($0<\gamma \leq 2$).

b) The many-particle drift $b=(b^1,\dots,b^N)$,
$$
b_i(x)=\frac{1}{N}\sum_{j=1, j \neq i}^N K(x^i-x^j),
$$
is weakly form-bounded on $\mathbb R^{dN}$ with respect to $A$. This follows by first noting that $K_{ij}(x):=K(x^i-x^j)$ satisfies
$$
\||K_{ij}|^{\frac{1}{2}}(\lambda+(-\Delta)_{x^i}^{\frac{\alpha}{2}})^{-\frac{1}{2}\frac{\alpha-1}{\alpha}}\|_{L^2(\mathbb R^{dN}) \rightarrow L^2(\mathbb R^{dN})} \leq \sqrt{\delta}
$$
and therefore
$
\||K_{ij}|^{\frac{1}{2}}(\lambda+A)^{-\frac{1}{2}\frac{\alpha-1}{\alpha}}\|_{L^2(\mathbb R^{dN}) \rightarrow L^2(\mathbb R^{dN})} \leq \sqrt{\delta},
$
so it remains to add up these inequalities, taking into account the normalization by $\frac{1}{N}$, in order to arrive at
$$
\||b|^{\frac{1}{2}}(\lambda+A)^{-\frac{1}{2}\frac{\alpha-1}{\alpha}}\|_{L^2(\mathbb R^{dN}) \rightarrow L^2(\mathbb R^{dN})} \leq \sqrt{N-1}\sqrt{\delta}.
$$

Armed with a), b) and using the fact that $A$ is a symmetric Markov generator, we obtain the estimate
\begin{equation*}
\|b^{\frac{1}{p}} \cdot \nabla (\lambda+A)^{-1}|b|^{\frac{1}{p'}}\|_{L^p(\mathbb R^{dN}) \rightarrow L^p(\mathbb R^{dN})} \leq c_{d,\alpha,p}(N-1)\delta
\end{equation*} 
and repeat the argument of \cite{KM}, i.e.\,postulate the Neumann series for $A +b \cdot \nabla$, show that it determines the resolvent of a Feller generator, and then use the argument of Podolynny-Portenko and Chen-Wang -- i.e.\,the one that we also employ in the present paper -- to show that the Feller semigroup indeed delivers weak solutions to the particle system.

\medskip

(\textit{ii}) It suffices for us to show that the drift $K \star w$ satisfies $\||K \star w|^{\frac{1}{\alpha-1}}\|_{E_{1+\epsilon}} \leq \||K|^{\frac{1}{\alpha-1}}\|_{M_{1+\epsilon}}~\big(\leq c_{d,\alpha,p,\epsilon}\big)$ and then apply the regularity result of Theorem \ref{thm2}. Indeed, 
 \begin{align*}
\||K \star w|^{\frac{1}{\alpha-1}}\|_{E_{1+\epsilon}} & =\sup_{(t,x) \in \mathbb{R}^{1+d}, r>0} r \biggl(\frac{1}{|C_r^\alpha|}\int_{C_r^\alpha(t,x)}|K \star w|^{\frac{1+\epsilon}{\alpha-1}}  \biggr)^{\frac{1}{1+\epsilon}} \\
& = \sup_{(t,x) \in \mathbb{R}^{1+d}, r>0} r \biggl(\frac{1}{|C_r^\alpha|}\int_t^{t+r^\alpha}\|K \star w\|^{\frac{1+\epsilon}{\alpha-1}}_{L^{\frac{1+\epsilon}{\alpha-1}}(B_r(x))} ds\biggr)^{\frac{1}{1+\epsilon}} \\
& = \sup_{(t,x) \in \mathbb{R}^{1+d}, r>0} r \biggl(\frac{1}{|C_r^\alpha|}\int_t^{t+r^\alpha}\big\|\int_{\mathbb R^d}K(\cdot-y)w(s,y)dy\big\|^{\frac{1+\epsilon}{\alpha-1}}_{L^{\frac{1+\epsilon}{\alpha-1}}(B_r(x))} ds \biggr)^{\frac{1}{1+\epsilon}} \\
& \leq \sup_{(t,x) \in \mathbb{R}^{1+d}, r>0} r \biggl(\frac{1}{|C_r^\alpha|}\int_t^{t+r^\alpha}\biggl(\int_{\mathbb R^d}\big\|K(\cdot-y)\big\|_{L^{\frac{1+\epsilon}{\alpha-1}}(B_r(x))} w(s,y)dy\biggr)^{\frac{1+\epsilon}{\alpha-1}} ds \biggr)^{\frac{1}{1+\epsilon}} \\
& \le \sup_{(t,z) \in \mathbb{R}^{1+d}, r>0} r \biggl(\frac{1}{|C_r^\alpha|}\|K\|^{\frac{1+\epsilon}{\alpha-1}}_{L^{\frac{1+\epsilon}{\alpha-1}}(B_r(z))}  \int_t^{t+r^\alpha}\biggl(\int_{\mathbb R^d}w(s,y)dy\biggr)^{\frac{1+\epsilon}{\alpha-1}} ds \biggr)^{\frac{1}{1+\epsilon}} \\
& \leq \sup_{z \in \mathbb R^{d}, r>0} r \biggl(\frac{1}{|B_r|}\|K\|^{\frac{1+\epsilon}{\alpha-1}}_{L^{\frac{1+\epsilon}{\alpha-1}}(B_r(z))}\biggr)^{\frac{1}{1+\epsilon}}=\||K|^{\frac{1}{\alpha-1}}\|_{M_{1+\epsilon}},
\end{align*}
where at the last step we have used $\int_{\mathbb R^d}w(s,y)dy \leq 1$. (Above we have, in fact, reproved for the reader's convenience a special case of Young's inequality for Morrey spaces \cite{BT}.)
\hfill \qed

\bigskip

\appendix

\section{Proof of Proposition \ref{prop1}}
\label{prop1_proof_sect}

The following proof is taken from \cite{D1}. It suffices to show:

\medskip

\begin{enumerate}
\item[]
\textit{Let
$
|b|^{\frac{1}{\alpha-1}} \in E_{q}
$ for some $1<q<d+\alpha$.
Then for every $1<p<\infty$ there exists constant $C_{d,p,q,\alpha}$ such that
\begin{align}
\label{req_ineq}
|\langle |b|(P_{(1-\frac{1}{\alpha})\frac{2}{p}}f)^p\rangle| & \leq C_{d,p,q,\alpha}^p\||b|^{\frac{1}{\alpha-1}}\|_{E_q}^{\alpha-1}\|f\|_p^p, \\
|\langle |b|(P^\ast_{(1-\frac{1}{\alpha})\frac{2}{p}}f)^p\rangle| & \leq C_{d,p,q,\alpha}^p\||b|^{\frac{1}{\alpha-1}}\|_{E_q}^{\alpha-1}\|f\|_p^p, \notag
\end{align}
where 
\begin{align}
\label{req_ineq2}
P_\gamma f(t,x) & :=\int_{\mathbb{R}^{1+d}}p_\gamma(s,y)f(t+s,x+y)dyds, \\
P_\gamma^\ast f(t,x) & :=\int_{\mathbb{R}^{1+d}}p_\gamma(s,y)f(t-s,x+y)dyds
\end{align}
on $f \in C_c(\mathbb{R}^{1+d})$. }
\end{enumerate}

Indeed, by \eqref{ul_bounds} we have 
\begin{align*}
P_\gamma f & \geq C^{-1}(-\partial_t+(-\Delta)^{\frac{\alpha}{2}})^{-\frac{\gamma}{2}} f, \\
P_\gamma^\ast f & \geq C^{-1}(\partial_t+(-\Delta)^{\frac{\alpha}{2}})^{-\frac{\gamma}{2}} f
\end{align*}
on positive $f$, and so the assertion of Proposition \ref{prop1} follows.

We will only prove \eqref{req_ineq}. Since our regularization $\{b_n\}$ of $b$ can be treated as essentially monotone (i.e.\,at the level of Euclidean norms), we will consider $b_n:=\mathbf{1}_n b$, where $\mathbf{1}_n$ is the indicator of $\{|t| \leq n, |x| \leq n, |b(t,x)| \leq n\}$ and then use the Dominated convergence theorem.

We will need the following notations and estimates. For brevity, below we write $b$ instead of $b_n$.
Set
$$
M_\beta f(t,x):=\sup_{\rho>0}\rho^\beta \frac{1}{|C_\rho(t,x)|}\int_{C_\rho(t,x)} |f(s,y)| dy ds, \quad 0 \leq \beta \leq d+\gamma,
$$
where we put for brevity $C_\rho:=C_\rho^\alpha$,
and define the maximal function $M:=M_0$.
Also, define 
$$
\hat{M} f(t,x):=\sup_{(t,x) \in C} \frac{1}{|C|}\int_{C} |f| dy ds,
$$
where the supremum is taken over all parabolic $\alpha$-cylinders $C$ containing $(t,x)$. This function is required for harmonic-analytic arguments that are transferred from the elliptic setting (i.e.\,from the cubes). The fact that our ``cubes'' $C$ are squeezed in one of the coordinates presents no problem.

\begin{lemma}
\label{lem1}
The following are true for every $\beta \in ]\frac{\alpha \gamma}{2},d+\gamma]$, for all $(t,x) \in \mathbb{R}^{1+d}$:

\smallskip

{\rm (\textit{i})}~For all $\rho>0$, we have
\begin{align*}
P_\gamma(\mathbf{1}_{C^c_\rho}f)(t,x)  \leq K \rho^{\frac{\alpha \gamma}{2}-\beta }M_\beta f(t,x), & \qquad K:=\frac{1}{\beta-\frac{\alpha\gamma}{2}}\bigl(-\frac{\alpha \gamma}{2}+d+\alpha\bigr), \\[2mm]
P_\gamma(\mathbf{1}_{C_\rho}f)(t,x)  \leq N \rho^{\frac{\alpha \gamma}{2}}Mf(t,x), & \qquad N:= \frac{2}{\alpha \gamma}\bigl(-\frac{\alpha\gamma}{2}+d+\alpha\bigr)
\end{align*}
for all $0 \leq f \in C_c(\mathbb{R}^{1+d})$.

\smallskip

{\rm (\textit{ii})}~$$
P_\gamma f(t,x) \leq C (M_\beta f(t,x))^{\frac{\alpha \gamma}{2}\frac{1}{\beta}}(M f(t,x))^{1-\frac{\alpha \gamma}{2}\frac{1}{\beta}}, \quad C:=K^{\frac{\alpha \gamma}{2}\frac{1}{\beta}}N^{1-\frac{\alpha \gamma}{2}\frac{1}{\beta}}. 
$$

\end{lemma}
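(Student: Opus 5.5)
This is Hedberg's trick in the parabolic setting: split $P_\gamma f$ into a near part (inside a parabolic cylinder $C_\rho$) and a far part, bound each by a maximal function, and then optimize over $\rho$.

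Here $P_\gamma(\mathbf{1}_{C_\rho}f)(t,x)$ means that the integration variable $(t+s,x+y)$ is restricted to $C_\rho(t,x)$, i.e.\ $0\le s\le\rho^\alpha$ and $|y|\le\rho$.

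\emph{Dyadic decomposition.} Decompose $\mathbb R^{1+d}$ into dyadic parabolic shells $C_{2^{k+1}\rho}\setminus C_{2^k\rho}$. Only $s>0$ contributes, because $p_\gamma$ is supported in $t>0$, which matches the forward cylinders. On the shell at scale $r=2^k\rho$ either $|y|\ge r$ or $s\ge r^\alpha$, and in both cases
$$
p_\gamma(s,y)\le C r^{\frac{\alpha\gamma}{2}-d-\alpha}.
$$
To see this, if $|y|\ge r$ and $|y|\ge s^{1/\alpha}$, then $p_\gamma = s^{\gamma/2}|y|^{-d-\alpha}$. We have $s\le (2r)^\alpha$ on the shell, and the exponent $\frac{\gamma}{2}-1\ge -\frac{d+\alpha}{\alpha}$ in the same sense as below, so this is at most $Cr^{\frac{\alpha\gamma}{2}-d-\alpha}$. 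If instead $s\ge r^\alpha$, use $p_\gamma\le s^{\frac{\gamma}{2}-\frac{d+\alpha}{\alpha}}$. The exponent $\frac{\gamma}{2}-\frac{d+\alpha}{\alpha}$ is negative because $\gamma\le2$, so this is at most $r^{\frac{\alpha\gamma}{2}-d-\alpha}$.

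\emph{Far part.} For $C^c_\rho$, sum over $k\ge0$:
$$
\sum_k (2^k\rho)^{\frac{\alpha\gamma}{2}-d-\alpha}\int_{C_{2^{k+1}\rho}}|f| \le \sum_k (2^k\rho)^{\frac{\alpha\gamma}{2}-d-\alpha}\,|C_{2^{k+1}\rho}|\,(2^{k+1}\rho)^{-\beta}M_\beta f.
$$
Since $|C_r|\approx r^{d+\alpha}$, the $k$-th term is comparable to $(2^k\rho)^{\frac{\alpha\gamma}{2}-\beta}M_\beta f$. The geometric series converges exactly because $\beta>\frac{\alpha\gamma}{2}$, giving $K\rho^{\frac{\alpha\gamma}{2}-\beta}M_\beta f$.

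\emph{Near part.} For $C_\rho$, sum over $k<0$ in the same way with $M=M_0$, which bounds the averages. Each term is now $\lesssim(2^k\rho)^{\frac{\alpha\gamma}{2}}Mf$, and the series converges since $\gamma>0$, giving $N\rho^{\frac{\alpha\gamma}{2}}Mf$.

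The stated explicit constants $K,N$ suggest that the authors avoid dyadic sums and instead use a layer-cake formula. They write $p_\gamma$ as radially decreasing in the parabolic quasi-norm and integrate $\int |f|\,\mathbf{1}_{C_r}\,d(-\phi(r))$, where $\phi(r)\approx r^{\frac{\alpha\gamma}{2}-d-\alpha}$ has derivative of size $\left(d+\alpha-\frac{\alpha\gamma}{2}\right)r^{\frac{\alpha\gamma}{2}-d-\alpha-1}$. I would do it that way to recover precisely $K=\frac{d+\alpha-\alpha\gamma/2}{\beta-\alpha\gamma/2}$ and $N=\frac{2}{\alpha\gamma}\left(d+\alpha-\frac{\alpha\gamma}{2}\right)$. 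The main obstacle is that the level sets of $p_\gamma$ are not exactly the cylinders $C_r$. I would handle this by dominating $p_\gamma$ by a function of $\max\left(|y|,s^{1/\alpha}\right)$ alone, accepting dimensional constants, then write $\int\phi\,|f| = \int_0^\infty(-\phi'(r))\int_{C_r}|f|\,dr$ and split at $r=\rho$.

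For (ii), add the two bounds and choose $\rho$ to balance them:
$$
\rho^\beta=\frac{M_\beta f}{Mf}.
$$
Then $\rho^{\frac{\alpha\gamma}{2}}Mf=(M_\beta f)^{\frac{\alpha\gamma}{2\beta}}(Mf)^{1-\frac{\alpha\gamma}{2\beta}}$, and the far term gives the same product. Optimizing $K\rho^{a-\beta}X+N\rho^{a}Y$ with $a=\frac{\alpha\gamma}{2}$ exactly yields a constant of the form $K^{a/\beta}N^{1-a/\beta}$, up to a harmless numeric factor, which I would absorb or avoid by a weighted choice of $\rho$. The degenerate cases $Mf=0$ or $M_\beta f=\infty$ are trivial.
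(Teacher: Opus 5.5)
Your argument is correct, but the dyadic shell decomposition is a different, discrete route from the paper's.

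\textbf{The paper's route.} The paper splits $\{s>0\}$ into $Q=\{|y|\ge s^{1/\alpha}\}$, where $p_\gamma\le|y|^{\frac{\alpha\gamma}{2}-d-\alpha}$, and $Q^c$, where $p_\gamma=s^{\frac{\gamma}{2}-\frac{d}{\alpha}-1}$. In each region it integrates by parts, in $r=|y|$ via polar coordinates on $Q$ and in $s$ on $Q^c$. Everything reduces to $\int r^{\frac{\alpha\gamma}{2}-d-\alpha-1}I(r)\,dr$ with $I(r)=\int_{C_r}f$, plus a boundary term $\rho^{\frac{\alpha\gamma}{2}-d-\alpha}I(\rho)$ in the near part. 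It then bounds $I(r)$ by $M_\beta f$ for $r>\rho$ and by $Mf$ for $r<\rho$.

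\textbf{Comparison.} This continuous version gives closed-form constants like $(d+\alpha-\frac{\alpha\gamma}{2})/(\beta-\frac{\alpha\gamma}{2})$, where your sums give $(1-2^{\frac{\alpha\gamma}{2}-\beta})^{-1}$-type factors. Your version needs only the size bound on $p_\gamma$ and volume doubling of $C_r$. Appendix A uses (ii) only with some constant, so either route suffices.

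\textbf{Your layer-cake alternative.} This is a streamlined form of the paper's proof, and the obstacle you anticipate does not occur. On $Q$ and on $Q^c$ one checks that $p_\gamma(s,y)\le\max(|y|,s^{1/\alpha})^{\frac{\alpha\gamma}{2}-d-\alpha}$ with constant $1$. Moreover, the sublevel sets of $\max(|y|,s^{1/\alpha})$ in $\{s\ge 0\}$ are exactly the cylinders $C_r$. A single layer-cake integration then gives the far bound with constant $|B_1|K$ and the near bound with constant $|B_1|\frac{2(d+\alpha)}{\alpha\gamma}=|B_1|(N+1)$. These are half of what the paper's two-region computation actually yields. The paper's stated values also omit $|B_1|$, the factor $2$ from adding the two regions, and the $+1$. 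So the stated $K,N$ hold only up to numerical factors in any case, and not matching them exactly is not a defect of your proof.

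\textbf{One correction.} The numerical factor in (ii) cannot be removed by a weighted choice of $\rho$. Put $a=\frac{\alpha\gamma}{2}$ and $\tau=a/\beta\in(0,1)$. Then $\min_{\rho>0}\bigl(K\rho^{a-\beta}X+N\rho^{a}Y\bigr)=\tau^{-\tau}(1-\tau)^{-(1-\tau)}K^{\tau}N^{1-\tau}X^{\tau}Y^{1-\tau}$, and the prefactor $\tau^{-\tau}(1-\tau)^{-(1-\tau)}$ is strictly larger than $1$. The paper's ``minimizing in $\rho$'' has the same slack, and it is harmless.
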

\begin{proof}[Proof of Lemma]
We repeat Krylov \cite[proof of Lemma 2.2]{Kr1} with some straightforward modifications needed to accommodate integral kernel $p_\gamma(s,y):=\mathbf{1}_{\{s>0\}} \, s^{\frac{\gamma}{2}}(|y|^{-d-\alpha} \wedge s^{-\frac{d+\alpha}{\alpha}})$ instead of the Gaussian density $(4\pi s)^{-\frac{d}{2}}e^{-|y|^2/s}$ times $s^{\frac{\gamma}{2}-1}$. 

\smallskip

(\textit{i}) It suffices to carry out the proof for $(t,x)=(0,0)$. Below $s \geq 0$, $y \in \mathbb R^d$. Set $$Q:=\{(s,y) \mid |y| \geq s^{\frac{1}{\alpha}}\}, \quad Q^c:=\{(s,y) \mid |y| < s^{\frac{1}{\alpha}}\}.$$ With a slight abuse of notation, we write $p_\gamma(s,r)$ instead of $p_\gamma(s,y)$, where $r=|y|$.

The following result is obtained using integrating by parts. Let $\xi>0$ and $g:[0,\infty[ \rightarrow [0,\infty[$ be such that
$
t^{-\xi}\int_0^t g(s) ds \rightarrow 0$ as $t \uparrow \infty.
$
Then, for every $\tau \geq 0$,
\begin{equation}
\label{lem0}
\int_\tau^\infty t^{-\xi }g(t)dt \leq \xi \int_\tau^\infty t^{-\xi-1}\biggl(\int_\tau^t g(s)ds\biggr) dt.
\end{equation}

\medskip

1.~If $r \geq s^{\frac{1}{\alpha}}$, then $p_\gamma(s,r) \leq r^{\frac{\alpha \gamma}{2}-d-\alpha}$. Therefore,
\begin{align*}
P_\gamma(f\mathbf{1}_{Q \cap C_\rho^c})(0) & \leq \int_0^\infty\int_{\mathbb R^{d}} |y|^{\frac{\alpha \gamma}{2}-d-\alpha}f(s,y)\mathbf{1}_{Q \cap C_\rho^c}dy ds \\
& = \int_{\{0 \leq s \leq |y|^\alpha, |y| \geq \rho\}} |y|^{\frac{\alpha \gamma}{2}-d-\alpha}f(s,y)dy ds \\
& = \int_{\rho}^\infty r^{\frac{\alpha \gamma}{2}-d-\alpha} \int_0^{r^\alpha} \int_{\{|y|=r\}}f(s,y)d\sigma_r ds dr \\
& (\text{we apply \eqref{lem0} with $\xi \equiv C_1=-\frac{\alpha \gamma}{2}+d+\alpha$}) \\
& \leq C_1\int_{\rho}^\infty r^{\frac{\alpha \gamma}{2}-d-\alpha-1} \int_{\rho}^r \int_0^{\theta^\alpha} \int_{\{|y|=\theta\}} f(s,y)d\sigma_\theta ds d\theta dr\\
& \leq C_1\int_{\rho}^\infty r^{\frac{\alpha \gamma}{2}-d-\alpha-1} \int_{0}^r \int_0^{\theta^\alpha} \int_{\{|y|=\theta\}} f(s,y)d\sigma_\theta ds d\theta dr\\
& = C_1\int_{\rho}^\infty r^{\frac{\alpha \gamma}{2}-d-\alpha-1} I(r)dr,
\end{align*}
where  $$I(r):=\int_{C_r}f(s,y) ds dy.$$
Clearly,
$
I(r) \leq r^{d+\alpha-\beta}M_\beta f(0),
$
so
\begin{align*}
P_\gamma(f\mathbf{1}_{Q \cap C_\rho^c})(0) & \leq C_1\int_{\rho}^\infty r^{\frac{\alpha \gamma}{2}-d-\alpha-1} r^{d+\alpha-\beta} M_\beta f(0) \\
& \leq K \rho^{\frac{\alpha \gamma}{2}-\beta} M_\beta f(0), \qquad K=\frac{1}{\beta-\frac{\alpha\gamma}{2}}C_1 \equiv \frac{1}{\beta-\frac{\alpha\gamma}{2}}\bigl(-\frac{\alpha \gamma}{2}+d+\alpha\bigr),
\end{align*}
where we have used the hypothesis $\beta>\frac{\alpha \gamma}{2}$.

\medskip

2.\,If $r \leq s^{\frac{1}{\alpha}}$, then $p_\gamma(s,r) = s^{\frac{\gamma}{2}-\frac{d}{\alpha}-1}$. Therefore,
\begin{align*}
P_\gamma(f\mathbf{1}_{Q^c \cap C_\rho^c})(0) & = \int_{\rho^\alpha}^\infty s^{\frac{\gamma}{2}-\frac{d}{\alpha}-1}\int_{\{|y| \leq s^{\frac{1}{\alpha}}\}}f(s,y) dy ds \\
& (\text{we apply \eqref{lem0} with $\xi \equiv C_2:=-\frac{\gamma}{2}+\frac{d}{\alpha}+1$}) \\
& \leq C_2\int_{\rho^\alpha}^\infty s^{\frac{\gamma}{2}-\frac{d}{\alpha}-2} \int_{\rho^\alpha}^s\int_{|y| \leq \vartheta^{\frac{1}{\alpha}}} f(\vartheta,y) dy d\vartheta ds \\
& \leq C_2\int_{\rho^\alpha}^\infty s^{\frac{\gamma}{2}-\frac{d}{\alpha}-2} \int_0^s\int_{|y| \leq \vartheta^{\frac{1}{\alpha}}} f(\vartheta,y) dy d\vartheta ds \\
& = C_2\int_{\rho^\alpha}^\infty s^{\frac{\gamma}{2}-\frac{d}{\alpha}-2} I(s^{\frac{1}{\alpha}})ds  = \alpha C_2 \int_\rho^\infty r^{\frac{\alpha \gamma}{2} - d-\alpha -1 }I(r)dr \\
& \leq  \alpha C_2 \int_\rho^\infty r^{\frac{\alpha \gamma}{2} - d-\alpha -1 } r^{d+\alpha-\beta} dr M_\beta f(0) \\
& = K \rho^{\frac{\alpha \gamma}{2}-\beta} M_\beta f(0)
\end{align*}
(we have used $\beta>\frac{\alpha \gamma}{2}$).
Combined with Step 1, this yields the first inequality of Lemma \ref{lem1}(\textit{i}).

\medskip

Let us prove the second inequality in Lemma \ref{lem1}(\textit{i}):

\medskip

3.~We have, using integration by parts,
\begin{align*}
P_\gamma(f\mathbf{1}_{Q \cap C_\rho})(0) & \leq \int_0^\rho r^{\frac{\alpha\gamma}{2}-d-\alpha} \int_0^{r^\alpha} \int_{\{|y|=r\}} f(s,y)d\sigma_r ds dr \\
& \leq \int_0^\rho r^{\frac{\alpha\gamma}{2}-d-\alpha} \frac{\partial}{\partial r}\int_0^r \int_0^{\theta^\alpha} \int_{\{|y|=\theta\}} f(s,y) d\sigma_\theta ds d\theta dr \\
& = J_1 + C_3\int_0^{\rho} r^{\frac{\alpha\gamma}{2}-d-\alpha-1}\int_0^r \int_0^{\theta^\alpha}\int_{\{|y|=\theta\}} f(s,y) d\sigma_\theta ds d\theta dr \\
& \leq J_1 + C_3\int_0^\rho  r^{\frac{\alpha\gamma}{2}-d-\alpha-1} I(r)dr,
\end{align*}
where $C_3:=-\frac{\alpha\gamma}{2}+d+\alpha$ and
\begin{align*}
J_1 & :=\rho^{\frac{\alpha\gamma}{2}-d-\alpha}\int_0^\rho \int_0^{\theta^\alpha}\int_{\{|y|=\theta\}} f(s,y)d\sigma_\theta ds d\theta\\
&  \leq \rho^{\frac{\alpha\gamma}{2}-d-\alpha}I(\rho).
\end{align*}
Since $I(\rho) \leq \rho^{d+\alpha}Mf(0)$, we have
\begin{align*}
P_\gamma(f\mathbf{1}_{Q \cap C_\rho})(0) & \leq \rho^{\frac{\alpha\gamma}{2}-d-\alpha}\rho^{d+\alpha}Mf(0) + C_3  \int_0^\rho  r^{\frac{\alpha\gamma}{2}-d-\alpha-1} r^{d+\alpha} dr Mf(0) \\
& = \rho^{\frac{\alpha\gamma}{2}}Mf(0) + \frac{2C_3}{\alpha \gamma}\rho^{\frac{\alpha\gamma}{2}}Mf(0) \\
& = N \rho^{\frac{\alpha\gamma}{2}}Mf(0), \qquad N=1+\frac{2C_3}{\alpha \gamma} \equiv \frac{2}{\alpha \gamma}\bigl(-\frac{\alpha\gamma}{2}+d+\alpha\bigr).
\end{align*}

4.~Next,
\begin{align*}
P_\gamma(f\mathbf{1}_{Q^c \cap C_\rho})(0) & = \int_0^{\rho^\alpha} s^{\frac{\gamma}{2}-\frac{d}{\alpha}-1}\int_{\{|y| \leq s^{\frac{1}{\alpha}}\}}f(s,y) dy ds \\
& \leq J_2+ C_4 \int_0^{\rho^\alpha} s^{\frac{\gamma}{2}-\frac{d}{\alpha}-2} I(s^{\frac{1}{\alpha}}) ds \\
& = J_2 + \alpha C_4\int_0^\rho r^{\frac{\alpha \gamma}{2}-d-\alpha-1} I(r)dr,
\end{align*}
where $C_4=-\frac{\gamma}{2}+\frac{d}{\alpha}+1$ and
\begin{align*}
J_2 & :=\rho^{\frac{\alpha\gamma}{2}-d-\alpha}\int_0^{\rho^\alpha}\int_{\{|y| \leq \vartheta^{\frac{1}{\alpha}}\}} f(\vartheta,y)dy d\vartheta  \\
& \leq \rho^{\frac{\alpha\gamma}{2}-d-\alpha}I(\rho).
\end{align*}
Thus, applying again inequality $I(\rho) \leq \rho^{d+\alpha}Mf(0)$, we obtain 
$P_\gamma(f\mathbf{1}_{Q^c \cap C_\rho})(0) \leq N\rho^{\frac{\alpha\gamma}{2}}Mf(0)$. Together with the result of step 1,
this completes the proof of the second inequality in Lemma \ref{lem1}(\textit{i}). 

\medskip

(\textit{ii}) is obtained from (\textit{i}) by adding up both inequalities and minimizing in $\rho$. 
\end{proof}

Armed with Lemma \ref{lem1}(\textit{ii}), we now prove \eqref{req_ineq}. Put $u:=P_{(1-\frac{1}{\alpha})\frac{2}{p}}f$. Then 
\begin{align}
|\langle |b|(P_{(1-\frac{1}{\alpha})\frac{2}{p}}f)^p\rangle| &= |\langle |b||u|^{p-1},P_{(1-\frac{1}{\alpha})\frac{2}{p}}f\rangle| \notag \\
& \leq |\langle P^\ast_{(1-\frac{1}{\alpha})\frac{2}{p}}(|b||u|^{p-1}),f\rangle| \leq \|P^\ast_{(1-\frac{1}{\alpha})\frac{2}{p}}(|b||u|^{p-1})\|_{p'}\|f\|_p. \label{ineq2}
\end{align}
Thus, to obtain \eqref{req_ineq}, we need to bound the coefficient $\|P^\ast_{(1-\frac{1}{\alpha})\frac{1}{p}}(|b||u|^{p-1})\|_{p'}$. 
To this end, we apply pointwise estimate
\begin{align*}
P_{(1-\frac{1}{\alpha})\frac{2}{p}}^\ast(|b||u|^{p-1}) = P_{(1-\frac{1}{\alpha})\frac{2}{p}}^\ast(|b|^{\frac{1}{p}+\nu} |b|^{\frac{1}{p'}-\nu}|u|^{p-1}) \leq P_{(1-\frac{1}{\alpha})\frac{2}{p}}^\ast(|b|^{1+\nu p})^{\frac{1}{p}} (P_{(1-\frac{1}{\alpha})\frac{2}{p}}^\ast(|b|^{1-\nu p'}|u|^p))^{\frac{1}{p'}}
\end{align*} 
for a small $\nu>0$ such that $1+\nu p < q_0$ for some fixed $q_0<q$. 
Hence 
\begin{align}
\|P^\ast_{(1-\frac{1}{\alpha})\frac{1}{p}}(|b||u|^{p-1})\|_{p'}^{p'} \leq \langle |b|^{1-\nu p'}|u|^p, P_{(1-\frac{1}{\alpha})\frac{2}{p}}[P^\ast_{(1-\frac{1}{\alpha})\frac{2}{p}}(|b|^{1+\nu p})]^{\frac{1}{p-1}}\rangle. \label{PP}
\end{align}

Step 1.\,Applying  Lemma \ref{lem1}(\textit{ii}) with $\gamma:=(1-\frac{1}{\alpha})\frac{2}{p}$, $\beta:=(\alpha-1)(1+\nu p)$  (clearly, $\beta>\frac{\alpha \gamma}{2}$) or, rather, its straightforward variant for $P_\alpha^\ast$ and noting that in this case
$$
\frac{\alpha \gamma}{2}\frac{1}{\beta}=\frac{1}{p}\frac{1}{1+\nu p},
$$ 
and (this will be needed a few lines later)
\begin{align*}
M_{(\alpha-1)(1+\nu p)}|b|^{1+\nu p}(t,x) & =\sup_{\rho>0}\rho^{(\alpha-1)(1+\nu p)}\frac{1}{|C_\rho(t,x)|}\int_{C_\rho(t,x)} |b(s,y)|^{1+\nu p} dy ds \\
& \leq \sup_{\rho>0} \rho^{(\alpha-1)(1+\nu p)} \biggl(\frac{1}{|C_\rho(t,x)|}\int_{C_\rho(t,x)} |b(s,y)|^{\frac{1+\nu p}{\alpha-1}} dy ds \biggr)^{\alpha-1} \\
& = \biggl[\sup_{\rho>0} \rho \biggl(\frac{1}{|C_\rho(t,x)|}\int_{C_\rho(t,x)} |b(s,y)|^{\frac{1+\nu p}{\alpha-1}} dy ds \biggr)^{\frac{1}{1+\nu p}}\biggr]^{(\alpha-1)(1+\nu p)} \\[3mm]
& \leq \||b|^{\frac{1}{\alpha-1}}\|_{E_{1+\nu p}}^{(\alpha-1)(1+\nu p)}.
\end{align*}
we obtain 
\begin{align*}
P^\ast_{(1-\frac{1}{\alpha})\frac{2}{p}}(|b|^{1+\nu p}) & \leq N (M_{(\alpha-1)(1+\nu p)}|b|^{1+\nu p})^{\frac{1}{p}\frac{1}{1+\nu p}}(M |b|^{1+\nu p})^{1-\frac{1}{p}\frac{1}{1+\nu p}}\\
& \leq C\||b|^{\frac{1}{\alpha-1}}\|_{E_{1+\nu p}}^{\frac{\alpha-1}{p}}(\hat{M}|b|^{1+\nu p})^{1-\frac{1}{p}\frac{1}{1+\nu p}} \\
& \leq C\||b|^{\frac{1}{\alpha-1}}\|_{E_{q_0}}^{\frac{\alpha-1}{p}}(\hat{M}|b|^{1+\nu p})^{1-\frac{1}{p}\frac{1}{1+\nu p}}.
\end{align*}
At this point, let us assume that $|b|^{1+\nu p}$ is an $A_1$-weight, i.e.
\begin{equation}
\label{a1}
\hat{M}|b|^{1+\nu p} \leq C_0 |b|^{1+\nu p}
\end{equation}
for some constant $C_0$
(we will get rid of this assumption later).

\begin{remark}
Up to this point all constants were explicit. Now we picked up constant $C_0$ which, as will be explained below (Step 4), depends on the constants in some classical inequalities of harmonic analysis.
\end{remark}

Then
$$
P^\ast_{(1-\frac{1}{\alpha})\frac{2}{p}}(|b|^{1+\nu p}) \leq C_2\|b\|_{E_{q_0}}^{\frac{1}{p}}|b|^{1+\nu p - \frac{1}{p}}.
$$

Step 2.\,After applying the previous estimate in \eqref{PP}, one sees that now we need to estimate
$$
P_{(1-\frac{1}{\alpha})\frac{2}{p}} (|b|^{(1+\nu p - \frac{1}{p})\frac{1}{p-1}})=P_{(1-\frac{1}{\alpha})\frac{2}{p}} (|b|^{\frac{1}{p}+\nu p'}).
$$
Selecting $\nu$ even smaller, if needed, one may assume that $\frac{1}{p}+\nu p'<q_0$.
By  Lemma \ref{lem1}(\textit{ii}) with $\gamma:=(1-\frac{1}{\alpha})\frac{2}{p}$, $\beta:=(\alpha-1)(\frac{1}{p}+\nu p')$ (in this case, again, $\beta>\frac{\alpha \gamma}{2}$),
\begin{align*}
P_{(1-\frac{1}{\alpha})\frac{2}{p}} (|b|^{\frac{1}{p}+\nu p'}) & \leq C (M_{(\alpha-1)(\frac{1}{p}+\nu p')}|b|^{\frac{1}{p}+\nu p'})^{\frac{1}{p}\frac{1}{\frac{1}{p}+\nu p'}}(M|b|^{\frac{1}{p}+\nu p'})^{1-\frac{1}{p}\frac{1}{\frac{1}{p}+\nu p'}} \\
& \leq C\||b|^{\frac{1}{\alpha-1}}\|_{E_{\frac{1}{p}+\nu p'}}^{\frac{\alpha-1}{p}} (\hat{M}|b|^{\frac{1}{p}+\nu p'})^{1-\frac{1}{p}\frac{1}{\frac{1}{p}+\nu p'}} \\
& \leq C\||b|^{\frac{1}{\alpha-1}}\|_{E_{q_0}}^{\frac{\alpha-1}{p}} (\hat{M}|b|^{\frac{1}{p}+\nu p'})^{1-\frac{1}{p}\frac{1}{\frac{1}{p}+\nu p'}}.
\end{align*}
In addition to \eqref{a1}, let us temporarily assume that
\begin{equation}
\label{a2}
\hat{M}|b|^{\frac{1}{p}+\nu p'} \leq C_0|b|^{\frac{1}{p}+\nu p'}.
\end{equation}
Then
$$
P_{(1-\frac{1}{\alpha})\frac{2}{p}} (|b|^{\frac{1}{p}+\nu p'})  \leq C_2 \||b|^{\frac{1}{\alpha-1}}\|_{E_{q_0}}^{\frac{\alpha-1}{p}} |b|^{\nu p'}.
$$

Step 3.\,Applying the results of Steps 1 and 2  in \eqref{PP}, we obtain
$$
\|P^\ast_{(1-\frac{1}{\alpha})\frac{2}{p}}(|b||u|^{p-1})\|_{p'} \leq C_3 \||b|^{\frac{1}{\alpha-1}}\|_{E_{q_0}}^{\frac{\alpha -1}{p}} \langle |b||u|^p\rangle^{\frac{1}{p'}}.
$$
Therefore, \eqref{ineq2} yields
$
\langle |b||u|^{p}\rangle \leq C_3 \||b|^{\frac{1}{\alpha-1}}\|_{E_{q_0}}^{\frac{\alpha-1}{p}} \langle |b||u|^p\rangle^{\frac{1}{p'}}\|f\|_p,
$
hence
\begin{equation}
\label{ineq4}
\langle |b||u|^{p}\rangle^{\frac{1}{p}} \leq C_3\||b|^{\frac{1}{\alpha-1}}\|_{E_{q_0}}^{\frac{\alpha-1}{p}} \|f\|_p.
\end{equation}

Step 4.\,Now, we get rid of the assumptions \eqref{a1} and \eqref{a2} at expense of replacing $\|b\|_{E_{q_0}}$ in \eqref{ineq4} by $\|b\|_{E_q}$, where, recall, $q_0<q$. This, in turn, will give us \eqref{req_ineq}.
Fix $q_0<q_1<q$ and define $$\tilde{b}:=(\hat{M} |b|^{q_1})^{\frac{1}{q_1}}.$$ Then $\tilde{b} \geq |b|$ and $\tilde{b}$ satisfies 
\begin{equation}
\label{a3}
\hat{M}\tilde{b}^{q_0} \leq C_0\tilde{b}^{q_0},
\end{equation}
see \cite[p.158]{GR}.
Since $1+\nu p < q_0$, $\frac{1}{p}+\nu p'<q_0$, both inequalities \eqref{a1} and \eqref{a2} for $\tilde{b}$ follow from \eqref{a3}, and so we have
$$
\langle |b||u|^{p}\rangle^{\frac{1}{p}} \leq C_3 \|\tilde{b}^{\frac{1}{\alpha-1}}\|_{E_{q_0}}^{\frac{\alpha-1}{p}} \|f\|_p.
$$
It remains to apply inequality $\||\tilde{b}|^{\frac{1}{\alpha-1}}\|_{E_{q_0}} \leq C \||b|^{\frac{1}{\alpha-1}}\|_{E_{q}}$, which was established in \cite[proof of Theorem 4.1]{Kr1} for $\alpha=2$. In fact, \cite{Kr1} gave two proofs of this fact, one is based on the Fefferman-Stein inequality, and the other one is more direct. 
In both cases, as is mentioned in \cite{Kr1}, transferring the corresponding harmonic-analytic results from cubes to $\hat{M}$ defined on the parabolic cylinders (with $\alpha=2$)  presents no problem. (Note that our parabolic cylinders with $1<\alpha<2$ get closer to the cubes as $\alpha \downarrow 1$. So, at this last step of the proof we find ourselves between \cite{Kr1} and the classical setting of cubes in $\mathbb{R}^{1+d}$.) \hfill \qed

\bigskip

\section{Estimates used in the proof of Proposition \ref{prop1_weight}}
\label{app_weights}

Let us prove: provided $\theta>d$, for every $\frac{\theta}{\alpha}<p<\infty$, for all $\lambda>1$,
\begin{equation}
\label{app_est1}
\| \rho^{\frac{1}{p}}(\lambda + \partial_t+
(-\Delta)^\frac{\alpha}{2}
)^{-\frac{1}{p}(1-\frac{1}{\alpha})}\rho^{-\frac{1}{p}}g\|_{L^p(\mathbb R^{1+d})} \leq \lambda^{-\frac{1}{p}(1-\frac{1}{\alpha})} \|g\|_{L^p(\mathbb R^{1+d})},
\end{equation}
\begin{equation}
\label{app_est2}
\| \rho^{-\frac{1}{p}}(\lambda + \partial_t+
(-\Delta)^\frac{\alpha}{2}
)^{-\frac{1}{p'}(1-\frac{1}{\alpha})}\rho^{\frac{1}{p}}g\|_{L^p(\mathbb R^{1+d})} \leq \lambda^{-\frac{1}{p'}(1-\frac{1}{\alpha})} \|g\|_{L^p(\mathbb R^{1+d})}.
\end{equation}
\begin{proof}
Let us prove \eqref{app_est1}. Write for brevity $\beta:=\frac{1}{p}(1-\frac{1}{\alpha})$,
$A:=(-\Delta)^\frac{\alpha}{2}$, $\partial:=\partial_t$.  We have
\begin{align*}
\| \rho^{\frac{1}{p}}(\lambda + \partial+A)^{-\beta}\rho^{-\frac{1}{p}}g\|_{L^p(\mathbb R^{1+d})} & = \| \rho^{\frac{1}{p}}\int_0^\infty e^{-\lambda s}s^{\beta-1}e^{-sA}\rho^{-\frac{1}{p}}g(t-s,\cdot)ds\|_{L^p(\mathbb R^{1+d})}\\
& \leq \| \int_0^\infty e^{-\lambda s}s^{\beta-1}\|\rho^{\frac{1}{p}}e^{-sA}\rho^{-\frac{1}{p}}g(t-s,\cdot)\|_{L^p(\mathbb R^d)}ds\|_{L^p(\mathbb R,dt)}.
\end{align*}
For every $s \geq 0$ (denoting by $p_s(z)$ the integral kernel of $e^{-sA}$),
\begin{align}
\|\rho^{\frac{1}{p}}e^{-sA}\rho^{-\frac{1}{p}}g(t-s,\cdot)\|_{L^p(\mathbb R^d)} & =\big\langle \rho(x) \big|\langle p_s(z)\rho^{-\frac{1}{p}}(x-z)g(t-s,x-z)\rangle_z\big|^p\big\rangle^{\frac{1}{p}}_x \notag \\
& (\text{use $\rho^{-\frac{1}{p}}(x-z) \leq c\rho^{-\frac{1}{p}}(x)\rho^{-\frac{1}{p}}(z)$})\label{line1} \\
& \leq c\big\langle  \big|\langle p_s(z)\rho^{-\frac{1}{p}}(z)g(t-s,x-z)\rangle_z\big|^p\big\rangle^{\frac{1}{p}}_x \notag  \\
& (\text{apply Young's inequality}) \notag  \\
& \leq c\|p_s(\cdot) (1+\kappa 
|\cdot|^2
)^{\frac{\theta}{2p}}\|_{L^1(\mathbb R^d)}\|g(t-s,\cdot)\|_{L^p(\mathbb R^d)} \qquad (\text{we use $\frac{\theta}{p}<\alpha$}) \notag \\
& \leq C (1+
s^{\frac{\theta}{\alpha p}
}
)\|g(t-s,\cdot)\|_{L^p(\mathbb R^d)}. \notag 
\end{align}
We now return to the first chain of inequalities, substitute there the last estimate, and apply Young's inequality in the time variable ($1+\frac{1}{p}=\frac{1}{1}+\frac{1}{p}$):
\begin{align*}
\| \rho^{\frac{1}{p}}(\lambda + \partial_t+A)^{-\frac{1}{p}(1-\frac{1}{\alpha})}\rho^{-\frac{1}{p}}g\|_{L^p(\mathbb R^{1+d})} & \leq C \int_0^\infty e^{-\lambda s}s^{\beta-1}(1+
s^{\frac{\theta}{\alpha p}
}
)ds\,\|g\|_{L^p(\mathbb R^{1+d})} \\
& \leq C_1 \lambda^{-\beta}\|g\|_{L^p(\mathbb R^{1+d})}, \quad \lambda>1.
\end{align*}
This ends the proof of \eqref{app_est1}.

The proof of \eqref{app_est2} is the same, except that in the counterpart of line \eqref{line1} we need to use inequality $(1+|x-z|^2)^{-\frac{\theta}{2p}} \leq c
 (1+|x|^2)^{-\frac{\theta}{2p}}(1+|z|^2)^{\frac{\theta}{2p}}
$
, i.e.\,$\rho^{\frac{1}{p}}(x-z) \leq c\rho^{\frac{1}{p}}(x)\rho^{-\frac{1}{p}}(z)$.
\end{proof}

\bigskip

\section{Counterexample to weak existence}

\label{counter_app}

Below we provide a detailed justification of the counterexample to weak existence (Example \ref{counter_ex}). In fact, it only remains for us to prove identity \eqref{ito_cor} assuming that there exists a weak solution $(X_t,Z_t)$ to SDE \eqref{sde1} with drift $b(x)=\kappa x|x|^{-\alpha}$, $\kappa>0$, departing from the origin at time $t=0$.

\begin{lemma}
\eqref{ito_cor} holds.
\end{lemma}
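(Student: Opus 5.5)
The plan is to justify \eqref{ito_cor} by smoothing $|x|^\beta$ near the origin, applying It\^{o}'s formula \eqref{ito} to the smooth approximants, and passing to the limit. For $\varepsilon>0$ put
$$
f_\varepsilon(x):=(\varepsilon^2+|x|^2)^{\frac{\beta}{2}}.
$$
This function is $C^\infty$. However, it grows at infinity, so we also multiply by a cutoff $\chi_R$ that equals $1$ on $B_{2R}$ and is supported in $B_{3R}$. We then work up to the stopping times $\tau_R:=\inf\{t\ge0: |X_t|\ge R\}$ and $\sigma_m:=\inf\{t\ge0:\int_0^t|b(X_s)|ds\ge m\}$; the second one makes the drift term integrable. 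Since $\int_0^T|b(X_s)|ds<\infty$ a.s. by the definition of weak solution, we have $\sigma_m\uparrow\infty$ a.s. Applying \eqref{ito} to $\chi_R f_\varepsilon$, stopped at $t\wedge\tau_R\wedge\sigma_m$, the compensated jump martingale has zero mean because the integrand is bounded. This gives
$$
\mathbb E\,(\chi_Rf_\varepsilon)(X_{t\wedge\tau_R\wedge\sigma_m})=\varepsilon^\beta+\mathbb E\int_0^{t\wedge\tau_R\wedge\sigma_m}\bigl(-(-\Delta)^{\frac\alpha2}(\chi_Rf_\varepsilon)-b\cdot\nabla f_\varepsilon\bigr)(X_s)\,ds.
$$
On the left-hand side, $\chi_R=1$ at the stopped point except possibly at the exit jump. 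This is handled by letting the cutoff radius tend to infinity for fixed $R$, or by simply keeping $\chi_R$ and noting the left side stays nonnegative, which is all the contradiction needs.

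Next, I compute the integrand pointwise on $B_R$. The drift term is explicit:
$$
b\cdot\nabla f_\varepsilon=\kappa\beta\,|x|^{2-\alpha}(\varepsilon^2+|x|^2)^{\frac\beta2-1}.
$$
It is nonnegative, it increases monotonically to $\kappa\beta|x|^{\beta-\alpha}$ as $\varepsilon\downarrow0$, and it is bounded by $\kappa\beta|x|^{\beta-\alpha}$. For the fractional Laplacian I use the L\'evy representation \eqref{levy}. On $B_R$ the contribution of $1-\chi_R$ is a bounded function, $O(R^{\beta-\alpha})$, uniformly in $\varepsilon$. For $f_\varepsilon$ itself, scaling gives
$$
-(-\Delta)^{\frac\alpha2}f_\varepsilon(x)=\varepsilon^{\beta-\alpha}g(x/\varepsilon),\qquad g:=-(-\Delta)^{\frac\alpha2}(1+|\cdot|^2)^{\frac\beta2}.
$$
The function $g$ is continuous and satisfies $g(y)\sim c_{d,\alpha,\beta}|y|^{\beta-\alpha}$ as $|y|\to\infty$. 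Together these imply a uniform bound $|(-\Delta)^{\frac\alpha2}f_\varepsilon(x)|\le C(1+|x|^{\beta-\alpha})$, and pointwise convergence to $-c_{d,\alpha,\beta}|x|^{\beta-\alpha}$ for every $x\neq0$.

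Now I pass to the limit $\varepsilon\downarrow0$. The left-hand side converges by bounded convergence. For the right-hand side, dominated convergence needs $\mathbb E\int_0^{t}|X_s|^{\beta-\alpha}ds<\infty$. This is the main obstacle, since we have no Krylov bound for an arbitrary weak solution. However, $|x|^{\beta-\alpha}\le 1+|x|^{1-\alpha}$ because $\beta>1$, and $|b(x)|=\kappa|x|^{1-\alpha}$. Hence, up to the stopping time $\sigma_m$, the integral $\int|X_s|^{\beta-\alpha}ds$ is at most $t+m/\kappa$, which is integrable. This is precisely why $\sigma_m$ is introduced.

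There is one remaining subtlety: the set of times $\{s: X_s=0\}$, where the pointwise limit is undefined. On that set $|b(X_s)|=\infty$ if we extend $b$ by $+\infty$ at the origin, so it is null for $ds$ almost surely, again because $\int|b(X_s)|ds<\infty$. If instead $b(0)$ is defined as $0$, I would need a separate argument. In that case I would show that the occupation time of $\{0\}$ vanishes, using the fact that $X$ is a semimartingale with a nondegenerate jump part; the contribution of $\{X_s=0\}$ has the favourable sign anyway. Granting this, the limit yields \eqref{ito_cor} with an additional harmless $O(R^{\beta-\alpha})$ term from the cutoff, which does not affect the sign argument of Example \ref{counter_ex}, or which disappears as $R\to\infty$ after the sign argument has been made.
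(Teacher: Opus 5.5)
Your strategy is the paper's: the same $f_\varepsilon$, the same stopping times $\tau_R$ and $\sigma_m$, and dominated convergence in $\varepsilon$ driven by $\sigma_m$. Your global bound $|x|^{\beta-\alpha}\le 1+|x|^{1-\alpha}$ is a slightly cleaner substitute for the paper's $R^{\beta-1}|x|^{1-\alpha}$ on $\{s\le\tau_R\}$. However, as you finally carry it out, you do not prove \eqref{ito_cor}. Because your cutoff $\chi_R$ is tied to the exit radius, the limit $\varepsilon\downarrow 0$ gives
\[
\mathbb E\,(\chi_R|\cdot|^\beta)(X_{t\wedge\tau_R\wedge\sigma_m})=\mathbb E\int_0^{t\wedge\tau_R\wedge\sigma_m}\Big((c_{d,\alpha,\beta}-\kappa\beta)|X_s|^{\beta-\alpha}-E_R(X_s)\Big)\,ds .
\]
Here $E_R(x):=\int_{\mathbb R^d}\big((1-\chi_R)|\cdot|^\beta\big)(x+z)\,\nu(dz)\ge 0$ with $\nu(dz)=c_{d,\alpha}|z|^{-d-\alpha}dz$, and $E_R$ is of order $R^{\beta-\alpha}$ on $B_R$. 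For fixed $R$ neither the cutoff on the left nor $E_R$ goes away, and sending $R\to\infty$ also moves $\tau_R$, so the identity at fixed $R$ is never reached. This variant does suffice for Example~\ref{counter_ex}, since $E_R\ge0$ only helps, but it is not the lemma.

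The missing step is the one you mention only in passing: take a cutoff $\chi_n$ independent of $R$ and let $n\to\infty$ at fixed $R$, as the paper does. The correction on $B_R$ is then $O(n^{\beta-\alpha})$ uniformly. But the left-hand side now requires integrability of $|X_{t\wedge\tau_R\wedge\sigma_m}|^\beta$, i.e.\ control of the overshoot at the exit jump. The subsequent limit $\varepsilon\downarrow0$ on the left is then no longer a bounded-convergence argument. The paper obtains both from the SDE: $|X_{t\wedge\tau_R\wedge\sigma_m}|\le \kappa m+\sup_{s\le t}|Z_s|$ (just $m$ with your $\sigma_m$), and $\mathbb E\sup_{s\le t}|Z_s|^\beta<\infty$ because $\beta<\alpha$. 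Alternatively, use Fatou together with finiteness of the right-hand side, then monotone convergence along $f_\varepsilon\downarrow|x|^\beta$.

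Your asymptotics $g(y)\sim c_{d,\alpha,\beta}|y|^{\beta-\alpha}$ is, by the same scaling, equivalent to $-(-\Delta)^{\frac{\alpha}{2}}f_\varepsilon(e)\to c_{d,\alpha,\beta}$ for $|e|=1$. So it still has to be proved, e.g.\ by the paper's three-region dominated convergence in the L\'evy representation. What scaling buys you is the uniform bound $C|x|^{\beta-\alpha}$ at no extra cost.

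Your fallback for the convention $b(0)=0$ contains a wrong claim: the contribution of $\{X_s=0\}$ has the \emph{unfavourable} sign. Since $\nabla f_\varepsilon(0)=0$, the drift contributes nothing there. Meanwhile $-(-\Delta)^{\frac{\alpha}{2}}f_\varepsilon(0)=\varepsilon^{\beta-\alpha}g(0)$ with $g(0)=\int_{\mathbb R^d}\big((1+|z|^2)^{\frac{\beta}{2}}-1\big)\nu(dz)>0$, which tends to $+\infty$. Also, a ``nondegenerate jump part'' alone does not rule out positive occupation time at a point; a compound Poisson driver is a counterexample.

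The correct way out uses exactly this sign. In the regularized identity (\eqref{temp_id}, or your cutoff version), the left-hand side and the contribution of $\{X_s\neq0\}$ are bounded uniformly in $\varepsilon\in(0,1]$ by your $\sigma_m$-domination. Hence $\varepsilon^{\beta-\alpha}g(0)\,\mathbb E\int_0^{t\wedge\tau_R\wedge\sigma_m}\mathbf 1_{\{X_s=0\}}\,ds$ stays bounded, which forces the occupation time of the origin to vanish. The paper sidesteps the issue by reading $|b(X_s)|=\kappa|X_s|^{1-\alpha}$, infinite at the origin, in the definition of weak solution; that is your first case.
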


\begin{proof}
Set
$$
f(x)=|x|^\beta,\qquad \beta\in(1,\alpha).
$$
Since $f$ is not $C^2$ at the origin, we regularize it by
$$
f_\varepsilon(x):=(|x|^2+\varepsilon^2)^{\frac{\beta}{2}},\qquad \varepsilon>0.
$$
Then $f_\varepsilon\in C^2(\mathbb R^d)$. Let
$$
\tau_R:=\inf\{t\ge0:\ |X_t|\ge R\},\qquad R>0
$$
and
$$
\sigma_m
:=
\inf\left\{
u\ge0:
\int_0^u |X_s|^{1-\alpha}\,ds\ge m
\right\},
\qquad m\ge1.
$$
Here, $
\sigma_m\uparrow\infty
$
a.s., 
because for every $t>0$,
$$
\int_0^t |X_s|^{1-\alpha}\,ds<\infty
\qquad \text{a.s.}
$$

Applying It\^o's formula \eqref{ito} to
$
\chi_n f_\varepsilon,
$
where $\chi_n\in C_c^\infty(\mathbb R^d)$ satisfies
$
0\le \chi_n\le1$,
$\chi_n=1$ on  $B_n(0),$
(so that $\chi_n f_\varepsilon$ is bounded and has bounded derivatives), 
 and then letting \(n\to\infty\), we obtain
\begin{equation}
\label{temp_id}
\mathbb E f_\varepsilon(X_{t\wedge\tau_R\wedge\sigma_m})
=
f_\varepsilon(0)
+
\mathbb E\int_0^{t\wedge\tau_R\wedge\sigma_m}
\Big(A f_\varepsilon(X_s)-b(X_s)\cdot \nabla f_\varepsilon(X_s)\Big)\,ds,
\end{equation}
where, throughout this appendix,
$$
A:=-(-\Delta)^{\frac{\alpha}{2}}.
$$
We now justify the passage to the limit $\varepsilon\downarrow 0$ in \eqref{temp_id}, in order to obtain the sought identity \eqref{ito_cor}. 

\medskip

Step 1. Convergence of the left-hand side of \eqref{temp_id}. 
Using the SDE, we find
\[
X_{t\wedge\tau_R\wedge\sigma_m}
=
-\int_0^{t\wedge\tau_R\wedge\sigma_m} b(X_s)\,ds+Z_{t\wedge\tau_R\wedge\sigma_m}.
\]
Moreover, by the definition of \(\sigma_m\),
\[
\int_0^{t\wedge\tau_R\wedge\sigma_m} |b(X_s)|\,ds
=
\kappa\int_0^{t\wedge\tau_R\wedge\sigma_m} |X_s|^{1-\alpha}\,ds
\le \kappa m.
\]
Hence
\[
|X_{t\wedge\tau_R\wedge\sigma_m}|
\le
\kappa m+\sup_{0\le s\le t}|Z_s|.
\]
Since \(\beta<\alpha\), we have
\[
\mathbb E\sup_{0\le s\le t}|Z_s|^\beta<\infty.
\]
Thus \(f_\varepsilon(X_{t\wedge\tau_R\wedge\sigma_m})\) is dominated by an integrable random variable independent of \(\varepsilon\), and therefore
\[
\mathbb E f_\varepsilon(X_{t\wedge\tau_R\wedge\sigma_m})
\to
\mathbb E |X_{t\wedge\tau_R\wedge\sigma_m}|^\beta.
\]

Step 2. Convergence of the right-hand side of \eqref{temp_id}. 
Clearly, $f_\varepsilon(0)\to0$. We next pass to the limit in the drift term and the fractional Laplacian term:

1)
For the drift term, we have
\[
b(x)\cdot\nabla f_\varepsilon(x)
=
\kappa\beta
(|x|^2+\varepsilon^2)^{\frac{\beta}{2}-1}
|x|^{2-\alpha},
\]
and therefore
\[
|b(x)\cdot\nabla f_\varepsilon(x)|
\le
\kappa\beta |x|^{\beta-\alpha},
\qquad x\neq0.
\]

On \(\{s\le \tau_R\}\),
\[
|X_s|^{\beta-\alpha}
=
|X_s|^{1-\alpha}|X_s|^{\beta-1}
\le
R^{\beta-1}|X_s|^{1-\alpha},
\]
while
\[
\int_0^{t\wedge\tau_R\wedge\sigma_m}
|X_s|^{\beta-\alpha}\,ds
\le
R^{\beta-1}m
\quad \text{a.s.}.
\]
Therefore, by the Dominated convergence theorem,
\[
\mathbb E
\int_0^{t\wedge\tau_R\wedge\sigma_m}
b(X_s)\cdot\nabla f_\varepsilon(X_s)\,ds
\to
\kappa\beta\,
\mathbb E
\int_0^{t\wedge\tau_R\wedge\sigma_m}
|X_s|^{\beta-\alpha}\,ds.
\]

2) For the fractional Laplacian term, we first show that 
\begin{align}
\label{fraclplacian_limit}
A f_\varepsilon(x)\to A|x|^\beta
=
c_{d,\alpha,\beta}|x|^{\beta-\alpha},
\qquad x\neq0.
\end{align}
Indeed, recall the L\'evy representation for the fractional Laplacian,
$$
A f(x)
=
\int_{\mathbb R^d}
\Big(f(x+z)-f(x)-\mathbf 1_{\{|z|\le1\}}\nabla f(x)\cdot z\Big)\nu(dz)
\quad \text{with}\  \nu(dz)=c_{d,\alpha}|z|^{-d-\alpha}\,dz.
$$
Define
$$
F_\varepsilon(x,z)
:=
f_\varepsilon(x+z)-f_\varepsilon(x)-\mathbf 1_{\{|z|\le1\}}\nabla f_\varepsilon(x)\cdot z.
$$
Fix $x\neq0$. We estimate \(F_\varepsilon(x,z)\)
separately on three regions:

For $|z|\le |x|/2$, we have $|x+z|\ge \frac{|x|}{2}$, and so
\begin{align}\label{F_bound1}
|F_\varepsilon(x,z)|
\le C|z|^2
\sup_{0\le\theta\le1}
|D^2f_\varepsilon(x+\theta z)|
\le C |x|^{\beta-2} |z|^2.
\end{align}

For $|x|/2<|z|\le1$, using \(|\nabla f_\varepsilon(x)|\le C|x|^{\beta-1}\)
and the Mean value theorem, we have
\[
|f_\varepsilon(x+z)-f_\varepsilon(x)|
\le C(|z|^\beta+|x|^{\beta-1}|z|),
\]
and then 
\begin{equation}
\label{F_bound12}
|F_\varepsilon(x,z)|
\le C(|z|^\beta+|x|^{\beta-1}|z|).
\end{equation}

For $|z|>1$, since
\begin{align}
|f_\varepsilon(x)|\le C(1+|x|^\beta),
\end{align}
we have
\begin{align}\label{F_bound2}
|F_\varepsilon(x,z)|
\le C(|x|^\beta+|z|^\beta).
\end{align}
Since $\alpha<2$ and $\beta<\alpha$, these bounds are
$\nu$-integrable on their corresponding regions. Therefore, by the Dominated convergence theorem,
we conclude \eqref{fraclplacian_limit}.

Next, we show that
\begin{align}\label{rhd_term2}
\mathbb E\int_0^{t\wedge\tau_R\wedge\sigma_m} A f_\varepsilon(X_s)\,ds
\to
c_{d,\alpha,\beta}\mathbb E\int_0^{t\wedge\tau_R\wedge\sigma_m}|X_s|^{\beta-\alpha}\,ds.
\end{align}
Using \eqref{F_bound1}-\eqref{F_bound2}, we have
\begin{align*}
	|A f_\varepsilon(x)|
	&\le \left(
	\int_{|z|\le |x|/2} +\int_{|x|/2<|z|\le1} +\int_{|z|>1}
	\right) |F_{\varepsilon}(x,z)| \nu(dz)\\
	&\le C (|x|^{\beta-\alpha}+|x|^\beta+1),
\end{align*}
while on $\{0<|x|\le R\}$ we have
\[
|A f_\varepsilon(x)|
\le C_R\big(1+|x|^{\beta-\alpha}\big).
\]
Therefore, on \(\{s\le\tau_R\}\),
\[
|A f_\varepsilon(X_s)|
\le C_R(1+|X_s|^{\beta-\alpha})
\le C_R(1+R^{\beta-1}|X_s|^{1-\alpha}),
\]
hence,
\[
\int_0^{t\wedge\tau_R\wedge\sigma_m}
|A f_\varepsilon(X_s)|\,ds
\le
C_R(t+
R^{\beta-1}m)
\quad \text{a.s.}.
\]
Therefore, by the Dominated convergence theorem, we conclude \eqref{rhd_term2}.

\medskip

Step 3.~Armed with the convergence results of Steps 1 and 2, we now pass to the limit $\varepsilon \downarrow 0$ in \eqref{temp_id} to obtain \eqref{ito_cor}.
\end{proof}

\end{document}